\providecommand{\tabularnewline}{\\}
\providecommand{\algorithmname}{Algorithm}
\theoremstyle{remark}
\theoremstyle{plain}
\newtheorem{thm}{\protect\theoremname}[section]
\theoremstyle{definition}
\newtheorem{defn}{\protect\definitionname}[section]
\theoremstyle{plain}
\newtheorem{lem}{\protect\lemmaname}[section]
\theoremstyle{plain}
\newtheorem{assumption}{\protect\assumptionname}
\theoremstyle{plain}
\newtheorem{fact}{\protect\factname}[section]
\providecommand{\assumptionname}{Assumption}
\providecommand{\definitionname}{Definition}
\providecommand{\factname}{Fact}
\providecommand{\lemmaname}{Lemma}
\providecommand{\theoremname}{Theorem}
\providecommand{\remarkname}{Remark}
\newtheorem{invariant}{Invariant}
\begin{document}
\global\long\def\cE{{\cal E}}%
\global\long\def\opt{{\cal OPT}}%
\global\long\def\res{\mathrm{res}}%
\global\long\def\ressolver{\mathrm{ResidualSolver}}%
\global\long\def\subsolver{\mathrm{SubSolver}}%
\global\long\def\R{\mathbb{\mathbb{R}}}%
\global\long\def\one{\mathds{1}}%
\global\long\def\diag{\mathrm{diag}}%
\renewcommand{\epsilon}{\varepsilon}

\author{Alina Ene\thanks{
Department of Computer Science,Boston University, \texttt{aene@bu.edu}}
 \and
 Ta Duy Nguyen\thanks{Department of Computer Science, Boston University, \texttt{taduy@bu.edu}} 
 \and
Adrian Vladu\thanks{CNRS \& IRIF, Universit\'e Paris Cit\'e, \texttt{vladu@irif.fr}}
}
\title{Quasi-Self-Concordant Optimization with  Lewis Weights}
\date{}
\maketitle
\begin{abstract}
In this paper, we study the problem $\min_{x\in\R^{d},Nx=v}\sum_{i=1}^{n}f((Ax-b)_{i})$
for a quasi-self-concordant function $f:\R\to\R$, where $A,N$ are
$n\times d$ and $m\times d$ matrices, $b,v$ are vectors of length
$n$ and $m$ with $n\ge d.$ We show an algorithm based on a trust-region
method with an oracle that can be implemented using $\widetilde{O}(d^{1/3})$
linear system solves, improving the $\widetilde{O}(n^{1/3})$ oracle
by {[}Adil-Bullins-Sachdeva, NeurIPS 2021{]}. Our implementation of
the oracle relies on solving the overdetermined $\ell_{\infty}$-regression
problem $\min_{x\in\R^{d},Nx=v}\|Ax-b\|_{\infty}$. We provide an
algorithm that finds a $(1+\epsilon)$-approximate solution to this
problem using $O((d^{1/3}/\epsilon+1/\epsilon^{2})\log(n/\epsilon))$
linear system solves. This algorithm leverages $\ell_{\infty}$ Lewis
weight overestimates and achieves this iteration complexity via a
simple lightweight IRLS approach, inspired by the work of {[}Ene-Vladu,
ICML 2019{]}. Experimentally, we demonstrate that our algorithm significantly
improves the runtime of the standard CVX solver.
\end{abstract}

\section{Introduction}

Quasi-self-concordant (QSC) optimization encompasses a broad class
of problems that have long been central to convex optimization, numerical
analysis, robust statistics, data fitting, and constrained optimization
\cite{bach2010self,karimireddy2018global,marteau2019globally,sun2019generalized,carmon2020acceleration,carmon2022optimal,doikov2023minimizing}.
Notable special cases include logistic regression, softmax regression,
and regularized $\ell_{p}$ regression, which have been widely studied
due to their broad range of applications in machine learning.

Due to their favorable properties, QSC functions have enjoyed efficient
optimization methods which require strictly weaker curvature assumptions
than, for example, strong convexity. Yet, without strong promises
about the underlying model, QSC optimization is still a challenging
problem for which first order methods only provide low precision solution
with $\mathrm{poly}(1/\epsilon)$ convergence rate as opposed to the
desirable $\mathrm{poly}\log(1/\epsilon)$ rate, where $\epsilon$
is the sub-optimality gap, while typical second order methods are
computationally expensive.  Thus significant challenges remain for
the pursuit of more efficient optimization algorithms for this class
of functions. 

In this work, we study the constrained QSC optimization problem
\begin{align}
\min_{x\in\R^{d},Nx=v} & \ h\left(x\right):=\sum_{i=1}^{n}f\left(\left(Ax-b\right)_{i}\right)\,,\label{eq:problem}
\end{align}
where $f:\R\to\R$ is a QSC function, $A\in\mathbb{R}^{n\times d},N\in\mathbb{R}^{m\times d}$
are matrices, and $b\in\mathbb{R}^{n}$, $v\in\mathbb{R}^{m}$ are
vectors. Each term $f\left(\left(Ax-b\right)_{i}\right)$ can be viewed
as measuring the fit to a single datapoint. We focus in particular
on the overdetermined regime where the number of datapoints $n$ is
significantly larger than the number of variables $d$. This setting
is central to large-scale data analysis. In regression tasks, standard
techniques for managing large $n$ include data sparsification \cite{spielman2008graph,cohen2015uniform},
subspace embeddings \cite{nelson2013osnap}, and tools from randomized
numerical linear algebra \cite{drineas2018lectures}. However, for
QSC objectives, existing approaches typically exhibit iteration complexity
that scales with $n$ rather than the intrinsic dimension $d$, which
makes them poorly suited to modern high-sample regimes. Thus, the
central question to our work is:
\begin{center}
\textit{Can we design an algorithm for Problem (\ref{eq:problem})
with iteration complexity nearly independent of $n$?}
\par\end{center}

We address this question by showing that QSC functions can be minimized
to high precision ($\mathrm{poly}\log(1/\epsilon)$ convergence rate)
in $\widetilde{O}(d^{1/3})$ iterations, each of which makes a constant
number of calls to a linear system solver involving structured $d\times d$
matrices, thereby drastically improving efficiency and scalability
of existing approaches. Our approach is based on a trust-region method,
for which we show a fast oracle implementation. Our oracle implementation
relies on solving $\ell_{\infty}$ regression problems. Instead of
leveraging data sparsification to reduce problem size \cite{cohen2015lp,jambulapati2023sparsifying,jambulapati2024sparsifying,woodruff2024nearly},
which often entails high computational costs when $n$ large, our
algorithm is inspired by the recent advances on general $\ell_{p}$
regression, such as the work of Jambulapati-Liu-Sidford \cite{jambulapati2022improved}.
We introduce a new approach that leverages $\ell_{\infty}$ Lewis
weights inside a simple, lightweight Iteratively Reweighted Least
Squares (IRLS) algorithm \cite{ene2019improved}, which is of independent
interest. A simple modification of our algorithm can be used to solve
QSC optimization problems in the underdetermined regime $n\leq d$
as well, and it provides a much simpler and practical algorithm with
theoretical guarantees that mirror those of the state of the art algorithm
of \cite{adil2021unifying} that currently lacks a practical implementation.

\subsection{Our contributions}

 We consider the QSC optimization problem (\ref{eq:problem}), where
$f$ is a general $C$-QSC function (Definition \ref{def:qsc}), potentially
non-smooth and not strongly convex. Following the literature on regression
algorithms \cite{adil2019fast,adil2019iterative,jambulapati2022improved}
and the prior work by \cite{adil2021unifying}, we focus on designing
highly efficient iterative algorithms that rely on (structured) linear
system solvers. Throughout this paper, we define one iteration as
a single call to the linear system solver. Since the linear system
solves dominate the running time in all algorithms we discuss, the
total runtime is essentially the product of the iteration count and
the cost of solving these systems.

Our first main contribution is a new trust-region method for affine-constrained
$C$-QSC optimization that achieves the following performance guarantee.
\begin{thm}
\label{thm:QSC-main-theorem} Let $f:\R\to\R$ be a $C$-quasi-self-concordant
function, let $A\in\R^{n\times d}$, $N\in\R^{m\times d}$, $b\in\R^{n}$
and $v\in\R^{m}$ with $n>d$. Define the function $h:\R^{d}\to\R$
as $h\left(x\right)=\sum_{i=1}^{n}f\left(\left(Ax-b\right)_{i}\right).$
Let $x^{(0)}$ be an initial solution satisfying $Nx^{(0)}=v$, and
let $R$ be any value such that $\max_{x\in\left\{ x:h\left(x\right)\le h\left(x^{(0)}\right)\right\} }\left\Vert Ax-Ax^{*}\right\Vert _{\infty}\le R$.
Suppose $h$ is bounded from below, and let $B$ be any value satisfying
$h(x)\geq B$ for all $x\in\R^{d}$. Then there exists an algorithm
that, given $x^{(0)}$, $R$, and $B$ as input, it computes an $\epsilon$-additive
approximation to the problem $\min_{x\in\R^{d},Nx=v}h(x)$ by solving
$O\left(CR\log\left(CR\right)\log^{2}\left(\frac{h(x^{(0)})-h(x^{*})}{\epsilon}\right)\right)$
subproblems, each of which makes $O\left(d^{1/3}\log n\right)$ calls
to a linear system solver with matrices of the form $A^{\top}DA$
and $N\left(A^{\top}DA\right)^{+}N^{\top}$, where $D$ is a positive
diagonal matrix. %
\end{thm}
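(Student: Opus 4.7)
The plan is to combine a trust-region outer loop with a fast implementation of the trust-region subproblem via the $\ell_\infty$-regression routine developed elsewhere in the paper. The driving intuition is that a $C$-quasi-self-concordant function looks approximately quadratic on any set of the form $\{\Delta:\|A\Delta\|_\infty\le r\}$ with $r\lesssim 1/C$: the QSC inequality implies that on such a set the Hessian $A^\top\diag(f''((A(x+\Delta)-b)_i))A$ agrees with the Hessian at $x$ up to a multiplicative factor $e^{O(Cr)}$, so the local second-order Taylor model provides a tight sandwich for $h$. Minimising this model over the trust region and iterating yields a classical trust-region scheme, and the complexity analysis reduces to (i) bounding how many steps of radius $1/C$ are needed to traverse the $\ell_\infty$ diameter $R$ of the level set, and (ii) realising each subproblem cheaply.

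First I would formalise the model. Writing $g^{(k)}=\nabla h(x^{(k)})$ and $H^{(k)}=A^\top D^{(k)}A$ with $D^{(k)}=\diag(f''((Ax^{(k)}-b)_i))$, the subproblem at the $k$-th outer iteration is
\[
  \min_{\Delta\,:\,N\Delta=0,\ \|A\Delta\|_\infty\le r}\; \langle g^{(k)},\Delta\rangle + \tfrac{1}{2}\Delta^\top H^{(k)}\Delta .
\]
Via standard Lagrangian/rescaling manipulations this constrained quadratic is equivalent (up to constants) to a constrained $\ell_\infty$-regression instance $\min_{N\Delta=0}\|A\Delta-c\|_\infty$ for an explicit target vector $c$ built from $g^{(k)}, H^{(k)}, r$, which the paper's inner algorithm solves to $(1+\epsilon)$-accuracy in $O((d^{1/3}/\epsilon+1/\epsilon^2)\log(n/\epsilon))$ linear-system solves; setting $\epsilon$ to a small constant gives the advertised $O(d^{1/3}\log n)$ inner count. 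The linear systems that appear in this inner call are exactly those involving $A^\top DA$, and the affine constraint is enforced by projecting onto $\{\Delta:N\Delta=0\}$, which introduces Schur-complement systems of the form $N(A^\top DA)^+ N^\top$.

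For the outer analysis I would adapt the trust-region potential argument of Adil--Bullins--Sachdeva. A single step of trust-region radius $\Theta(1/C)$ inside the level set decreases the gap $h(x^{(k)})-h(x^*)$ by a multiplicative factor depending on the current gap divided by $R$; iterating this bound yields $O(CR)$ steps per halving of the gap. Two logarithmic factors then appear: one from a binary search over an unknown target value of $h(x^*)$, calibrated using the a-priori bracket $[B,h(x^{(0)})]$ to select the correct subproblem parameter $r$ at each step, and one from repeatedly halving the gap until additive error $\epsilon$ is reached. Multiplying these yields the $O(CR\log(CR)\log^{2}((h(x^{(0)})-h(x^*))/\epsilon))$ outer subproblem count, with each subproblem solved by the $\ell_\infty$-regression oracle in $O(d^{1/3}\log n)$ linear solves, matching the claim.

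The main obstacle I anticipate is propagating the $(1+\epsilon)$ approximation of the inner $\ell_\infty$-regression oracle through the trust-region analysis while preserving the QSC multiplicative progress: because the QSC sandwich is only valid when the step remains inside the $\ell_\infty$-ball, the approximate inner solution must be both genuinely feasible and of high enough quality to imply actual descent on $h$, not merely on the quadratic model. This forces a careful coupling between the inner accuracy and the trust-region radius, and a matching choice of how the subproblem is scaled so that the reduction to a pure $\ell_\infty$ regression stays tight. Handling the affine constraint $Nx=v$ cleanly — so that the per-iteration cost remains $d^{1/3}$ solves in the $d$-dimensional Schur-complement system rather than scaling with $m$ — will also require invoking a projected solver primitive rather than a naïve elimination.
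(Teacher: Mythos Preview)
Your outer-loop skeleton matches the paper: Hessian stability from the QSC property, a residual subproblem over $\{\|A\Delta\|_\infty\le 1/C\}$, multiplicative progress $1-\Theta(1/(CR))$ per step, and a binary search over the unknown residual value contributing the $\log(CR)$ and extra $\log((h(x^{(0)})-h(x^*))/\epsilon)$ factors. The handling of the affine constraint via Schur complements $N(A^\top DA)^+N^\top$ is also what the paper does.

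The gap is in the inner step. You assert that the trust-region subproblem
\[
\min_{N\Delta=0,\ \|A\Delta\|_\infty\le r}\ \langle g^{(k)},\Delta\rangle+\tfrac12\Delta^\top H^{(k)}\Delta,\qquad H^{(k)}=A^\top D^{(k)}A,
\]
reduces ``via standard Lagrangian/rescaling manipulations'' to a pure $\ell_\infty$ regression $\min\|A\Delta-c\|_\infty$. This reduction does not hold: the Hessian term is a \emph{weighted} $\ell_2$ energy $\sum_i D^{(k)}_{ii}(A\Delta)_i^2$ with non-uniform diagonal $D^{(k)}$, and no choice of target $c$ makes an $\ell_\infty$ norm reproduce such a quadratic. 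The paper identifies precisely this as the key difficulty. After guessing $M$ for the optimal residual value, the subproblem becomes the mixed $\ell_2^2+\ell_\infty^2$ regression
\[
\min_{g^\top\Delta=-1}\ \langle s,(A\Delta)^2\rangle+\tfrac{MC^2}{2}\|A\Delta\|_\infty^2,
\]
and the paper does \emph{not} invoke the $\ell_\infty$-regression algorithm as a black box. It builds a dedicated routine ($\ressolver$) whose iterates minimize $\langle p^{(t)},(A\Delta)^2\rangle$ with $p^{(t)}=2(\|w\|_1+d)s+\tfrac{MC^2}{2}r^{(t)}$, so the fixed Hessian weight $s$ is carried inside every least-squares solve, and then re-proves the energy-increase invariant and iteration bounds in the presence of this extra term. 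The outcome is only a constant-factor approximation to the mixed problem (which is all the outer loop needs), yielding the $O(d^{1/3}\log n)$ inner count. The obstacle you flag at the end---coupling inner accuracy with the trust-region radius---is secondary; the primary missing piece is that the claimed reduction to pure $\ell_\infty$ regression is false, and a separate solver for the quadratic-plus-$\ell_\infty$ subproblem must be designed and analyzed.
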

\textbf{Comparison to prior works}: Prior to our work, the algorithm
with state of the art iteration complexity for general (non-smooth)
QSC optimization was given by \cite{adil2021unifying}, which achieved
iteration complexity $O\left(n^{1/3}\log^{O(1)}\left(n\right)\cdot CR\log\left(CR\right)\log\left(\frac{h(x^{(0)})-h(x^{*})}{\epsilon}\right)\right)$.
Our algorithm improves this to a dependence on $d^{1/3}$, which is
a substantial improvement in the overdetermined regime $n\gg d$.
Moreover, the algorithm of \cite{adil2021unifying} is complex and
makes use of several parameters set to guarantee the theoretical runtime.
In practice, to obtain an efficient algorithm, these parameters need
to be carefully tuned and currently this algorithm of \cite{adil2021unifying}
lacks a practical implementation for this reason. On the other hand,
our algorithm is significantly simpler and implementable in practice,
with an excellent empirical performance compared to CVX.

Our algorithm builds on a trust-region based algorithmic framework
developed in prior work \cite{allen2017much,cohen2017matrix,carmon2020acceleration,adil2021unifying}.
An important property of QSC functions shown by \cite{karimireddy2018global}
is that the Hessian at a point $x$ is relatively stable in a region
around $x$. Prior works such as \cite{carmon2020acceleration,adil2021unifying}
build on this property and design algorithms that iteratively minimize
a local second-order approximation of the function in a region where
the Hessian is sufficiently stable. The key difficulty in implementing
this approach lies in designing efficient algorithms for solving the
resulting subproblems. One of our main contributions is a new efficient
subroutine for solving the resulting subproblems. We develop our subroutine
by first designing a novel and simple algorithm for solving $\ell_{\infty}$
regression, which is of independent interest. Our subroutine algorithm
then solves $\ell_{2}$-regularized $\ell_{\infty}$ regression problems,
enjoying the simplicity and efficiency of our $\ell_{\infty}$ regression
solver. The following theorem states the guarantees for our $\ell_{\infty}$
regression algorithm. 
\begin{thm}
\label{thm:Linf-regression}Let $A\in\R^{n\times d}$, $N\in\R^{m\times d}$,
$b\in\R^{n}$ and $v\in\R^{m}$, with $n>d$, and let $\epsilon>0$
be a scalar. There exists an algorithm which provides a $(1+\varepsilon)$-multiplicative
approximation to the $\ell_{\infty}$ regression problem $\min_{x\in\R^{d},Nx=v}\left\Vert Ax-b\right\Vert _{\infty}$
by solving $O\left(\log\frac{\log\left(n\right)}{\epsilon}\right)$
subproblems, each of which makes $O\left(\left(\frac{d^{1/3}}{\epsilon}+\frac{1}{\epsilon^{2}}\right)\log\frac{n}{\epsilon}\right)$
calls to a solver for structured linear systems involving matrices
of the form $A^{\top}DA$ and $N\left(A^{\top}DA\right)^{+}N^{\top}$,
where $D$ is a positive diagonal matrix. 
\end{thm}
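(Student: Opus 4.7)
The plan is to approximate the $\ell_\infty$ objective by a smooth $\ell_p$ surrogate for large $p$ and then to solve the resulting problem via an iteratively reweighted least squares (IRLS) scheme in the spirit of \cite{ene2019improved}, with $\ell_\infty$ Lewis weight overestimates driving the reweighting. The classical inequality $\|y\|_p \le \|y\|_\infty \le n^{1/p}\|y\|_p$ implies that setting $p = \Theta(\log(n)/\epsilon)$ reduces the $(1+\epsilon)$-approximate $\ell_\infty$ regression problem to a $(1+\Theta(\epsilon))$-approximate $\ell_p$ regression problem over the same affine subspace $\{x : Nx=v\}$, so it suffices to solve the latter.

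To obtain the stated outer factor of $O\bigl(\log(\log(n)/\epsilon)\bigr)$, I would use a homotopy on $p$: start at $p_0 = O(1)$ and double $p$ each phase until $p = \Theta(\log(n)/\epsilon)$, carrying the previous iterate as a warm start so that only a constant number of outer IRLS-style refinements is needed per phase. Inside a phase, at each iteration one forms the diagonal matrix $D = \diag\bigl(w_i\,|(Ax-b)_i|^{p-2} + \tau_i\bigr)$, where $w$ is an $\ell_\infty$ Lewis weight overestimate and the additive term $\tau_i$ regularizes rows with very small residuals, and then solves $\min_{Nx=v} (Ax-b)^\top D (Ax-b)$. Eliminating the affine constraint by its Lagrangian reduces this subproblem to a constant number of calls to the structured linear system solvers of the form $A^\top D A$ and $N(A^\top D A)^+ N^\top$ made available in the theorem.

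The convergence rate would be controlled by a potential function argument of the flavor of \cite{ene2019improved, jambulapati2022improved}. Since $\ell_\infty$ Lewis weights sum to $O(d)$ and equalize row contributions in an appropriate norm, a single IRLS step either makes multiplicative progress on an $\ell_p$-type potential or else triggers a width-reduction update that decreases the effective maximum residual; combining the two cases yields the $O\bigl((d^{1/3}/\epsilon + 1/\epsilon^2)\log(n/\epsilon)\bigr)$ iteration bound per phase, with the $d^{1/3}$ stemming from the standard $\ell_p$ acceleration argument at $p \gtrsim \log(n)/\epsilon$ and the additive $1/\epsilon^2$ absorbing the contribution of rows whose residual lies below the width threshold. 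The Lewis weight overestimates themselves can be obtained by iterating a fixed-point map of the form $w_i \leftarrow \sigma_i(W^{1-2/p}A)$ a constant number of times, contributing only a mild overhead in extra linear system solves.

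The hardest part will be tightening the width-reduction and potential-function bookkeeping so that warm starts across homotopy phases lose at most a constant factor in iterate quality, together with the argument that approximate (constant-multiplicative) Lewis weight overestimates already suffice to drive IRLS convergence at the claimed rate. A secondary but nontrivial obstacle is handling the affine constraint $Nx=v$ uniformly across all of these steps, so that the only linear-algebraic primitives invoked remain the two structured solves $A^\top D A$ and $N(A^\top DA)^+N^\top$ promised by the theorem.
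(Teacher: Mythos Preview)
Your plan takes a genuinely different route from the paper. The paper does \emph{not} pass through an $\ell_p$ surrogate or a homotopy in $p$. Instead it works directly on $\ell_\infty$ via the primal--dual energy framework: it performs a binary search on the optimal value $M$ of $\min_{g^\top x=-1}\|Ax\|_\infty$ on a $(1+\epsilon)$-grid of length $O(\log n/\epsilon)$, which is the source of the outer $O(\log(\log n/\epsilon))$ factor. For each guess $M$ the Subsolver maintains resistances $r^{(t)}$ initialized at $r^{(0)}=w+\tfrac{d}{n}$ (with $w$ an $\ell_\infty$ Lewis weight overestimate vector), solves the weighted least-squares problem $x^{(t)}=\arg\min_{g^\top x=-1}\langle r^{(t)},(Ax)^2\rangle$, and updates $r$ so as to maintain the invariant $\mathcal E(r^{(t+1)})-\mathcal E(r^{(t)})\ge M^2(\|r^{(t+1)}\|_1-\|r^{(t)}\|_1)$. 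The Lewis weights enter only through the initialization, which is exactly what makes the high-width energy-increase lemma (Lemma~\ref{lem:energy-lewis-weight}) applicable and yields the $d^{1/3}$ rather than $n^{1/3}$ in the high-width iteration count; the low-width regime uses the classical energy lemma and an averaging/width argument to get the $1/\epsilon^2$ term. The Subsolver returns either a primal $x$ with $\|Ax\|_\infty\le(1+\epsilon)M$ or a dual certificate $r$ with $\mathcal E(r)/\|r\|_1\ge (M/(1+\epsilon))^2$, which drives the binary search.

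Beyond the difference in route, your outline has a gap at the point where you extract the per-phase iteration bound. Known $\ell_p$-regression IRLS/width-reduction analyses carry an explicit factor of $p$ (or $p^{O(1)}$) in the iteration count; at $p=\Theta(\log n/\epsilon)$ this would inflate the $\epsilon$-dependence beyond $d^{1/3}/\epsilon+1/\epsilon^2$. You propose to absorb this via warm starts across the homotopy phases so that each phase needs only $O(1)$ outer refinements, but no argument is given for why a $(1+\Theta(\epsilon))$-approximate $\ell_{p}$ minimizer remains a comparably good starting point for $\ell_{2p}$, nor for why the potential does not degrade when $p$ doubles. Similarly, the reweighting rule $D=\diag(w_i|(Ax-b)_i|^{p-2}+\tau_i)$, with Lewis weights appearing multiplicatively at every step, does not match either the Ene--Vladu resistance update or the way \cite{jambulapati2022improved} uses Lewis weights; you would need a new potential-function analysis to justify the claimed rate from this update. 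As written, the mechanism that actually produces the $d^{1/3}$ (namely the Lewis-weight-based energy-increase bound applied to a single high-width coordinate update) is absent from your sketch.
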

Our algorithm for $\ell_{\infty}$ regression achieves state of the
art iteration complexity (in terms of dependence on the dimension
$d$) via an iteratively reweighted least squares (IRLS) approach
that solves a weighted least squares instance in each iteration, whose
solution is provided by a single linear system solve. IRLS algorithms
are favored in practice due to their simplicity and ease of implementation.
IRLS algorithms were first introduced in pioneering work from the
1960s \cite{lawson1961contributions,rice1968lawson}. Subsequently,
there has been significant interest in developing IRLS methods with
provable convergence guarantees \cite{StraszakV16,StraszakV16Soda,ene2019improved,adil2019fast}.

\textbf{Comparison to prior works}: Prior to our work, the state of
the art IRLS algorithm for $\ell_{\infty}$ regression is the algorithm
of \cite{ene2019improved}. The work \cite{jambulapati2022improved}
gives a non-IRLS algorithm for $\ell_{\infty}$ regression with the
state of the art iteration complexity $\widetilde{O}\left(d^{1/3}/\epsilon^{2/3}\right)$.
We highlight the differences between our algorithm and these works.
The algorithm of \cite{jambulapati2022improved} works by leveraging
$\ell_{\infty}$ Lewis weights, and uses a reduction to the Monteiro-Svaiter
accelerated gradient descent algorithm by \cite{carmon2020acceleration}.
Monteiro-Svaiter acceleration is a complex scheme, requiring to solve
an implicit equation in each iteration. While an implementation is
later provided in \cite{carmon2022optimal}, the practicality of Monteiro-Svaiter
acceleration remains an open question \cite{carmon2022optimal}. Our
$\ell_{\infty}$ regression algorithm also uses Lewis weights, but
from a completely different approach, based on an IRLS algorithm.
The starting point of our algorithm is the algorithmic framework by
\cite{ene2019improved}. However the algorithm by \cite{ene2019improved}
is very specific to the uniform initialization of the solution and
only achieves iteration complexity $\widetilde{O}\left(n^{1/3}/\epsilon^{2/3}\right)$.
The non-uniform initialization via $\ell_{\infty}$ Lewis weights
in our algorithm immediately breaks the analysis by \cite{ene2019improved}.
To overcome this barrier requires new insights. Our algorithm also
further improves the iteration complexity of \cite{ene2019improved}'s
algorithm to $\widetilde{O}\left(d^{1/3}/\epsilon+1/\epsilon^{2}\right)$
while retaining the simplicity and practicality of an IRLS method.

\textbf{Underdetermined regime $n\leq d$:} A simple modification
of our algorithm can be used in the regime $n\leq d$ and it obtains
a guarantee which is analogous to Theorem \ref{thm:QSC-main-theorem}
but with an iteration complexity that depends on the smaller dimension
$n$. Our algorithm matches the iteration complexity of the state
of the art algorithm of \cite{adil2021unifying} up to poly-logarithmic
factors, while being significantly simpler and more practical. As
discussed above, our algorithm is an IRLS based method where the main
computation in each iteration is a weighted least squares regression,
which is beneficial in practice.

\textbf{Further extensions:} It is worth noting that while we followed
the formulation from \cite{adil2021unifying}, our solver can handle
more general functions provided that their Hessian is stable within
any $\ell_{\infty}$ box of fixed radius. This allows, for example,
to minimize functions of the form $\sum f_{i}\left(\left(Ax-b\right)_{i}\right)$
in both the constrained and unconstrained settings, where each $f_{i}$
in the decomposition has a bounded third derivative. The same framework
applies to functions without the decomposable structure, provided
that a similar stability property holds.

\subsection{Related work}

The class of quasi-self-concordant (QSC) functions was introduced
in \cite{bach2010self}, extending the definition of standard self-concordant
functions to logistic regression models. As QSC functions encompass
various other important models, such as softmax and $\ell_{p}$ regressions,
they have been further explored in a series of influential works \cite{sun2019generalized,karimireddy2018global,carmon2020acceleration,adil2021unifying,doikov2023minimizing}.

Most of these studies rely on trust-region Newton methods, which inherently
introduce a dependence on the number of iterations dictated by the
shape and size of the trust region. Typically, these regions are $\ell_{2}$
balls centered at the current iterate, leading to convergence guarantees
expressed in terms of the $\ell_{2}$ diameter of the search region.
Notably, \cite{adil2021unifying} applied the techniques of \cite{allen2017much,cohen2017matrix}
to instead establish guarantees in terms of the $\ell_{\infty}$ diameter
of the search region by solving a sequence of second-order subproblems
with $\ell_{\infty}$ constraints. Our algorithm for QSC optimization
follows a similar principle.

A significant contribution by \cite{doikov2023minimizing} introduced
acceleration techniques to achieve an iteration complexity of $\widetilde{O}((CR)^{2/3})$,
where $R$ denotes the $\ell_{2}$ diameter of the domain. However,
since the worst-case ratio between the $\ell_{2}$ and $\ell_{\infty}$
diameters is $\sqrt{n}$, the iteration complexity may degrade by
a factor of $n^{1/3}$ when measured under the $\ell_{\infty}$ benchmark
used in \cite{adil2021unifying} and in this work---while still maintaining
the desirable sublinear dependence on $C$ and $R$. In comparison,
our approach incurs only a $d^{1/3}$ factor, which represents a substantial
improvement when $n\gg d$. Additionally we note that accelerated
algorithms typically require smoothness, and this is the case for
those given in \cite{carmon2020acceleration,doikov2023minimizing}.
However, both the algorithm from \cite{adil2021unifying} and ours
manage to achieve an improved iteration complexity, without resorting
to smoothness or other additional properties. We note that \cite{karimireddy2018global}
provide an analysis for both the smooth and non-smooth settings, but
they exhibit a dependence on the $\ell_{2}$ diameter of the level
set of the initialization point, which scales as $n^{1/2}$ in the
worst case.

$\ell_{\infty}$ regression is a classical optimization problem, but
the first major breakthrough occurred in the special case of max flow
in \cite{ChristianoKMST11}, where the authors developed an algorithm
requiring $\widetilde{O}\left(n^{1/3}\right)$ linear system solves,
improving upon the standard $\widetilde{O}\left(n^{1/2}\right)$ complexity.
This result was further refined and generalized in \cite{ChinMMP13,ene2019improved},
both achieving $\widetilde{O}\left(n^{1/3}\right)$ iteration complexity.
More recently, \cite{jambulapati2022improved} studied both $\ell_{p}$
regression for general $p<\infty$ and $\ell_{\infty}$ regression,
presenting algorithms whose iteration count depends on $d$ rather
than $n$, leveraging Lewis weights. Previously, \cite{lee2014path}
devised an interior point method for tall LPs where the number of
iterations depends on $d$ rather than $n$ by employing Lewis weights.
For the specific case of $\ell_{\infty}$ regression, \cite{jambulapati2022improved}
incorporated Lewis weights into the accelerated second-order method
of \cite{carmon2020acceleration}, which approximates the $\ell_{\infty}$
norm via the Hessian of a log-exp-sum function. In contrast, while
we also draw on the Lewis weight approach from \cite{jambulapati2022improved},
we develop a simpler, lightweight IRLS method inspired by \cite{ene2019improved}.

\subsection{Overview of our approach}

To establish our main result, we employ a trust-region method. While
such methods have been extensively studied, the distinguishing feature
of our approach is that the convergence rate depends on the $\ell_{\infty}$
diameter of the level set that contains the initialization point,
rather than its $\ell_{2}$ diameter, which can be larger by up to
a factor of $\sqrt{n}$. This yields both theoretical and practical
improvements. Our method is motivated by the box-constrained Newton
method which was introduced by \cite{cohen2017matrix,allen2017much},
and later adapted to quasi-self-concordant minimization \cite{adil2021unifying}.
This perspective allows us to reduce the optimization process to executing
an outer loop of $O\left(CR\log\frac{h\left(x_{0}\right)-h\left(x^{*}\right)}{\varepsilon}\right)$
iterations, where $C$ is the QSC parameter of $f$ and $R=\max\{\|Ax-Ax^{*}\|_{\infty}:h(x)\leq h(x_{0})\}$
denotes the $\ell_{\infty}$ diameter of the level set of the initialization
point $x_{0}$.

In each iteration of the outer loop, we compute the update by approximately
minimizing a quadratic approximation of $h$ over the $\ell_{\infty}$
ball of radius $1/C$ centered at the current iterate. This radius
constraint guarantees that the Hessian of $h$ remains well approximated
within the entire trust region. Consequently, the approximate minimizer
computed here achieves sufficient decrease in objective value compared
to the true minimizer of the region.

We then show that the required subproblem is of the form 
\begin{align}
\max_{\|A\Delta\|_{\infty}\le\frac{1}{C}} & g^{\top}\left(A\Delta\right)-\frac{1}{2}\left(A\Delta\right)^{\top}D\left(A\Delta\right)\,,\label{eq:residual-problem-1}
\end{align}
where $g\in\mathbb{R}^{m}$ is a vector and $D$ is a non-negative
diagonal matrix and can be reduced to implementing a residual solver
which approximately maximizes a concave quadratic objective as in
(\ref{eq:residual-problem-1}) subject to $\ell_{\infty}$ constraints.
We describe this reduction in Section \ref{sec:Generalization-to-QSC}.
To obtain an efficient residual solver, we design an IRLS method whose
convergence rate depends only on the smaller dimension $d$, requiring
only $\widetilde{O}\left(d^{1/3}\right)$ linear system solves. Finally,
the extra factor $\log\left(CR\right)\log\left(\frac{h(x^{(0)})-h(x^{*})}{\epsilon}\right)$
comes from a binary search procedure, detailed in Section \ref{sec:Generalization-to-QSC}.

\section{Preliminaries\protect\label{sec:Preliminaries}}

\textbf{Notation}. Throughout the paper, when it is clear from context,
we use scalar operations between vectors to denote the coordinate-wise
application, e.g, for $x\in\R^{d}$, $x^{2}$ is an $\R^{d}$ vector
with entry $x_{i}^{2}$ in the $i$-th coordinate. For $w\in\R^{d},$we
use $\diag\left(w\right)\in\R^{d\times d}$ to denote the diagonal
matrix with diagonal entries given by $w$. When it is clear from
context, we also abuse the notation and use the scalar $a$ to denote
the vector with all coordinates having value $a$, and the dimension
will be inferred from the context.

\textbf{Quasi-self-concordant functions}. We recall the definition
of quasi-self-concordant functions.
\begin{defn}
\label{def:qsc}A function $f:\R\to\R$, is $C$-quasi-self-concordant
for $C>0$ if for all $x\in\R$, $\left|f'''(x)\right|\le Cf''(x)$. 
\end{defn}
\textbf{Lewis weights and computation}. Our main algorithms rely on
the use of $\ell_{\infty}$ Lewis weight overestimates. To start,
we introduce leverage scores and $\ell_{\infty}$ Lewis weights for
a matrix $A\in\R^{n\times d}$.
\begin{defn}[Leverage scores]
 For a matrix $A$, the leverage score of the $i$-th row of $A$
is given by $\sigma\left(A\right){}_{i}=a_{i}^{\top}\left(A^{\top}A\right){}^{-1}a_{i}$
where $a_{i}$ is the $i$-th row of $A$.
\end{defn}
\begin{defn}[$\ell_{\infty}$ Lewis weights]
 The $\ell_{\infty}$ Lewis weights of matrix $A$ are the unique
vector $w\in\R_{\ge0}^{n}$ that satisfies $w_{i}=\sigma\left(\diag(w)^{1/2}A\right){}_{i}$
for all $i\in[n]$.
\end{defn}
While finding $\ell_{\infty}$ Lewis weights is computationally expensive,
for the purpose of our algorithms, we only need to use $\ell_{\infty}$
Lewis weight overestimates, a notion introduced by \cite{jambulapati2022improved}.
\begin{defn}[$\ell_{\infty}$ Lewis weight overestimates]
 The $\ell_{\infty}$ Lewis weight overestimates of matrix $A$ are
a vector $w\in\R_{\ge0}^{n}$ that satisfies $d\le\|w\|_{1}\le2d$
and $w_{i}\ge\sigma\left(\diag(w)^{1/2}A\right){}_{i}$ for all $i\in[n]$.
\end{defn}
One can efficiently compute $\ell_{\infty}$ Lewis weight overestimates
of matrix $A$ by the fixed point iterations by \cite{cohen2019near},
using $\widetilde{O}(1)$ linear system solves (see also \cite{jambulapati2022improved}).

\textbf{Linear system solver}. Our algorithms in the unconstrained
setting assume access to an oracle that solves problems of the form
$\min_{x:g^{\top}x=-1}\langle r,(Ax)^{2}\rangle$. Finding the minimizer
$x$ is equivalent to solving a linear system of the form $A^{\top}DAx=g$,
for a diagonal matrix $D$. The problem has a closed form solution
$x=-\frac{B^{-1}g}{g^{\top}B^{-1}g}$ where $B=A^{\top}\diag(r)A$.

\textbf{Energy increase lower bounds}. Our approach is based on the
electrical flow interpretation of the problem. Electrical flow interpretation
was developed for flow problems on graph, and later extended for problems
involving general linear systems. For vector $g\in\R^{d}$, matrix
$A\in\R^{n\times d}$, and vector $r\in\R_{\ge0}^{n}$ which we will
refer to as resistances, we consider the following quantity $\cE(r)=\min_{x:g^{\top}x=-1}\langle r,(Ax)^{2}\rangle$,
which we will refer to as the electrical energy. $\cE(r)$ has the
nice property of being monotone in $r$. The increase in the energy
when the resistances increase is lower bounded in the two lemmas \ref{lem:energy-usual}
and \ref{lem:energy-lewis-weight} (provided in the appendix), which
are both critical in the design and analysis of our algorithms.

\section{$\ell_{\infty}$-Regression with Lewis Weights\protect\label{sec:Regression-with-Lewis-weight}}

As a warm up for the general QSC optimization algorithm, in this section,
we solve the problem of overdetermined $\ell_{\infty}$-regression
to a $(1+\epsilon)$-approximation, in the form
\begin{align}
\min_{g^{\top}x=-1} & \|Ax\|_{\infty}\label{eq:Linf-problem}
\end{align}
where $A\in\R^{n\times d}$ with $n\ge d$. That is, we want to find
a solution $x$ to the above problem that satisfies $g^{\top}x=-1$
and $\|Ax\|_{\infty}\le(1+\epsilon)\|Ax^{*}\|_{\infty}$, where we
denote $x^{*}=\arg\min_{g^{\top}x=-1}\|Ax\|_{\infty}$. We note that
the commonly seen problem $\min_{x}\left\Vert Ax-b\right\Vert _{\infty}$
can be transformed into the above form by adding $b$ as a column
to $A$ and considering $x\in\R^{d+1}$, with the extra dimension
having value $-1$. 

\subsection{Algorithm}

\begin{algorithm}
\caption{$\ell_{\infty}$-regression for $\min_{g^{\top}x=-1}\|Ax\|_{\infty}$}

\label{alg:Linf-regression-main}

\begin{algorithmic}[1]

\STATE \textbf{Input}:$A$, $g$, $\epsilon$

\STATE \textbf{Output}: Find $x$ such that $g^{\top}x=-1$ and $\|Ax\|_{\infty}\le(1+\epsilon)\min_{g^{\top}x=-1}\|Ax\|_{\infty}$ 

\STATE Initialize $x^{(0)}=\arg\min_{g^{\top}x=-1}\|Ax\|_{2}$; $L=\lfloor\log_{1+\epsilon}\frac{\|Ax^{(0)}\|_{2}}{n^{1/2}}\rfloor$;
$U=\lfloor\log_{1+\epsilon}\|Ax^{(0)}\|_{2}\rfloor$

\STATE \textbf{while} $L<U$:

\STATE $\qquad$$P=\lfloor\frac{L+U}{2}\rfloor$; $M=(1+\epsilon)^{P}$

\STATE $\qquad$\textbf{if} $\mathsf{Subsolver}(A,g,\epsilon,M)$
is infeasible then $L=P+1$

\STATE $\qquad$\textbf{else }let $x^{(t)}$ be the output of $\mathsf{Subsolver}(A,g,\epsilon,M)$;
$U=P$, $t\gets t+1$

\STATE \textbf{return} $x^{(t)}$

\end{algorithmic}
\end{algorithm}
\begin{algorithm}[th]
\caption{$\ell_{\infty}$-regression $\mathsf{Subsolver}(A,g,\epsilon,M)$}

\label{alg:Linf-regression}

\begin{algorithmic}[1]

\STATE\textbf{Output}: Find $x$ such that $\|Ax\|_{\infty}\le(1+\epsilon)M$
or find $r$ such that $\frac{\cE(r)}{\|r\|_{1}}\ge\big(\frac{M}{1+\epsilon}\big)^{2}$

\STATE\textbf{Initialize}: $r^{(0)}=w+\frac{d}{n}$, where $w$ is
an $\ell_{\infty}$ Lewis weight overestimate vector of $A$

\STATE$t=0$, $t'=0$, $s^{(t')}=0$

\STATE\textbf{while} $\|r^{(t)}\|_{1}\leq\frac{\|r^{(0)}\|_{1}}{\epsilon}$

\STATE$\qquad x^{(t)}=\arg\min_{x:g^{\top}x=-1}\langle r^{(t)},(Ax)^{2}\rangle$

\STATE$\qquad$\textbf{if} $\frac{\langle r^{(t)},(Ax^{(t)})^{2}\rangle}{\|r^{(t)}\|_{1}}\ge\big(\frac{M}{1+\epsilon}\big)^{2}$
\textbf{then} \textbf{return} $r^{(t)}$

\STATE$\qquad$\textbf{if} $\|Ax^{(t)}\|_{\infty}\le(1+\epsilon)M$
\textbf{then} \textbf{return} $x^{(t)}$

\STATE$\qquad$\textbf{if} $\|Ax^{(t)}\|_{\infty}>S=d^{\frac{1}{3}}M$:\textbf{\hfill{}}$\triangleright$
\emph{Case 1}

\STATE$\qquad\qquad$Let $i$ be an index such that $|(Ax^{(t)})_{i}|=\|Ax^{(t)}\|_{\infty}$

\STATE$\qquad\qquad$$r_{j}^{(t+1)}=\begin{cases}
r_{j}^{(t)} & j\neq i\\
r_{j}^{(t)}+1 & j=i
\end{cases}$

\STATE$\qquad$\textbf{else}:\textbf{\hfill{}}$\triangleright$
\emph{Case 2}

\STATE$\qquad\qquad$Let $t'=t'+1$; $s^{(t')}=s^{(t'-1)}+x^{(t)}$;

\STATE$\qquad\qquad$\textbf{if} $\|As^{(t')}\|_{\infty}/t'\le(1+\epsilon)M$
\textbf{then} \textbf{return} $s^{(t')}/t'$

\STATE$\qquad\qquad$$r_{j}^{(t+1)}=\begin{cases}
r_{j}^{(t)}\frac{(Ax^{(t)})_{j}^{2}}{M^{2}} & \text{if }(Ax^{(t)})_{j}^{2}\ge(1+\epsilon)M^{2}\\
r_{j}^{(t)} & \mbox{otherwise}
\end{cases}$

\STATE$\qquad$$t=t+1$

\STATE\textbf{return} $r^{(t)}$

\end{algorithmic}
\end{algorithm}

The main part of our algorithm is the subroutine shown in Algorithm
\ref{alg:Linf-regression} which takes as input a guess for the optimal
objective of Problem \ref{eq:Linf-problem}. To obtain a $(1+\epsilon)$-approximation
for this value, our main algorithm \ref{alg:Linf-regression-main}
proceeds by performing a binary search. Starting with an initial solution
$x^{(0)}=\arg\min_{g^{\top}x=-1}\|Ax\|_{2}$, the algorithm performs
a binary search on the $(1+\epsilon)$-grid between $\|Ax^{(0)}\|_{2}$
and $\frac{\|Ax^{(0)}\|_{2}}{n^{1/2}}$ (which are an upper bound
and a lower bound for $\|Ax^{*}\|_{\infty}$). The number of guesses
is at most $\log\frac{\log(\|Ax^{(0)}\|_{\infty})-\log(\frac{\|Ax^{(0)}\|_{2}}{n^{1/2}})}{\epsilon}=O(\log\frac{\log n}{\epsilon})$.

Our main focus in this section is then to demonstrate that Algorithm
\ref{alg:Linf-regression}, given a guess $M,$ outputs a solution
$x$ with $\|Ax\|_{\infty}\le(1+\epsilon)M$, if any. For our convenience,
we start with the dual formulation of the problem with the squared
objective
\begin{align*}
\min_{g^{\top}x=-1}\|Ax\|_{\infty}^{2} & =\min_{g^{\top}x=-1}\max_{\|r\|_{1}=1}\left\langle r,(Ax)^{2}\right\rangle =\max_{\|r\|_{1}=1}\min_{g^{\top}x=-1}\left\langle r,(Ax)^{2}\right\rangle =\max_{r\ge0}\frac{\cE(r)}{\|r\|_{1}},
\end{align*}
where we use $\cE(r)$ to denote the objective $\min_{g^{\top}x=-1}\left\langle r,(Ax)^{2}\right\rangle $.
For a guess $M$ for the optimal objective of Problem \ref{eq:Linf-problem},
the goal of our algorithm is to produce a primal solution $x$ such
that $g^{\top}x=-1$ and $x$ satisfies $\|Ax\|_{\infty}\le(1+\epsilon)M$
or certify that $\min_{g^{\top}x=-1}\|Ax\|_{\infty}^{2}=\max_{r\ge0}\frac{\cE(r)}{\|r\|_{1}}\ge\big(\frac{M}{1+\epsilon}\big)^{2}$,
in which case we need to increase $M$. 

The core idea of our algorithm is to maintain in each iteration $t$
the following invariant 
\begin{align}
\cE(r^{(t+1)})-\cE(r^{(t)}) & \ge M^{2}(\|r^{(t+1)}\|_{1}-\|r^{(t)}\|_{1}).\label{eq:invariant}
\end{align}
The telescoping property of this invariant guarantees that if the
algorithm outputs a dual solution $r^{(T)}$ with $\|r^{(T)}\|_{1}$
significantly large compared to the initial $\|r^{(0)}\|_{1}$, we
will have $\frac{\cE(r^{(T)})}{\|r^{(T)}\|_{1}}\ge\frac{M^{2}}{(1+\epsilon)^{2}}$. 

We leverage the energy increase lower bound lemmas \ref{lem:energy-lewis-weight}
and \ref{lem:energy-usual} to determine the update for the dual solution
$r$. First, the initial dual solution $r^{(0)}$ is set to $w+\frac{d}{n}$,
where $w$ is a $\ell_{\infty}$ Lewis weight overestimate vector
for matrix $A$, which can be efficiently obtained using the fixed
point iteration from \cite{cohen2019near}, which we revise in Section
\ref{sec:approx-lewis} of the appendix. The $\frac{d}{n}$ component
is added to guarantee that the coordinates of the dual solution are
not too small. With this initialization, subsequent update $r^{(t)}$
always satisfies the condition of Lemma \ref{lem:energy-lewis-weight}
and thus we can apply it when necessary. To update the dual solution,
the novelty in our method is to distinguish between the two regimes.
In the high width regime (Case 1), ie, when the corresponding minimizer
$x^{(t)}$ to $\min_{g^{\top}x=-1}\left\langle r,(Ax)^{2}\right\rangle $
satisfies $\|Ax^{(t)}\|_{\infty}>S$, where $S$ is set to $d^{\frac{1}{3}}M$,
we update a single coordinate $i$ that achieves the maximum value
$|(Ax^{(t)})_{i}|=\|Ax^{(t)}\|_{\infty}$ by setting $r_{i}^{(t+1)}=r_{i}^{(t)}+1$.
Lemma \ref{lem:energy-lewis-weight} will guarantee that the invariant
\ref{eq:invariant} holds, and at the same time, $\cE\left(r^{(t)}\right)$
increases fast. In the low width regime (Case 2), when $\|Ax^{(t)}\|_{\infty}\le S$,
we use the lower bound given by Lemma \ref{lem:energy-usual}. In
order to maintain the invariant \ref{lem:energy-lewis-weight}, we
can guarantee for each coordinate $j$, $\frac{(Ax^{(t)})_{j}^{2}r_{j}^{(t)}\big(1-\frac{r_{j}^{(t)}}{r_{j}^{(t+1)}}\big)}{r_{j}^{(t+1)}-r_{j}^{(t)}}\ge M^{2}$.
From here, we can derive the update rule as shown in Algorithm \ref{alg:Linf-regression}.

We give the full description in Algorithm \ref{alg:Linf-regression}.

\subsection{Analysis}

We show the full analysis of Algorithm \ref{alg:Linf-regression}
in Appendix \ref{sec:analysis-regression}. To show Theorem \ref{thm:Linf-regression},
first, we can see that the algorithm performs $O\big(\log\frac{\log(\left\Vert Ax_{0}\right\Vert _{\infty}-\left\Vert Ax^{*}\right\Vert _{\infty})}{\epsilon}\big)$
binary search steps. Our analysis shows that, in each step, the algorithm
uses $O\big(\big(\frac{1}{\epsilon^{2}}+\frac{d^{1/3}}{\epsilon}\big)\log\frac{n}{\epsilon}\big)$
calls to the solver for problems of the form $\min_{x:g^{\top}x=-1}\langle r,(Ax)^{2}\rangle$.
By combining these two facts, we immediately have Theorem \ref{thm:Linf-regression}.

\section{Quasi-Self-Concordant Optimization\protect\label{sec:Generalization-to-QSC}}

\begin{algorithm}[th]
\caption{Algorithm for optimizing $h(x)=\sum_{i=1}^{n}f((Ax-b)_{i})$}

\label{alg:QSC-main}

\begin{algorithmic}[1]

\STATE \textbf{Input}: initial solution $x_{0}$, lower bound $B$
on $h$, diameter $R$ of sub-level set ${\cal L}_{0}$.

\STATE \textbf{Output}: Find $x$ such that $h(x)\le h(x^{*})+\epsilon$

\STATE \textbf{Initialize}: $x^{(0)}=x_{0}$

\STATE Let $T=O\left(CR\log\left(\frac{h(x^{(0)})-B}{\epsilon}\right)\right)$

\STATE \textbf{for} $\left(t=0;t<T;t\gets t+1\right)$:\textbf{\label{forloop-t}\hfill{}}$\triangleright$
or terminate when $h(x^{(t+1)})-h(x^{(t)})\le O(\epsilon/CR)$

\STATE $\qquad$$\mathbf{\Delta}\gets\left\{ \emptyset\right\} $\textbf{\hfill{}}$\triangleright$
set of candidate updates

\STATE $\qquad$\textbf{for} $\left(\nu=h(x^{(t)})-B;\text{\ensuremath{\nu}\ensuremath{\geq}\ensuremath{\epsilon}};\nu\gets\frac{1}{2}\nu\right):$\textbf{\label{forloop-nu}\hfill{}}$\triangleright$
halving after each step

\STATE $\qquad\qquad$\textbf{for $\left(M=e^{2}\nu;M\geq\frac{\nu}{CR};M\gets\frac{1}{2}M\right):$\label{forloop-M}\hfill{}}$\triangleright$
halving after each step

\STATE $\qquad\qquad\qquad$Let $\Delta_{\nu,M}$ be a primal solution
output by $\ressolver(x^{(t)},M)$ if any

\STATE $\qquad\qquad\qquad$$\mathbf{\Delta}\gets\mathbf{\Delta}\cup\left\{ \Delta_{\nu,M}\right\} $

\STATE $\qquad\qquad$$x^{(t+1)}=x^{(t)}-\frac{1}{e^{2}}\Delta^{(t)}$
where $\Delta^{(t)}=\arg\min_{\Delta\in\mathbf{\Delta}}h\left(x^{(t)}-\frac{1}{e^{2}}\Delta\right)$\textbf{\hfill{}}$\triangleright$
update step

\STATE 

\end{algorithmic}
\end{algorithm}

\begin{algorithm}[th]
\caption{$\protect\ressolver(x,M)$}

\label{alg:residual-solver}

\begin{algorithmic}[1]

\STATE\textbf{Initialize}: $r^{(0)}=w+\frac{d}{n}$, where $w$ is
an $\ell_{\infty}$ Lewis weight overestimate vector of $A$

\STATE$g_{i}=\frac{-1}{M}(A^{\top}\nabla f(x))_{i}$, $s_{i}=f''((Ax)_{i})$,
$t=0$, $t'=0$, $v^{(t')}=0$

\STATE\textbf{while} $\|r^{(t)}\|_{1}\leq2(\|w\|_{1}+d)$

\STATE$\qquad$Let $p^{(t)}=2(\|w\|_{1}+d)s+\frac{MC^{2}}{2}r^{(t)}$;
$\Delta^{(t)}=\arg\min_{\Delta:g^{\top}\Delta=-1}\left\langle p^{(t)},(A\Delta)^{2}\right\rangle $

\STATE$\qquad$\textbf{if} $\left\langle s+\frac{MC^{2}}{2}\frac{r^{(t)}}{\left\Vert r^{(t)}\right\Vert _{1}},(A\Delta^{(t)})^{2}\right\rangle \ge13M$
\textbf{then} \textbf{return} $r^{(t)}$

\STATE$\qquad$\textbf{if} $\|A\Delta^{(t)}\|_{\infty}\le\frac{11}{C}$
\textbf{then} \textbf{return} $\Delta^{(t)}$

\STATE$\qquad$\textbf{if} $\|A\Delta^{(t)}\|_{\infty}>S=\frac{11d^{\frac{1}{3}}}{C}$:\textbf{\hfill{}}$\triangleright$
\emph{Case 1}

\STATE$\qquad\qquad$Let $i$ be an index such that $|(A\Delta^{(t)})_{i}|=\|A\Delta^{(t)}\|_{\infty}$

\STATE$\qquad\qquad$$r_{j}^{(t+1)}=\begin{cases}
r_{j}^{(t)} & j\neq i\\
r_{j}^{(t)}+1 & j=i
\end{cases}$

\STATE$\qquad$\textbf{else}:\textbf{\hfill{}}$\triangleright$
\emph{Case 2}

\STATE$\qquad\qquad$Let $t'=t'+1$; $v^{(t')}=v^{(t'-1)}+\Delta^{(t)}$

\STATE$\qquad\qquad$\textbf{if} $\|Av^{(t')}\|_{\infty}/t'\le\frac{11}{C}$
\textbf{then} \textbf{return} $v^{(t')}/t'$

\STATE$\qquad\qquad$$r_{j}^{(t+1)}=\begin{cases}
\frac{1}{52}r_{j}^{(t)}(A\Delta^{(t)})_{j}^{2}C^{2} & \text{if }(A\Delta^{(t)})_{j}^{2}\ge\frac{100}{C^{2}}\\
r_{j}^{(t)} & \mbox{otherwise}
\end{cases}$

\STATE$\qquad$$t=t+1$

\STATE\textbf{return} $r^{(t)}$

\end{algorithmic}
\end{algorithm}

To simplify exposition, we present an algorithm for solving the \textit{unconstrained}
problem
\begin{align}
\min_{x} & \ h(x)\coloneqq\sum_{i=1}^{n}f\left((Ax-b)_{i}\right)\label{eq:QSC-problem}
\end{align}
where $f:\R\to\R$ is a $C$-quasi-self-concordant function defined
in Definition \ref{def:qsc}, $A\in\R^{n\times d}$, and $b\in\R^{n}$
with $n\ge d$. In the appendix, we give an extension to the fully
generalized constrained problem $\min_{x:Nx=v}\sum_{i=1}^{n}f\left((Ax-b)_{i}\right)$
as stated in Theorem \ref{thm:QSC-main-theorem}.

\textbf{Notation:} We denote by $x^{*}$ the optimal solution to the
problem and $x^{(0)}$ the initial solution used by our algorithm.
Also, let us write $\nabla f(x)=\left(f'\left((Ax-b)_{1}\right),\dots,f'\left((Ax-b)_{n}\right)\right){}^{\top}$,
$\nabla^{2}f(x)=\diag\left(f''\left((Ax-b)_{1}\right),\dots,f''\left((Ax-b)_{n}\right)\right)$.
We then have $\nabla h(x)=A^{\top}\nabla f(x)$ and $\nabla^{2}h(x)=A^{\top}\nabla^{2}f(x)A$.

\textbf{Assumptions:} Our algorithm and its analysis require the following
assumptions. 
\begin{assumption}
\label{assu:a1}We assume that there exists a finite value $B$ such
that $h(x)\ge B$, for all $x\in\R^{d}$.
\end{assumption}
\begin{assumption}
\label{assu:a2}Let ${\cal L}_{0}=\{x:h(x)\le h(x^{(0)})\}$. We assume
that there exists a finite value $R$ such that $\max_{x\in{\cal L}_{0}}\|Ax-Ax^{*}\|_{\infty}\le R.$
\end{assumption}
We note that Assumption \ref{assu:a1} is standard in prior work (eg.
by \cite{adil2021unifying}). Many commonly used functions are non-negative
and thus admit a trivial lower bound $B=0$. Assumption \ref{assu:a2}
on the boundedness of the diameter of the level set is also common
in prior work on trust-region Newton method, such as \cite{karimireddy2018global,doikov2023minimizing}.
Such dependencies are natural, and virtually all known algorithms
for QSC optimization have a dependency on the size and shape of the
trust region, as well as on the distance between the initial and the
optimal point (e.g. \cite{adil2021unifying} define their distance
parameter such that $\left\Vert Ax^{*}\right\Vert _{\infty}\leq R$
and $\left\Vert Ax^{*}\right\Vert _{2}\leq R$).

\textbf{Our approach:} We follow a trust-region based template. In
particular, our approach leverages the box-constrained Newton method
\cite{cohen2017matrix}, which was used in the context of matrix scaling,
a special case of QSC minimization. A similar method was developed
in parallel by \cite{allen2017much} and was employed by \cite{adil2021unifying}
for optimizing problem \ref{eq:QSC-problem}. The general idea behind
these trust region templates is that, if the Hessian of the objective
is promised to not change by more than a constant multiplicative factor
within an $\ell_{\infty}$ region centered around the current iterate,
then minimizing the local quadratic approximation within this region
yields a new iterate which makes significant progress in function
value. In fact, this progress is comparable to the progress made by
moving to the best point within the $\ell_{\infty}$ region. Hence
the remaining challenge is to approximately solve the quadratic minimization
problem subject to an $\ell_{\infty}$ constraint. Since the subproblem
is very robust to approximation, it suffices to obtain a constant
factor approximation to a regularized $\ell_{\infty}^{2}+\ell_{2}^{2}$
problem, which we achieve via a new IRLS solver.  Now we can provide
formal statements.

Quasi-self-concordance implies Hessian stability, which allows us
to reduce Problem \ref{eq:QSC-problem} to solving a sequence of residual
problems of the form
\begin{align}
\max_{\|A\Delta\|_{\infty}\le\frac{1}{C}}\res_{x}(\Delta) & \coloneqq\nabla f(x)^{\top}(A\Delta)-\frac{1}{e}(A\Delta)^{\top}\nabla^{2}f(x)(A\Delta)\label{eq:residual-problem}
\end{align}
We prove that an approximate solution to Problem \ref{eq:residual-problem}
allows us to make significant progress in the function value of $h$.
The full proof can be found in Section \ref{sec:analysis-QSC-alg}.
\begin{lem}
\label{lem:progress} Consider an iteration $t$ of Algorithm \ref{alg:QSC-main},
and let $x=x^{(t)}$ be the current iterate. Suppose the $\ressolver$
can compute $\widetilde{\Delta}$ such that $\res_{x}(\widetilde{\Delta})\ge\kappa\max_{\|A\Delta\|_{\infty}\le\frac{1}{C}}\res_{x}(\Delta)$.
Then we have
\begin{align*}
h\Big(x-\frac{\widetilde{\Delta}}{e^{2}}\Big)-h(x^{*}) & \le\Big(1-\frac{\kappa}{e^{2}CR}\Big)\left(h(x)-h(x^{*})\right).
\end{align*}
Consequently, Algorithm \ref{alg:QSC-main} constructs a solution
$x^{(T)}$ such that $h(x^{(T)})\le h(x^{*})+\epsilon$ using $T=O\big(\frac{RC}{\kappa}\log\frac{h(x^{(0)})-h(x^{*})}{\epsilon}\big)$
iterations of the outermost for loop.
\end{lem}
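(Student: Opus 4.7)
The plan is to establish a per-iteration contraction factor of $(1-\kappa/(e^2 CR))$ (up to constants) in the gap $h(x^{(t)})-h(x^*)$, then iterate. The per-iteration argument has two pieces: (i) a descent lemma relating $\res_x(\widetilde{\Delta})$ to the actual decrease in $h$; and (ii) a lower bound on $\max_{\|A\Delta\|_\infty\le 1/C}\res_x(\Delta)$ in terms of $G:=h(x)-h(x^*)$.

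For (i), I would first invoke the standard consequence of $C$-quasi-self-concordance: for every $y$ with $\|A(y-x)\|_\infty\le 1/C$, one has $\tfrac{1}{e}\nabla^2 h(x)\preceq\nabla^2 h(y)\preceq e\nabla^2 h(x)$ (this follows by applying $|f'''|\le Cf''$ coordinate-wise on $Ay$). Since $\|A\widetilde{\Delta}/e^2\|_\infty\le 1/(e^2 C)\le 1/C$, Taylor's theorem with integral remainder together with the upper Hessian bound give
\[
h\bigl(x-\widetilde{\Delta}/e^2\bigr)-h(x)\;\le\;-\tfrac{1}{e^2}\nabla h(x)^\top\widetilde{\Delta}+\tfrac{1}{2e^3}\widetilde{\Delta}^\top\nabla^2 h(x)\widetilde{\Delta}\;\le\;-\tfrac{1}{e^2}\res_x(\widetilde{\Delta}),
\]
where the last inequality is $\tfrac{1}{2e^3}\le\tfrac{1}{e^3}$. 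This is precisely why $\res_x$ is defined with coefficient $\tfrac{1}{e}$ and the step is $1/e^2$: the Taylor quadratic factor $\tfrac{e}{2}\cdot\tfrac{1}{e^4}$ is dominated by the residual's $\tfrac{1}{e^3}$.

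For (ii) I would test the direction $\alpha(x-x^*)$ with $\alpha\in[0,1/(CR)]$, which is feasible by Assumption~\ref{assu:a2} (the iterate $x=x^{(t)}$ stays in ${\cal L}_0$ since the algorithm monotonically decreases $h$). Writing $\widehat A:=\nabla h(x)^\top(x-x^*)$ and $\widehat B:=(x-x^*)^\top\nabla^2 h(x)(x-x^*)$, convexity gives $\widehat A\ge G$. Combining the lower Hessian bound $h((1-\alpha)x+\alpha x^*)\ge h(x)-\alpha\widehat A+\tfrac{\alpha^2}{2e}\widehat B$ with the convexity upper bound $h((1-\alpha)x+\alpha x^*)\le h(x)-\alpha G$ yields $\tfrac{\alpha}{2e}\widehat B\le\widehat A-G$, and taking $\alpha=1/(CR)$ produces the key inequality $\widehat B\le 2eCR(\widehat A-G)$. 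Then I would maximize the quadratic $\alpha\widehat A-\tfrac{\alpha^2}{e}\widehat B$ over $\alpha\in[0,1/(CR)]$, splitting on whether the unconstrained optimum $\alpha^\star=e\widehat A/(2\widehat B)$ is feasible. In the boundary case $\widehat B<e\widehat A CR/2$, taking $\alpha=1/(CR)$ gives residual at least $\widehat A/(2CR)\ge G/(2CR)$; in the interior case, substituting $\widehat B\le 2eCR(\widehat A-G)$ into $e\widehat A^2/(4\widehat B)$ reduces the goal $\ge G/(2CR)$ to the identity $(\widehat A-2G)^2\ge 0$. Either way, $\max_{\|A\Delta\|_\infty\le 1/C}\res_x(\Delta)\ge G/(2CR)$.

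Combining (i) and (ii) with the $\kappa$-approximation hypothesis yields $h(x-\widetilde{\Delta}/e^2)-h(x^*)\le(1-\kappa/(2e^2CR))(h(x)-h(x^*))$, and since Algorithm~\ref{alg:QSC-main} selects the candidate in $\mathbf{\Delta}$ minimizing $h$, the same contraction holds for $x^{(t+1)}$. Iterating over $T$ outer steps and using $(1-c)^T\le e^{-cT}$ shows $T=O((CR/\kappa)\log((h(x^{(0)})-h(x^*))/\epsilon))$ suffices. The main obstacle is the lower bound in (ii): convexity alone does not control the quadratic term $\widehat B$, and one has to couple it with the QSC Hessian lower bound along $[x,x^*]$ to derive $\widehat B\le 2eCR(\widehat A-G)$, after which the two-case analysis collapses to the squared identity $(\widehat A-2G)^2\ge 0$.
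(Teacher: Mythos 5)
Your proposal is correct (up to an inessential factor of $2$ in the contraction rate, which does not affect the $O(\cdot)$ iteration count), and part (i) of your argument matches the paper's step-progress inequality almost exactly. However, for part (ii) you take a genuinely different route from the paper. Both approaches test the scaled direction $\widehat{\Delta}=\tfrac{1}{CR}(x-x^*)$, but you decompose the residual into a linear term $\widehat A=\nabla h(x)^\top(x-x^*)$ and a quadratic term $\widehat B=(x-x^*)^\top\nabla^2 h(x)(x-x^*)$, derive the auxiliary bound $\widehat B\le 2eCR(\widehat A-G)$, and then run a two-case optimization over the step length $\alpha$ that ultimately reduces to $(\widehat A-2G)^2\ge 0$. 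The paper avoids all of this: it applies the QSC Taylor lower bound directly to $\widehat{\Delta}$ to get $\res_x(\widehat{\Delta})\ge h(x)-h(\widehat x)$ where $\widehat x=x-\widehat{\Delta}$ is the convex combination $\tfrac{1}{CR}x^*+(1-\tfrac{1}{CR})x$, and then uses convexity of $h$ to bound $h(x)-h(\widehat x)\ge\tfrac{1}{CR}\bigl(h(x)-h(x^*)\bigr)$. This collapses the lower bound on $\res_x(\widehat{\Delta})$ into a two-line chain, with no need to estimate $\widehat A$, $\widehat B$ separately or to split into cases. Your approach is more explicit about the interplay between the linear and quadratic parts (which could be useful if one wanted a sharper constant from the unconstrained optimum $\alpha^\star$), but the paper's function-value argument is considerably shorter and is the reason the $\tfrac{1}{e}$ coefficient was chosen in the definition of $\res_x$: with that coefficient, the QSC Taylor lower bound over the $\ell_\infty$-ball of radius $1/C$ makes $-\res_x(\widehat{\Delta})$ exactly an underestimate of $h(\widehat x)-h(x)$, no remainder bookkeeping required.
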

To solve Problem \ref{eq:residual-problem}, we recast it in a more
amenable form, which is related to an $\ell_{\infty}$-regression
problem, but with the key difference that it contains an additional
quadratic term $\left\langle s,(A\Delta)^{2}\right\rangle $. Specifically,
given a guess $M$ for the objective value, we define the minimization
problem
\begin{align}
\min_{g^{\top}\Delta=-1} & \left\langle s,(A\Delta)^{2}\right\rangle +\frac{MC^{2}}{2}\|A\Delta\|_{\infty}^{2}\,,\label{eq:new-residual-problem}
\end{align}
where $g_{i}=\frac{-1}{M}\left(A^{\top}\nabla f(x)\right)_{i}$ and
$s_{i}=f''\left((Ax)_{i}\right)$ for all $i$. We show that algorithm
$\ressolver$ satisfies the following invariant.

\begin{invariant}\label{inv:invariant1}The algorithm either outputs
a solution $\Delta$ such that $g^{\top}\Delta=-1$, $\|A\Delta\|_{\infty}\le\frac{11}{C},$
and $\left\langle s,(A\Delta)^{2}\right\rangle \le\min_{g^{\top}\Delta=-1}\left\langle s,(A\Delta)^{2}\right\rangle +\frac{MC^{2}}{2}\|A\Delta\|_{\infty}^{2},$
or certifies that $\min_{g^{\top}\Delta=-1}\left\langle s,(A\Delta)^{2}\right\rangle +\frac{MC^{2}}{2}\|A\Delta\|_{\infty}^{2}\ge13M$.

\end{invariant}

The output of an algorithm satisfying Invariant \ref{inv:invariant1}
can be converted into a good approximate solution to Problem \ref{eq:residual-problem}.
We prove this formally in Lemma \ref{lem:solution-guarantee}. At
this point the major challenge is providing such an algorithm. We
show that we can achieve it by extending our approach for solving
$\ell_{\infty}$-regression from Section \ref{sec:Regression-with-Lewis-weight}.
The key difference in this setting is the presence of an additional
quadratic term, which makes the extension non-trivial and limits us
to achieving only a constant-factor approximation. Nonetheless, this
level of approximation suffices for our purposes.

The following lemma provides formal guarantees for the $\ressolver$,
and its full proof can be found in Section \ref{sec:analysis-QSC-alg}.
\begin{lem}
\label{lem:main-residual-solver-lemma}For an iterate $x,$ let $\Delta^{*}=\arg\max_{\|A\Delta\|_{\infty}\le1/C}\res_{x}(\Delta)$
and $\opt=\res_{x}(\Delta^{*})$. For $M$ such that $\opt\in(\frac{M}{2},M]$,
Algorithm \ref{alg:residual-solver} outputs $\widehat{\Delta}$ such
that $\|A\widehat{\Delta}\|_{\infty}\le\frac{1}{C}$ and $\res_{x}(\widehat{\Delta})\ge\frac{\opt}{20}$,
and makes $O(d^{1/3}\log n)$ calls to a linear system solver.
\end{lem}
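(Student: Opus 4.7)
My plan is a three-step argument. First, I show that every output of Algorithm \ref{alg:residual-solver} satisfies Invariant \ref{inv:invariant1} with respect to the surrogate problem (\ref{eq:new-residual-problem}). Second, I argue that under the assumption $\opt \in (M/2,M]$, the certification branch of the invariant cannot fire, so the algorithm must return a feasible primal $\Delta$ with $g^\top \Delta = -1$, $\|A\Delta\|_\infty \le 11/C$, and a controlled surrogate value; rescaling by $1/11$ then produces $\widehat{\Delta}$ with $\|A\widehat{\Delta}\|_\infty \le 1/C$ and, via Lemma \ref{lem:solution-guarantee} together with $\opt > M/2$, with $\res_x(\widehat{\Delta}) \ge \opt/20$. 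Third, I bound the number of while-loop iterations by $O(d^{1/3}\log n)$ via a telescoping electrical-energy argument mirroring the analysis of Algorithm \ref{alg:Linf-regression}.

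For the invariant, the three return paths are handled separately. When Line 6 or Line 12 returns a primal $\Delta$, optimality of the weighted least-squares minimizer with resistances $p^{(t)} = 2(\|w\|_1+d)s + \frac{MC^2}{2}r^{(t)}$, combined with the feasibility check $\|A\Delta\|_\infty \le 11/C$, gives the required upper bound on $\langle s, (A\Delta)^2\rangle$ relative to the minimum of the regularized problem. When Line 5 (or the terminal dual return) fires, the normalized vector $r^{(t)}/\|r^{(t)}\|_1$ is a valid dual certificate for $\|A\Delta\|_\infty^2$ since its entries sum to $1$, so weak duality applied to $\langle s + \frac{MC^2}{2}\frac{r^{(t)}}{\|r^{(t)}\|_1},(A\Delta^{(t)})^2\rangle \ge 13M$ lower bounds the minimum of (\ref{eq:new-residual-problem}) by $13M$. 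To rule out this certification branch under $\opt \in (M/2,M]$, I would scale the residual-problem optimum $\Delta^*$ to satisfy $g^\top \Delta = -1$ (possible since $\nabla f(x)^\top(A\Delta^*) > M/2$), verify that the scaled $\Delta$ has $\|A\Delta\|_\infty \le 2/C$ and $\langle s,(A\Delta)^2\rangle = O(M)$, and conclude that the minimum of (\ref{eq:new-residual-problem}) is strictly below $13M$; thus the certification branch cannot fire and the algorithm must produce a primal output.

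For the iteration bound, I would define the electrical energy $\cE_t := \min_{g^\top \Delta=-1}\langle p^{(t)},(A\Delta)^2\rangle$ and prove a telescoping invariant of the form $\cE_{t+1}-\cE_t \ge \Omega(MC^2)\,(\|r^{(t+1)}\|_1 - \|r^{(t)}\|_1)$. In Case 1, a single coordinate of $r^{(t)}$ is incremented by $1$ while $\|A\Delta^{(t)}\|_\infty > 11\,d^{1/3}/C$, so Lemma \ref{lem:energy-lewis-weight} applied to the $r^{(t)}$-weighted portion of $p^{(t)}$ gives an energy jump of order $MC^2\cdot d^{2/3}$ per unit increment in $\|r\|_1$; since $\|r^{(t)}\|_1$ may grow by at most $\|w\|_1+d = O(d)$ before the while loop exits, Case 1 can fire at most $O(d^{1/3})$ times before $\cE_t/\|r^{(t)}\|_1$ crosses the Line 5 threshold. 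In Case 2, the multiplicative reweighting of coordinates with $(A\Delta^{(t)})_j^2 \ge 100/C^2$ combined with Lemma \ref{lem:energy-usual} maintains the same telescoping invariant, while a standard multiplicative-weights argument shows that the averaged iterate $v^{(t')}/t'$ must satisfy the width constraint $\|A(v^{(t')}/t')\|_\infty \le 11/C$ within $O(d^{1/3}\log n)$ iterations: coordinates on which $\Delta^{(t)}$ is large get exponentially upweighted, forcing subsequent $\Delta^{(t)}$ to use complementary coordinates so that averaging produces the desired low-width iterate.

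The main technical obstacle, relative to the pure $\ell_\infty$-regression analysis of Section \ref{sec:Regression-with-Lewis-weight}, is the extra quadratic term $\langle s,(A\Delta)^2\rangle$ in (\ref{eq:new-residual-problem}), which breaks the homogeneity exploited there. I would handle it by noting that the coefficient $2(\|w\|_1+d)$ in $p^{(t)}$ is calibrated so that the $s$-contribution dominates the $r^{(t)}$-contribution throughout the loop (using $\|r^{(t)}\|_1 \le 2(\|w\|_1+d)$), which preserves the quantitative form of the Lewis-weight energy lemmas and lets the telescoping argument carry over essentially unchanged, with the $s$-dependent constants absorbed into the final $\opt/20$ approximation factor.
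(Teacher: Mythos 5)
Your proposal follows the paper's own proof route essentially step for step: establish Invariant~\ref{inv:invariant1} via the weighted least-squares optimality argument (for the primal returns) and a telescoping electrical-energy bound together with weak duality (for the dual returns), convert the output to an $\opt/20$-approximation by the scaling argument of Lemma~\ref{lem:solution-guarantee} under $\opt\in(M/2,M]$, and bound iterations by applying Lemma~\ref{lem:energy-lewis-weight} in the high-width case and a multiplicative-weights averaging argument in the low-width case, matching Lemmas~\ref{lem:quadratic-term-guarantee}--\ref{lem:residual-solver-runtime}. The one slip is the per-unit energy-increase rate you write as $\Omega(MC^{2})$ (and $MC^{2}d^{2/3}$ in Case~1): the $C^{2}$ in $p^{(t)}$ is cancelled by the $\Theta(1/C^{2})$ magnitude of the relevant entries of $(A\Delta^{(t)})^{2}$, so the correct rates are $\Omega(M)$ and $\Theta(Md^{2/3})$ respectively (the paper gets $26M$ and $26Md^{2/3}$); this bookkeeping does not affect the final $O(d^{1/3}\log n)$ count.
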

Having provided the description of the $\ressolver$ and its guarantee,
we can now describe and analyze the main routine shown in Algorithm
\ref{alg:QSC-main}. The algorithm starts with a solution $x^{(0)}$
and iteratively updates it. In each iteration, it performs a binary
search for the objective of residual problem \ref{eq:residual-problem}
using the two for-loops in Line \ref{forloop-nu} and Line \ref{forloop-M}.
With each guess for this objective, the algorithm uses the $\ressolver$
(Algorithm \ref{alg:residual-solver}) to find a solution. Algorithm
\ref{alg:QSC-main} finally uses the best solution among the ones
that are found to update the iterate. To analyze the algorithm, we
first use the following fact from \cite{adil2021unifying}.
\begin{lem}[Lemma 4.3, 4.4 \cite{adil2021unifying}]
\label{lem:bin-search-opt}If $h(x^{(t)})-h(x^{*})\in(\frac{\nu}{2},\nu]$,
then $\opt\in(\frac{\nu}{8CR},e^{2}\nu]$. Further, if $\opt\in(\frac{M}{2},M]$
then $(A\Delta^{*})^{\top}\nabla^{2}f(x)(A\Delta^{*})\le eM$.
\end{lem}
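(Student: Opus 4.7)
The plan is to translate the $C$-quasi-self-concordance of $f$ into a Hessian stability statement for $h$: whenever $\|A\Delta\|_\infty \le 1/C$, one has the PSD sandwich $e^{-1}\nabla^2 h(x) \preceq \nabla^2 h(x-\Delta) \preceq e\,\nabla^2 h(x)$. Plugging this into the exact second-order Taylor expansion of $h$ along $-\Delta$ yields a matching pair of quadratic upper and lower bounds on $h(x-\Delta)$ in terms of $\nabla h(x)$ and $\nabla^2 h(x)$. These play the role of smoothness and strong convexity, and together with ordinary convexity of $h$ they are the only ingredients needed.

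For the upper bound $\opt \le e^2\nu$, I fix any feasible $\Delta$ and apply the quadratic upper bound to the step $\Delta/e^2$, which is itself feasible. A short computation gives $h(x) - h(x-\Delta/e^2) \ge \res_x(\Delta)/e^2$, so $\res_x(\Delta) \le e^2 (h(x)-h(x^*)) \le e^2\nu$; taking the maximum over feasible $\Delta$ gives the bound.

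For the lower bound $\opt > \nu/(8CR)$, I use the test direction $\Delta_0 := x - x^*$. Since iterates decrease $h$ monotonically (by Lemma~\ref{lem:progress}), $x \in {\cal L}_0$, and Assumption~\ref{assu:a2} gives $\|A\Delta_0\|_\infty \le R$; hence $\alpha \Delta_0$ is feasible for every $\alpha \in (0, 1/(CR)]$. Writing $g := \nabla h(x)^\top \Delta_0$ and $q := \Delta_0^\top \nabla^2 h(x) \Delta_0$, convexity of $h$ gives $g \ge h(x)-h(x^*) > \nu/2$. The restriction $\alpha \mapsto \res_x(\alpha \Delta_0) = \alpha g - \alpha^2 q/e$ is a concave quadratic in $\alpha$ with unconstrained maximizer $\alpha^\star = eg/(2q)$. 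If $\alpha^\star \ge 1/(CR)$ (small-$q$ regime), the constrained maximum sits at $\alpha = 1/(CR)$ with value at least $g/(2CR)$; otherwise the optimum is the interior point $\alpha^\star$ with value $eg^2/(4q)$, and combining the global convexity inequality $h(x-\beta\Delta_0) \le (1-\beta)h(x) + \beta h(x^*)$ with the QSC quadratic \emph{lower} bound at $\beta = 1/(CR)$ produces $q \le 2eCR\,g$, hence $eg^2/(4q) \ge g/(8CR)$. Across both cases, $g > \nu/2$ yields $\opt = \Omega(\nu/(CR))$, matching the stated bound up to an absolute constant.

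For the second statement, concavity of $\res_x$ (its Hessian $-\tfrac{2}{e}\nabla^2 h(x)$ is $\preceq 0$) implies that $\alpha \mapsto \res_x(\alpha \Delta^*) = \alpha g_0 - \alpha^2 q_0/e$ is concave, where $g_0 := \nabla h(x)^\top \Delta^*$ and $q_0 := (A\Delta^*)^\top \nabla^2 f(x) (A\Delta^*)$. Since $\alpha \Delta^*$ is feasible for every $\alpha \in [0,1]$ and $\Delta^*$ maximizes $\res_x$ over the constraint set, scaling down cannot improve the objective, so the unconstrained maximizer $\alpha^\star = eg_0/(2q_0)$ satisfies $\alpha^\star \ge 1$, i.e.\ $g_0 \ge 2q_0/e$. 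Substituting, $\opt = g_0 - q_0/e \ge q_0/e$, so $q_0 \le e\,\opt \le eM$. The main obstacle throughout is the lower-bound step: the quadratic quantity $q$ is tied to the long segment from $x$ to $x^*$, whose $\ell_\infty$-length may be as large as $R$, while QSC only gives tight Hessian control on $\ell_\infty$-balls of radius $1/C$; bridging these two scales is exactly what introduces the $CR$ factor, and it is what makes the single test vector $\alpha(x-x^*)$, scaled to the boundary of the trust region, the natural object to analyze.
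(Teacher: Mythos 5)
The paper does not actually give a proof of this lemma --- it is imported verbatim from Lemmas 4.3 and 4.4 of \cite{adil2021unifying} --- so there is no internal argument to compare your proof against line by line. That said, your proof is essentially the standard argument for such trust-region lemmas, and it is correct modulo the precise bookkeeping of absolute constants (which you acknowledge). All three pieces are sound: the upper bound $\opt\le e^{2}\nu$ follows by applying the Hessian-stability quadratic upper bound to the shrunken step $\Delta/e^{2}$; the lower bound $\opt=\Omega(\nu/(CR))$ comes from restricting to the ray through $\Delta_{0}=x-x^{*}$, scaled to the trust-region boundary, with the key inequality $q\lesssim eCRg$ obtained by sandwiching global convexity against the local quadratic lower bound at scale $1/(CR)$; and the Hessian bound on $\Delta^{*}$ follows from the observation that scaling down an optimal $\Delta^{*}$ cannot improve $\res_{x}$, which forces $g_{0}\ge 2q_{0}/e$ and hence $q_{0}\le e\,\opt\le eM$.

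Two small points worth flagging. First, depending on whether one keeps the $\tfrac12$ in the integral-form Taylor remainder, your Case-2 lower bound lands at $\nu/(16CR)$ rather than $\nu/(8CR)$; this is harmless because the lemma's constants are anyway inherited from \cite{adil2021unifying}, but you should be explicit that the exact constant depends on the precise form of the Hessian-stability quadratic bounds adopted. Second, the lower-bound argument silently uses $CR\ge 1$ when it assumes the convex-combination point $x-\Delta_{0}/(CR)$ lies on the segment between $x$ and $x^{*}$; this is a standard and harmless normalization (if $CR<1$ then $x^{*}$ is already inside the trust region and the residual trivially captures essentially all of the suboptimality gap), but it deserves a one-line remark. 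You also correctly note that $R$ bounds $\|Ax^{(t)}-Ax^{*}\|_{\infty}$ for all iterates via monotone decrease of $h$, which is needed to make the test direction feasible at every outer iteration, not just $t=0$.
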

This lemma guarantees that the steps executed in the two for-loops
in Algorithm \ref{alg:QSC-main} are sufficient to find a sufficiently
close guess for the residual problem objective. In total, the number
of steps required to find $M$ such that $\opt\in(\frac{M}{2},M]$
is $O\left(\log\left(CR\right)\log\left(\frac{h(x^{(t)})-B}{\epsilon}\right)\right).$

Finally, combining Lemma \ref{lem:progress}, Lemma \ref{lem:main-residual-solver-lemma}
with the binary search procedure in Line \ref{forloop-nu}-\ref{forloop-M}
(Lemma \ref{lem:bin-search-opt}), we can conclude that the total
number iterations of Algorithm \ref{alg:QSC-main} to find an $\epsilon$-additive
solution is $O\big(CR\log\left(CR\right)\log^{2}\left(\frac{h(x^{(0)})-h(x^{*})}{\epsilon}\right)\big)$,
each of which uses $O\left(d^{1/3}\log n\right)$ linear system solves.
This concludes the proof for Theorem \ref{thm:QSC-main-theorem}.

\section{Experiments}

\begin{figure*}

\subfloat{\includegraphics[width=0.25\textwidth]{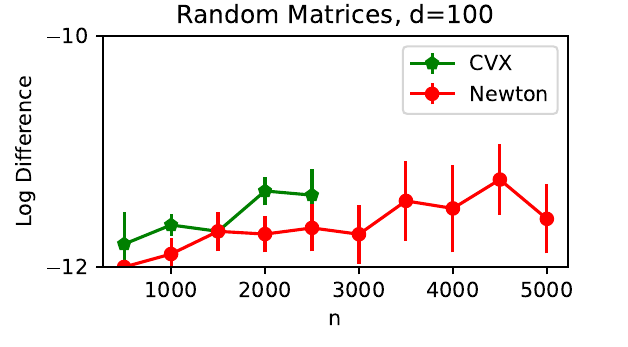}}\subfloat{\includegraphics[width=0.25\textwidth]{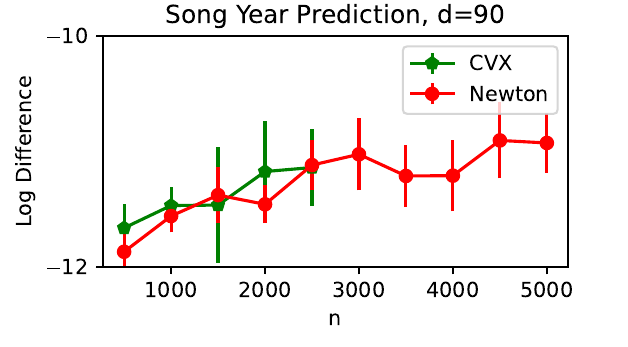}}\subfloat{\includegraphics[width=0.25\textwidth]{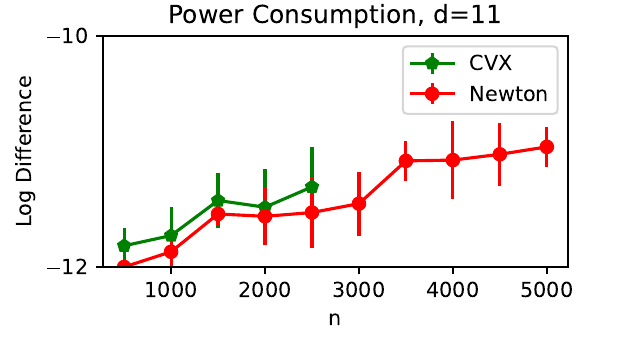}}\subfloat{\includegraphics[width=0.25\textwidth]{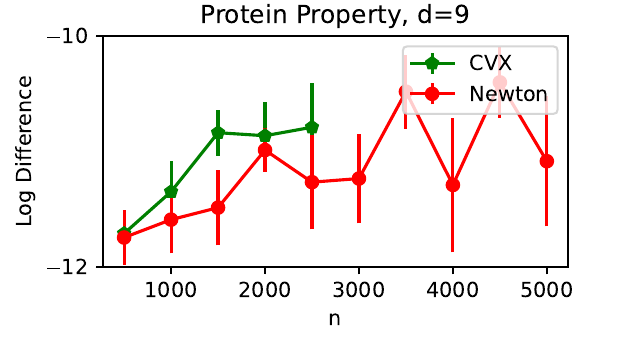}}

\subfloat{\includegraphics[width=0.25\textwidth]{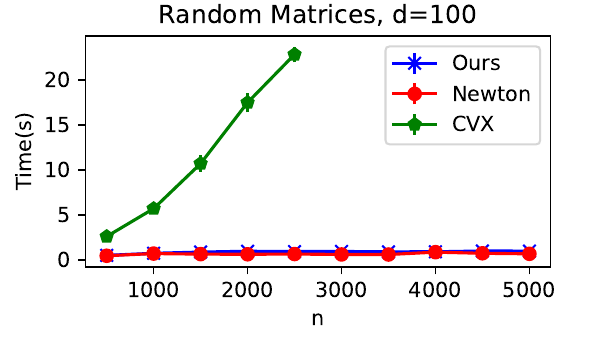}}\subfloat{\includegraphics[width=0.25\textwidth]{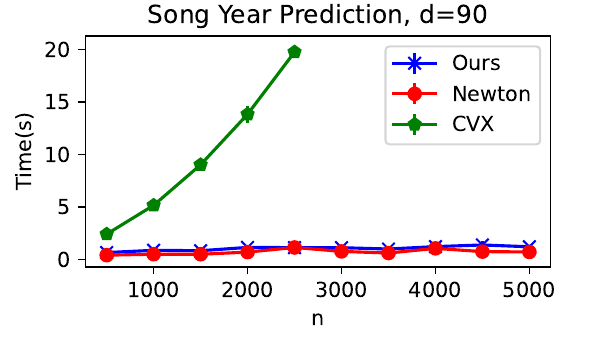}}\subfloat{\includegraphics[width=0.25\textwidth]{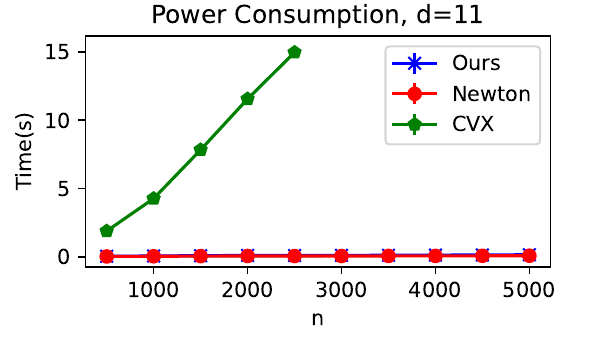}}\subfloat{\includegraphics[width=0.25\textwidth]{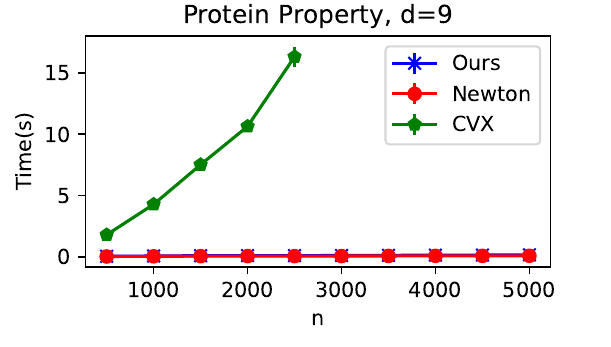}}

\caption{Runtime (in seconds) on random matrices and three real-world datasets
when $p=8$ and $\epsilon=10^{-10}$. We run each each experiment
5 times and report the mean and standard deviation of the runtime.
The first row shows the absolute difference in the objectives returned
by CVX and Newton's method against our algorithm on the $\log$ scale.}
\label{plot:plot}
\end{figure*}

\begin{table}
\caption{Performance of our algorithm on the large instances. In the last row,
we show the gap in the objective value between Newton's method and
our algorithm. }
\label{tab:real-world-info}

\begin{tabular}{|c|>{\centering}p{0.17\columnwidth}|>{\centering}p{0.19\columnwidth}|>{\centering}p{0.18\columnwidth}|>{\centering}p{0.18\columnwidth}|}
\hline 
Dataset & Random Matrices & Song Year Prediction \cite{year_prediction_msd_203} & Consumption of Power \cite{individual_household_electric_power_consumption_235} & Protein Property

\cite{physicochemical_properties_of_protein_tertiary_structure_265}\tabularnewline
\hline 
\hline 
Size & $100000\times100$ & $463811\times90$ & $1844352\times11$ & $41157\times9$\tabularnewline
\hline 
Newton (s) & 0.5 & 2.7 & 3.0 & 0.2\tabularnewline
\hline 
Ours (s) & 1.3 & 7.5 & 6.5 & 0.3\tabularnewline
\hline 
Gap & $-3.6\times10^{-12}$ & $2.6\times10^{-9}$ & $4.5\times10^{-9}$ & $-7.1\times10^{-11}$\tabularnewline
\hline 
\end{tabular}
\end{table}

We test our algorithm on $\ell_{2}$-regularized $\ell_{p}$-regression
problems: $\min_{x}\left\Vert Ax-b\right\Vert _{p}^{p}+\mu\left\Vert Ax-b\right\Vert _{2}^{2}$.
For $p\ge3$, the function $\left|x\right|^{p}+\mu x^{2}$ is $C$-QSC
for $C=p\mu^{-1/(p-2)}$. 

\textbf{Baseline}. We compare our algorithm runtime against CVX and
the Newton's method in \cite{karimireddy2018global}. For the Newton's
method, we use a line search to set the step size in each iteration.

\textbf{Problems} \textbf{and datasets}. We study the $\ell_{p}+\ell_{2}$
regression problem in two settings: 1) Random matrix $A,b$: The entries
of $A,b$ are generated uniformly at random between $0$ and $1$,
the second dimension ($d$) of $A$ is fixed to $100$; 2) On real
world datasets: we use for real-world datasets available for regression
tasks on UCI repository. The dataset sizes are reported in Table \ref{tab:real-world-info}.
Since CVX slows down significantly on larger instances, we only randomly
pick up to 2500 in each instance. For our algorithm and the Newton's
method, we also measure the time they run on larger instances and
the entire datasets. In all experiments, we use $p=8$ and precision
$\epsilon=10^{-10}$, $\mu=1$. 

\textbf{Correctness}. To verify the correctness of our algorithms,
we assume that CVX gives a solution with high precision. We plot the
absolute difference between the objectives outputted by CVX and Newton's
method and the objective by our algorithm. In all cases, the difference
is within the margin of error ($\epsilon=10^{-10}$), indicating that
our algorithm outputs high-precision solutions.

\textbf{Performance}. Implementations were done on MATLAB 2024a on
a MacBook Pro M2/16GB RAM. The performance of all three algorithms
is reported in Figure \ref{plot:plot}. On all small instances, our
algorithm and the Newton's method have comparable runtime and are
both significantly faster than CVX. 

We further compare the performance of our algorithm and the Newton's
on large instances (reported in Table \ref{tab:real-world-info}).
Both algorithms are efficient, and are even faster than the time CVX
runs on small instances. However, there is a gap in the runtime between
our algorithm and Newton's method. We note that Newton's method has
been shown to perform very well in practice. Nevertheless, it comes
with a significantly weaker theoretical guarantee \cite{boyd2004convex}.
Our algorithm, on the other hand, provides a stronger theoretical
convergence rate and, as a proof of concept, gives a close comparable
performance.

\bibliographystyle{alpha}
\bibliography{ref}

\appendix

\section{Energy Lemmas}
\begin{lem}
\label{lem:energy-usual} Given $r,r'\in\R^{n}$ and $r'\ge r$. Let
$x=\arg\min_{x:g^{\top}x=-1}\langle r,(Ax)^{2}\rangle$, then
\begin{align*}
\cE(r')-\cE(r) & \ge\sum_{i=1}^{n}(Ax)_{i}^{2}r_{i}\big(1-\frac{r_{i}}{r_{i}'}\big).
\end{align*}
\end{lem}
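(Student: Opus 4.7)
\textbf{Proof proposal for Lemma \ref{lem:energy-usual}.}

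The plan is to lower bound $\cE(r')$ via weighted Cauchy--Schwarz, exploiting the KKT conditions of the minimizer $x$ for $r$. First I would record the KKT identity: since $x$ solves $\min_{g^\top y=-1}\langle r,(Ay)^2\rangle$, there is a multiplier $\lambda$ with $A^\top(r\odot Ax)=-\tfrac{\lambda}{2}g$. Pairing with $x$ and using $g^\top x=-1$ gives $\lambda/2=\cE(r)$, so $A^\top(r\odot Ax)=-\cE(r)\,g$. Consequently, for \emph{any} $y$ with $g^\top y=-1$,
\begin{equation*}
\sum_i r_i(Ax)_i(Ay)_i \;=\; \langle A^\top(r\odot Ax),\,y\rangle \;=\; -\cE(r)\,g^\top y \;=\; \cE(r).
\end{equation*}

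Next I would apply weighted Cauchy--Schwarz with weights $r_i/\sqrt{r'_i}$ versus $\sqrt{r'_i}$:
\begin{equation*}
\cE(r) \;=\; \sum_i \Bigl(\tfrac{r_i}{\sqrt{r'_i}}(Ax)_i\Bigr)\bigl(\sqrt{r'_i}(Ay)_i\bigr)
\;\le\; \sqrt{\sum_i \tfrac{r_i^2}{r'_i}(Ax)_i^2}\;\sqrt{\sum_i r'_i(Ay)_i^2}.
\end{equation*}
Squaring and taking the minimum over feasible $y$ on the right yields $\cE(r)^2 \le S\cdot \cE(r')$, where $S \coloneqq \sum_i \tfrac{r_i^2}{r'_i}(Ax)_i^2$. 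Rearranging,
\begin{equation*}
\cE(r')-\cE(r) \;\ge\; \frac{\cE(r)^2}{S}-\cE(r) \;=\; \frac{\cE(r)}{S}\bigl(\cE(r)-S\bigr).
\end{equation*}

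To finish, I would observe the two elementary facts: (i) $\cE(r)-S=\sum_i r_i(Ax)_i^2(1-r_i/r'_i)\ge 0$, since $r'\ge r$ implies $r_i/r'_i\le 1$; and (ii) for the same reason $S\le\sum_i r_i(Ax)_i^2=\cE(r)$, hence $\cE(r)/S\ge 1$. Combining these with the displayed inequality gives
\begin{equation*}
\cE(r')-\cE(r) \;\ge\; \cE(r)-S \;=\; \sum_i (Ax)_i^2\,r_i\Bigl(1-\tfrac{r_i}{r'_i}\Bigr),
\end{equation*}
which is exactly the claim. The main obstacle, and the only non-routine step, is choosing the right weighting in Cauchy--Schwarz so that (a) the cross-term telescopes to the constant $\cE(r)$ via the KKT identity, and (b) the resulting upper bound on $\cE(r)^2$ factors through precisely the quantity $S$ whose deficit from $\cE(r)$ is the RHS of the lemma; once this pairing is spotted, the remainder is a short algebraic manipulation using $S\le \cE(r)$.
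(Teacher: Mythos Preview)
Your proof is correct. The paper does not supply its own proof of this lemma, deferring instead to \cite{ene2019improved} and \cite{adil2021unifying}; your KKT-plus-weighted-Cauchy--Schwarz argument is a clean, self-contained derivation that matches the standard treatment in that literature, so there is nothing to add beyond noting that the degenerate case $S=0$ (which forces $\cE(r)=0$ and makes both sides vanish) can be handled separately.
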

The proof for this lemma can be found in prior works such as \cite{ene2019improved}
(for a special case) and \cite{adil2021unifying} (Lemma C.1). 

Via $\ell_{\infty}$ Lewis weight overestimates, \cite{jambulapati2022improved}
also show the following
\begin{lem}[Lemma 3.6 \cite{jambulapati2022improved}]
\label{lem:energy-lewis-weight}Let $w$ be an $\ell_{\infty}$ Lewis
weight overestimates vector for matrix $A$. For $r\ge w$, let $x=\arg\min_{x:g^{\top}x=-1}\langle r^{(t)},(Ax)^{2}\rangle$.
Then for $v\in\R_{\ge0}^{n}$ such that $\|v\|_{1}\le1$
\begin{align*}
\cE(r+v)-\cE(r) & \ge\frac{1}{2}\sum v_{i}(Ax)_{i}^{2}.
\end{align*}
\end{lem}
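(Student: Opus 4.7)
The plan is to use the closed-form expression for $\cE(r)$ in terms of a linear system involving $B=A^\top\diag(r)A$, apply the Sherman--Morrison--Woodbury identity to handle the perturbation $r\to r+v$, and then exploit the $\ell_\infty$ Lewis weight overestimate property together with $\|v\|_1\le 1$ to gain the factor of two.

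First I would solve $x=\arg\min_{g^\top x=-1}\langle r,(Ax)^2\rangle$ in closed form via Lagrange multipliers, obtaining
$$x = -\frac{B^{-1}g}{g^\top B^{-1}g}, \qquad \cE(r)=\frac{1}{g^\top B^{-1} g}, \qquad (Ax)_i = -\cE(r)\,a_i^\top B^{-1} g\,.$$
Next, for $B_v=B+A^\top \diag(v)A$, I would factor $\cE(r+v)-\cE(r)=\cE(r)\,\cE(r+v)\cdot\bigl(g^\top B^{-1}g - g^\top B_v^{-1}g\bigr)$ and note that $\cE(r+v)\ge\cE(r)$ follows from $B_v\succeq B$. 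So it suffices to show
$$g^\top B^{-1} g - g^\top B_v^{-1} g \;\ge\; \frac{1}{2\cE(r)^2}\sum_i v_i (Ax)_i^2\,.$$

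To analyze the left-hand side, I would apply the Woodbury identity to $B_v = B + M^\top M$ with $M=\diag(v)^{1/2}A\in\R^{n\times d}$, giving
$$g^\top B^{-1} g - g^\top B_v^{-1} g \;=\; y^\top(I+MB^{-1}M^\top)^{-1} y,\qquad y = MB^{-1}g,\quad y_i=\sqrt{v_i}\,a_i^\top B^{-1}g\,.$$
The statement then reduces to the operator-norm inequality $\|MB^{-1}M^\top\|_{\mathrm{op}}\le 1$, because this would yield $(I+MB^{-1}M^\top)^{-1}\succeq \tfrac12 I$ and hence $y^\top(I+MB^{-1}M^\top)^{-1} y\ge \tfrac12 \sum_i v_i(a_i^\top B^{-1}g)^2$; substituting back via $(a_i^\top B^{-1} g)^2=(Ax)_i^2/\cE(r)^2$ then produces exactly the claimed bound.

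The main obstacle is establishing this operator-norm bound, and this is where both the Lewis weight hypothesis and the constraint $\|v\|_1\le 1$ are used. Since $MB^{-1}M^\top$ is PSD, its operator norm is at most its trace, and
$$\mathrm{tr}(MB^{-1}M^\top) = \sum_i v_i\, a_i^\top B^{-1} a_i\,.$$
The overestimate condition $w_i\ge\sigma(\diag(w)^{1/2}A)_i$ rewrites as $a_i^\top(A^\top\diag(w)A)^{-1}a_i\le 1$; since $r\ge w$, the Loewner monotonicity of matrix inversion gives $a_i^\top B^{-1} a_i\le a_i^\top(A^\top\diag(w)A)^{-1}a_i\le 1$ for every $i$. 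Therefore $\mathrm{tr}(MB^{-1}M^\top)\le \sum_i v_i = \|v\|_1\le 1$, which completes the argument. The interesting feature is that without the Lewis weight control, one would only be able to bound a single diagonal entry of $MB^{-1}M^\top$ in terms of one $v_i$, not the whole spectral norm by $\|v\|_1$; it is precisely the uniform leverage-score bound afforded by $w$ that makes the trace-to-$\ell_1$ reduction tight enough to preserve the factor $\tfrac12$.
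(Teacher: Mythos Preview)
The paper does not supply its own proof of this lemma; it is quoted verbatim as Lemma~3.6 of \cite{jambulapati2022improved} and used as a black box. Your argument is correct and is in fact the standard route to this inequality: the closed-form energy $\cE(r)=1/(g^\top B^{-1}g)$, the Woodbury expansion of $B_v^{-1}$, the reduction to $\|MB^{-1}M^\top\|_{\mathrm{op}}\le 1$, and the trace bound via the overestimate property $a_i^\top(A^\top\diag(w)A)^{-1}a_i\le 1$ together with $\|v\|_1\le 1$ are exactly the ingredients one expects. One small caveat: dividing the overestimate inequality $w_i\ge w_i\,a_i^\top(A^\top\diag(w)A)^{-1}a_i$ by $w_i$ tacitly assumes $w_i>0$; this is harmless in the paper's setting because the fixed-point iteration of \cite{cohen2019near} produces strictly positive overestimates, but you may want to state that assumption explicitly.
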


\section{Analysis of the $\ell_{\infty}$-Regression Algorithm\protect\label{sec:analysis-regression}}

In this section, we give the analysis of the main subroutine of our
$\ell_{\infty}$-regression algorithm, shown in Algorithm \ref{alg:Linf-regression}
.

\subsection{Correctness}

Now we will show the correctness of our algorithm. That is, Algorithm
\ref{alg:Linf-regression} produces a primal solution $x$ such that
$g^{\top}x=-1$ and $x$ satisfies $\|Ax\|_{\infty}\le(1+\epsilon)M$
or a dual solution $r$ with $\frac{\cE(r)}{\|r\|_{1}}\ge\big(\frac{M}{1+\epsilon}\big)^{2}$.

First, we can see that if Algorithm \ref{alg:Linf-regression} outputs
a primal solution $x$ in Line 10 and Line 17, we immediately have
$\|Ax\|_{\infty}\le(1+\epsilon)M$. Therefore, we only have to show
guarantees for the dual solution. As hinted, we first show that Algorithm
\ref{alg:Linf-regression} maintains invariant \ref{eq:invariant}
in the following lemma.
\begin{lem}
\label{lem:Linf-invariant}Algorithm \ref{alg:Linf-regression} maintains
that for all $t\ge0$,
\begin{align*}
\frac{\cE(r^{(t+1)})-\cE(r^{(t)})}{\|r^{(t+1)}\|_{1}-\|r^{(t)}\|_{1}} & \ge M^{2}.
\end{align*}
\end{lem}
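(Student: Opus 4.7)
The invariant amounts to showing that in each iteration, the incremental energy gain dominates the incremental growth in $\ell_1$ mass by a factor of $M^2$. The algorithm splits into two regimes, and I would handle them by appealing to the two energy-increase lemmas (Lemma \ref{lem:energy-lewis-weight} for Case 1, Lemma \ref{lem:energy-usual} for Case 2), which are already tailored to exactly these two styles of update. Before invoking either, I would first record the obvious monotonicity fact $r^{(t)} \ge w$ for all $t$: by initialization $r^{(0)} = w + d/n \ge w$, and both updates are non-decreasing in every coordinate (in Case 2 the multiplicative factor $(Ax^{(t)})_j^2/M^2$ is at least $1+\epsilon$ whenever the coordinate is touched). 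This is the hypothesis needed to legally apply Lemma \ref{lem:energy-lewis-weight}.

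\textbf{Case 1 (high width).} Here a single coordinate $i$ is bumped by $+1$, so $\|r^{(t+1)}\|_1 - \|r^{(t)}\|_1 = 1$. Taking $v = e_i$ in Lemma \ref{lem:energy-lewis-weight},
\begin{align*}
\cE(r^{(t+1)}) - \cE(r^{(t)}) \;\ge\; \tfrac{1}{2}(Ax^{(t)})_i^2 \;=\; \tfrac{1}{2}\|Ax^{(t)}\|_\infty^2 \;>\; \tfrac{1}{2}S^2 \;=\; \tfrac{1}{2}d^{2/3}M^2,
\end{align*}
which is at least $M^2$ provided $d \ge 3$ (the case of constant $d$ can be dismissed since then the problem is trivial). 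This yields the desired ratio.

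\textbf{Case 2 (low width).} Let $J = \{j : (Ax^{(t)})_j^2 \ge (1+\epsilon)M^2\}$ denote the set of coordinates that get reweighted. For $j \in J$ the update is $r_j^{(t+1)} = r_j^{(t)} (Ax^{(t)})_j^2/M^2$, so
\begin{align*}
r_j^{(t)}\Bigl(1 - \frac{r_j^{(t)}}{r_j^{(t+1)}}\Bigr) \;=\; r_j^{(t)}\Bigl(1 - \frac{M^2}{(Ax^{(t)})_j^2}\Bigr),
\end{align*}
and plugging into Lemma \ref{lem:energy-usual} gives
\begin{align*}
\cE(r^{(t+1)}) - \cE(r^{(t)}) \;\ge\; \sum_{j\in J} r_j^{(t)}\bigl((Ax^{(t)})_j^2 - M^2\bigr) \;=\; M^2 \sum_{j\in J}(r_j^{(t+1)} - r_j^{(t)}) \;=\; M^2\bigl(\|r^{(t+1)}\|_1 - \|r^{(t)}\|_1\bigr),
\end{align*}
where the middle equality is the point of the chosen update rule: the multiplicative reweighting was precisely engineered so that $r_j^{(t)}((Ax^{(t)})_j^2 - M^2) = M^2(r_j^{(t+1)} - r_j^{(t)})$. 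Coordinates outside $J$ contribute zero on both sides.

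\textbf{Main obstacle.} The content of the lemma is really the algebraic matching in Case 2, which is essentially forced by the definition of the update; there is no real technical difficulty once the energy-increase lemmas are in hand. The only subtlety worth flagging is verifying the hypothesis $r^{(t)} \ge w$ throughout the run so that Lemma \ref{lem:energy-lewis-weight} applies in Case 1 — this is where the initialization $r^{(0)} = w + d/n$ (rather than a uniform vector) is crucial and where our algorithm departs from the framework of \cite{ene2019improved}.
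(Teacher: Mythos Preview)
Your proof is correct and follows essentially the same approach as the paper's: both split on the width threshold $S$, invoke Lemma \ref{lem:energy-lewis-weight} for the single-coordinate bump in Case 1 and Lemma \ref{lem:energy-usual} for the multiplicative reweighting in Case 2, with the same algebraic identity $r_j^{(t)}\bigl((Ax^{(t)})_j^2 - M^2\bigr) = M^2\bigl(r_j^{(t+1)} - r_j^{(t)}\bigr)$ doing the work in the second case. Your version is actually a bit more careful than the paper's in explicitly recording why $r^{(t)} \ge w$ persists and in flagging the (harmless) implicit assumption $d^{2/3} \ge 2$.
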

\begin{proof}
When $\|Ax\|_{\infty}\ge S$, we have $r_{j}^{(t+1)}=\begin{cases}
r_{j}^{(t)} & j\neq i\\
r_{j}^{(t)}+1 & j=i
\end{cases}$, for a coordinate $i$ such that $|(Ax^{(t)})_{i}|=\left\Vert Ax^{(t)}\right\Vert _{\infty}$.
Let $v=\one_{i}$ be the indicator vector for coordinate $i$. We
have $r^{(t+1)}=r^{(t)}+v$ and $\|v\|_{1}=1$. Since $r^{(t)}\ge r^{(0)}>w$,
we can apply Lemma \ref{lem:energy-lewis-weight} and have 
\begin{align*}
\frac{\cE(r^{(t+1)})-\cE(r^{(t)})}{\|r^{(t+1)}\|_{1}-\|r^{(t)}\|_{1}} & \ge\frac{\frac{1}{2}v_{i}(Ax^{(t)})_{i}^{2}}{v_{i}}\ge\frac{S^{2}}{2}=\frac{d^{2/3}M^{2}}{2}\ge M^{2}.
\end{align*}
When $\|Ax\|_{\infty}<S$, for each coordinate $j$ such that $r_{j}^{(t+1)}>r_{j}^{(t)}$
we have $r_{j}^{(t+1)}=r_{j}^{(t)}\frac{(Ax^{(t)})_{j}^{2}}{M^{2}}$.
Hence,
\begin{align*}
\frac{(Ax^{(t)})_{j}^{2}r_{j}^{(t)}\big(1-\frac{r_{j}^{(t)}}{r_{j}^{(t+1)}}\big)}{r_{j}^{(t+1)}-r_{j}^{(t)}} & =\frac{(Ax^{(t)})_{j}^{2}r_{j}^{(t)}}{r_{j}^{(t+1)}}=M^{2}.
\end{align*}
 By Lemma \ref{lem:energy-usual}, 
\begin{align*}
\frac{\cE(r^{(t+1)})-\cE(r^{(t)})}{\|r^{(t+1)}\|_{1}-\|r^{(t)}\|_{1}} & \ge\frac{\sum_{j}(Ax^{(t)})_{j}^{2}r_{j}^{(t)}\big(1-\frac{r_{j}^{(t)}}{r_{j}^{(t+1)}}\big)}{\sum_{j}r_{j}^{(t+1)}-r_{j}^{(t)}}\ge M^{2}.
\end{align*}
\end{proof}
Lemma \ref{lem:Linf-invariant} allows us to show the quality of the
dual solution, if output. 
\begin{lem}
If Algorithm \ref{alg:Linf-regression} returns a dual solution $r^{(T)}$
then $\frac{\cE(r^{(T)})}{\|r^{(T)}\|_{1}}\ge\big(\frac{M}{1+\epsilon}\big)^{2}.$
\end{lem}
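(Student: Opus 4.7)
The plan is to case-split on the two exit points at which Algorithm \ref{alg:Linf-regression} can return a dual solution $r^{(T)}$: the early termination inside the while loop (Line 6), and the natural exit when the while-loop condition $\|r^{(t)}\|_{1}\le\|r^{(0)}\|_{1}/\epsilon$ fails (Line 18). The other two return statements output primal solutions and so fall outside the scope of the claim.

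For the early-exit case, the branch condition itself is essentially what we need. The returned $r^{(t)}$ satisfies $\langle r^{(t)},(Ax^{(t)})^{2}\rangle/\|r^{(t)}\|_{1}\ge(M/(1+\epsilon))^{2}$. Since $x^{(t)}=\arg\min_{g^{\top}x=-1}\langle r^{(t)},(Ax)^{2}\rangle$ is the primal electrical-energy minimizer, we have $\cE(r^{(t)})=\langle r^{(t)},(Ax^{(t)})^{2}\rangle$ by definition, so the inequality $\cE(r^{(t)})/\|r^{(t)}\|_{1}\ge(M/(1+\epsilon))^{2}$ is immediate.

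For the loop-termination case, I would invoke Lemma \ref{lem:Linf-invariant} telescopically. Summing the per-iteration inequality $\cE(r^{(t+1)})-\cE(r^{(t)})\ge M^{2}(\|r^{(t+1)}\|_{1}-\|r^{(t)}\|_{1})$ from $t=0$ to $T-1$ gives $\cE(r^{(T)})-\cE(r^{(0)})\ge M^{2}(\|r^{(T)}\|_{1}-\|r^{(0)}\|_{1})$. Using that $\cE$ is nonnegative (so we can drop $\cE(r^{(0)})$) and dividing by $\|r^{(T)}\|_{1}$ yields
\[
\frac{\cE(r^{(T)})}{\|r^{(T)}\|_{1}}\;\ge\;M^{2}\Bigl(1-\frac{\|r^{(0)}\|_{1}}{\|r^{(T)}\|_{1}}\Bigr).
\]
Now the while-loop exit condition is $\|r^{(T)}\|_{1}>\|r^{(0)}\|_{1}/\epsilon$, i.e., $\|r^{(0)}\|_{1}/\|r^{(T)}\|_{1}<\epsilon$, so the right-hand side is strictly greater than $M^{2}(1-\epsilon)$.

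To convert this to the stated target $(M/(1+\epsilon))^{2}$, I would finish with a routine algebraic check: $(1-\epsilon)(1+\epsilon)^{2}=1+\epsilon-\epsilon^{2}-\epsilon^{3}\ge 1$ whenever $\epsilon\le(\sqrt{5}-1)/2$, so $M^{2}(1-\epsilon)\ge M^{2}/(1+\epsilon)^{2}$, as needed (the threshold comfortably covers the intended small-$\epsilon$ regime). There is no real obstacle in this proof; it is essentially a bookkeeping corollary of the already-established invariant in Lemma \ref{lem:Linf-invariant}. The only delicate point is the final constant-matching step, which one could alternatively absorb by tightening the loop threshold from $\|r^{(0)}\|_{1}/\epsilon$ to a slightly larger multiple, but the cleaner path is the algebraic comparison above.
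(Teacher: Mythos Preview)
Your proof is correct and follows essentially the same approach as the paper: the same two-case split on the dual return points, the same telescoping of Lemma \ref{lem:Linf-invariant} for the loop-exit case, and the same final comparison $M^{2}(1-\epsilon)\ge(M/(1+\epsilon))^{2}$. Your explicit verification of the range of $\epsilon$ for which that last inequality holds is a welcome detail the paper omits.
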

\begin{proof}
The case when the Algorithm terminates in Line 8 immediately gives
us the claim. We only have to care about the case when the while-loop
terminates with $\|r^{(T)}\|_{1}>\frac{\|r^{(0)}\|_{1}}{\epsilon}$.
By Lemma \ref{lem:Linf-invariant}, we have that
\begin{align*}
\sum_{i=0}^{T-1}\cE(r^{(t+1)})-\cE(r^{(t)}) & \ge M^{2}\sum_{i=0}^{T-1}\|r^{(t+1)}\|_{1}-\|r^{(t)}\|_{1}
\end{align*}
leading to $\cE(r^{(T)})\ge M^{2}(\|r^{(T)}\|_{1}-\|r^{(0)}\|_{1})$.
Since $\|r^{(T)}\|_{1}>\frac{\|r^{(0)}\|_{1}}{\epsilon}$
\begin{align*}
\frac{\cE(r^{(T)})}{\|r^{(T)}\|_{1}} & \ge M^{2}\bigg(1-\frac{\|r^{(0)}\|_{1}}{\|r^{(T)}\|_{1}}\bigg)\ge M^{2}\left(1-\epsilon\right)\ge\big(\frac{M}{1+\epsilon}\big)^{2},
\end{align*}
as needed.
\end{proof}

\subsection{Runtime}

To bound the runtime of Algorithm \ref{alg:Linf-regression}, we also
splits the iteration by the width of $\|Ax^{(t)}\|_{\infty}$. Let
$T_{hi}$ and $T_{lo}$ be the iterations when $\|Ax^{(t)}\|_{\infty}>S$
and $\|Ax^{(t)}\|_{\infty}\le S$, respectively before the algorithm
returns a solution. We also abuse the notations and denote by $T_{hi}$
and $T_{lo}$ the numbers of such iterations, respectively. The following
lemmas bound $T_{hi}$ and $T_{lo}$.
\begin{lem}
$T_{hi}\le\frac{6d^{1/3}}{\epsilon}.$
\end{lem}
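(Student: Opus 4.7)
The plan is a simple amortized counting argument: each Case 1 iteration increments $\|r^{(t)}\|_1$ by exactly $1$ but, by the Lewis-weight energy lemma, must boost $\cE(r^{(t)})$ by a very large amount $\tfrac{d^{2/3}M^2}{2}$, so that too many such steps would push the ratio $\cE/\|r\|_1$ above the Line 6 threshold and terminate the algorithm.

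First I would quantify the per-iteration energy gain in Case 1. Because $r^{(0)}=w+d/n\ge w$ and all subsequent updates only increase $r$, we have $r^{(t)}\ge w$ at every iteration. Hence Lemma~\ref{lem:energy-lewis-weight} applies with $v=\one_i$ (the standard basis vector for the coordinate selected on Line 9, so $\|v\|_1=1$). In Case 1 that index $i$ satisfies $|(Ax^{(t)})_i|=\|Ax^{(t)}\|_\infty>S=d^{1/3}M$, so
\[
\cE(r^{(t+1)})-\cE(r^{(t)})\;\ge\;\tfrac{1}{2}(Ax^{(t)})_i^2\;\ge\;\tfrac{d^{2/3}M^2}{2}.
\]
Summing over all Case 1 iterations, and noting that Case 2 steps can only further increase $\cE$ (and $\cE(r^{(0)})\ge 0$), this gives $\cE(r^{(t)})\ge T_{hi}\cdot\tfrac{d^{2/3}M^2}{2}$ once all $T_{hi}$ Case 1 updates have been executed.

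Second, I would upper bound $\cE(r^{(t)})$ using the non-termination of Line 6 together with the while-loop cap on $\|r^{(t)}\|_1$. As long as the algorithm has not yet exited, the Line 6 check failed, which reads $\cE(r^{(t)})<(M/(1+\epsilon))^2\|r^{(t)}\|_1\le M^2\|r^{(t)}\|_1$. The while-loop condition forces $\|r^{(t)}\|_1\le\|r^{(0)}\|_1/\epsilon$, and for $r^{(0)}=w+d/n$ one has $\|r^{(0)}\|_1=\|w\|_1+d\le 3d$ by the defining bound $\|w\|_1\le 2d$ for $\ell_\infty$ Lewis weight overestimates. Combining, $\cE(r^{(t)})<3dM^2/\epsilon$, and pitting this against the lower bound from the previous step yields $T_{hi}\cdot\tfrac{d^{2/3}M^2}{2}\le\tfrac{3dM^2}{\epsilon}$, i.e.\ $T_{hi}\le\tfrac{6d^{1/3}}{\epsilon}$.

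I do not anticipate a genuine obstacle. The only mild care is at the boundary: the very last Case 1 update could in principle push $\|r^{(t)}\|_1$ just past $3d/\epsilon$ and trigger the while-loop exit. One handles this by invoking the upper bound at the state just before the $T_{hi}$-th Case 1 update (where both the Line 6 check and the while-loop condition provably held), incurring at most an additive loss of one Case 1 step, which the stated constant $6$ comfortably absorbs. The conceptual point making the argument work at all is the use of Lemma~\ref{lem:energy-lewis-weight} rather than the generic Lemma~\ref{lem:energy-usual}: the former delivers an energy gain of $\tfrac{1}{2}(Ax^{(t)})_i^2$ for a \emph{unit} increase in $\|r\|_1$, giving the crucial $d^{2/3}$ factor, whereas the latter would only produce $M^2$ per unit and yield the much weaker $n^{1/3}$-type bound from prior work.
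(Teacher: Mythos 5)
Your proof is correct and follows the same approach as the paper: apply Lemma \ref{lem:energy-lewis-weight} (valid since $r^{(t)}\ge r^{(0)}\ge w$) to get an energy gain of at least $\frac{d^{2/3}M^2}{2}$ per Case 1 step, then pit the accumulated lower bound $T_{hi}\cdot\frac{d^{2/3}M^2}{2}\le\cE(r^{(T)})$ against the upper bound $\cE(r^{(T)})\le (M/(1+\epsilon))^2\|r^{(T)}\|_1\le M^2\cdot\frac{3d}{\epsilon}$ coming from non-termination and the while-loop cap. Both your handling of the boundary iteration and your remark on why Lemma \ref{lem:energy-lewis-weight} (not Lemma \ref{lem:energy-usual}) is the essential ingredient match the paper's reasoning.
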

\begin{proof}
Again, for $t$ such that $\left\Vert Ax^{(t)}\right\Vert _{\infty}>S$,
by Lemma \ref{lem:energy-lewis-weight}, we have
\begin{align*}
\frac{\cE(r^{(t+1)})-\cE(r^{(t)})}{\|r^{(t+1)}\|_{1}-\|r^{(t)}\|_{1}} & \ge\frac{\frac{1}{2}v_{i}(Ax^{(t)})_{i}^{2}}{v_{i}}\ge\frac{S^{2}}{2}=\frac{d^{2/3}M^{2}}{2}.
\end{align*}
Since $\|r^{(t+1)}\|_{1}-\|r^{(t)}\|_{1}=1$, we get $\cE(r^{(t+1)})-\cE(r^{(t)})\ge\frac{d^{2/3}M^{2}}{2}$.
Therefore $\cE(r^{(T)})\ge T_{hi}\cdot\frac{d^{2/3}M^{2}}{2}$. Notice
that $\|r^{(T)}\|_{1}\le\frac{\|r^{(0)}\|_{1}}{\epsilon}\le\frac{3d}{\epsilon}$
and $\frac{\cE(r^{(T)})}{\|r^{(T)}\|_{1}}\le\big(\frac{M}{1+\epsilon}\big)^{2}$,
we get that $T_{hi}\le\frac{6d^{1/3}}{\epsilon}.$
\end{proof}

\begin{lem}
\label{lem:Linf-low-width-iter}$T_{lo}\le O\big(\big(\frac{1}{\epsilon^{2}}+\frac{d^{\frac{1}{3}}}{\epsilon\log d}\big)\log\frac{n}{\epsilon}\big).$
\end{lem}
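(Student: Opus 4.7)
The plan is to bound $T_{lo}$ by combining three ingredients: a global growth cap on $\|r^{(t)}\|_1$ from the outer while-loop criterion, a per-coordinate log-potential argument that limits how often each coordinate is multiplicatively updated, and a structural consequence of the non-termination $\|A s^{(t')}\|_\infty > (1+\epsilon)M t'$ that forces many iterations to update a single shared coordinate. First, once $\|r^{(t)}\|_1 > \|r^{(0)}\|_1/\epsilon$ the while loop exits, so $\|r^{(t)}\|_1 \le 3d/\epsilon$ throughout; combined with $r_j^{(0)} \ge d/n$, this gives $r_j^{(t)}/r_j^{(0)} \le 3n/\epsilon$ for every coordinate. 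Second, for each $j$ the multiplicative rule in Case 2 yields $r_j^{(T_{lo})}/r_j^{(0)} = \prod_{\tau \in K_j} U_j^{(\tau)}$, where $K_j$ is the set of Case 2 iterations with $j \in B_\tau$ and $U_j^{(\tau)} = (Ax^{(\tau)})_j^2/M^2 \in [1+\epsilon,\, d^{2/3}]$. Taking logarithms with $\log(1+\epsilon) \ge \epsilon/2$ yields $|K_j| \le O(\log(n/\epsilon)/\epsilon)$, while splitting $K_j$ into small-magnitude updates ($U_j = O(1)$, contributing $\Theta(\epsilon)$) and large-magnitude updates ($U_j$ close to $d^{2/3}$, contributing $\Theta(\log d)$) sharpens the count of the latter to $|K_j^{\mathrm{large}}| \le O(\log(n/\epsilon)/\log d)$.

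Next, I extract the structural inequality. For each Case 2 step $t'$ at which the algorithm does not return, $\|A s^{(t')}\|_\infty > (1+\epsilon) M t'$, and choosing $j^*$ to attain this max,
\[
(1+\epsilon) M t' < \left|\sum_{\tau \le t'} (Ax^{(\tau)})_{j^*}\right| \le \sum_{\tau \le t'} \left|(Ax^{(\tau)})_{j^*}\right|.
\]
I split the right-hand side by the threshold $(1+\epsilon/2) M$ and use the Case 2 width bound $|(Ax^{(\tau)})_{j^*}| \le \|Ax^{(\tau)}\|_\infty \le S = d^{1/3} M$. The below-threshold contribution is at most $(1+\epsilon/2) M t'$, so the above-threshold iterations, which are precisely those with $j^* \in B_\tau$ because then $|(Ax^{(\tau)})_{j^*}|^2 \ge (1+\epsilon) M^2$, must contribute at least $(\epsilon/2) M t'$. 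Since each such term is at most $d^{1/3} M$, the count satisfies $|K_{j^*} \cap [t']| \ge \Omega(\epsilon t'/d^{1/3})$.

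Setting $t' = T_{lo}$ and combining the coarse per-coordinate bound with this lower bound gives the first-pass estimate $T_{lo} = O(d^{1/3} \log(n/\epsilon)/\epsilon^2)$. To obtain the sharper additive form $O\bigl((1/\epsilon^2 + d^{1/3}/(\epsilon \log d))\log(n/\epsilon)\bigr)$, I would route the small-width iterations (those with $\|Ax^{(\tau)}\|_\infty/M$ bounded by a constant), in which all multiplicative updates satisfy $U_j = O(1)$, through the $|K_j| \le O(\log(n/\epsilon)/\epsilon)$ bound with the triangle-inequality splitting applied at width threshold $O(1)$, yielding the $(1/\epsilon^2)\log(n/\epsilon)$ term; and route the large-width iterations through the refined $|K_j^{\mathrm{large}}| \le O(\log(n/\epsilon)/\log d)$ bound together with the splitting at width $d^{1/3}$, yielding the $d^{1/3}/(\epsilon \log d)\cdot \log(n/\epsilon)$ term. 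The main obstacle I anticipate is precisely this routing step: arguing that the partial-sum deficit certified by the non-termination condition can be apportioned between small-width and large-width iterations cleanly enough for the two terms to appear additively rather than being composed multiplicatively.
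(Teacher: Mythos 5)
Your structural inequality — pick the coordinate $j^*$ achieving the max in $\|As^{(t')}\|_\infty > (1+\epsilon)Mt'$, split contributions by a threshold, and observe that the above-threshold iterations are exactly those where $j^*$ is multiplicatively updated — is the right starting point and matches the paper. Your global budget $r_{j^*}^{(T)}/r_{j^*}^{(0)} \le 3n/\epsilon$ also matches. But your way of combining them (coarse count $|K_{j^*}| \le O(\log(n/\epsilon)/\epsilon)$ versus the crude lower bound $|K_{j^*}| \ge \Omega(\epsilon T_{lo}/d^{1/3})$ that uses the worst-case width $d^{1/3}M$ for every above-threshold term) only yields $T_{lo} = O(d^{1/3}\log(n/\epsilon)/\epsilon^2)$, which can be a factor of $d^{1/3}$ worse than the stated bound. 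You correctly flag this, and the ``routing'' you sketch is the genuine gap: the deficit certified by $\|As^{(t')}\|_\infty > (1+\epsilon)Mt'$ is a single running partial sum that mixes small-magnitude and large-magnitude updates arbitrarily, so you cannot apportion it into two separate budgets after the fact without a new idea.

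The paper avoids this routing entirely. From the non-termination inequality it extracts the constraint
\[
\sum_{t\in T_{lo},\,\alpha_{i}^{(t)}>1}\sqrt{\alpha_{i}^{(t)}}\ \ge\ \tfrac{\epsilon}{2}\,T_{lo}\,,\qquad \sqrt{\alpha_{i}^{(t)}}\in\big[(1+\epsilon)^{1/2},\,d^{1/3}\big]\,,
\]
and then invokes Lemma A.1 of \cite{ene2019improved}, an extremal lemma stating that, subject to this sum constraint and these per-term bounds, the product $\prod_{t}\alpha_{i}^{(t)}$ is minimized at one of the two endpoints of the interval (all terms small or all terms large). This gives
\[
\prod_{t}\alpha_{i}^{(t)}\ \ge\ \min\Big\{(1+\epsilon)^{\epsilon T_{lo}/(2(1+\epsilon)^{1/2})},\ d^{\frac{2}{3}\cdot \epsilon T_{lo}/(2 d^{1/3})}\Big\}\,,
\]
and setting either expression $\le 3n/\epsilon$ yields the two terms $T_{lo}\lesssim \log(n/\epsilon)/\epsilon^{2}$ and $T_{lo}\lesssim d^{1/3}\log(n/\epsilon)/(\epsilon\log d)$, which appear additively because the bound holds under the \emph{minimum} of the two cases. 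This is precisely the concavity/majorization ingredient your plan is missing; without it, treating each term as worst-case width $d^{1/3}M$ cannot recover the extra $\log d$ in the second term or decouple it from the $1/\epsilon^2$ in the first.
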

\begin{proof}
Let $\alpha_{j}^{(t)}=\frac{r_{j}^{(t+1)}}{r_{j}^{(t)}}$. Before
the algorithm terminates, we have
\begin{align*}
\left\Vert A\frac{\sum_{t\in T_{lo}}x^{(t)}}{T_{lo}}\right\Vert _{\infty} & \ge(1+\epsilon)M
\end{align*}
This means there must exist a coordinate $i$ such that $\sum_{t\in T_{lo}}\frac{|(Ax^{(t)})_{i}|}{M}\ge(1+\epsilon)T_{lo}$.
We have if $\frac{(Ax^{(t)})_{i}^{2}}{M^{2}}<\left(1+\epsilon\right)$,
then $\frac{|(Ax^{(t)})_{i}|}{M}<(1+\frac{\epsilon}{2})$, and if
$\frac{(Ax^{(t)})_{i}^{2}}{M^{2}}\ge\left(1+\epsilon\right)$, $\frac{(Ax^{(t)})_{i}^{2}}{M^{2}}=\alpha_{i}^{(t)}>1$.
Hence, 
\begin{align*}
\frac{|(Ax^{(t)})_{i}|}{M} & \le(1+\frac{\epsilon}{2})+\one_{\alpha_{i}^{(t)}>1}\sqrt{\alpha_{i}^{(t)}}
\end{align*}
We obtain 
\begin{align*}
T_{lo}(1+\frac{\epsilon}{2})+\sum_{t\in T_{lo},\alpha_{i}^{(t)}>1}\sqrt{\alpha_{i}^{(t)}} & \ge(1+\epsilon)T_{lo}
\end{align*}
and thus $\sum_{t\in T_{lo},\alpha_{i}^{(t)}>1}\sqrt{\alpha_{i}^{(t)}}\ge\frac{\epsilon T_{lo}}{2}$.
For $t$ such that $\alpha_{i}^{(t)}>1$, we have $\sqrt{\alpha_{i}^{(t)}}=\frac{|(Ax^{(t)})_{i}|}{M}\in[(1+\epsilon)^{\frac{1}{2}},d^{\frac{1}{3}}]$.
By Lemma A.1 from \cite{ene2019improved}, we have a lower bound for
$\prod_{t\in T_{lo},\alpha_{i}^{(t)}>1}\alpha_{i}^{(t)}$:
\begin{align}
\prod_{t\in T_{lo},\alpha_{i}^{(t)}>1}\alpha_{i}^{(t)} & \ge\min\left\{ \left(1+\epsilon\right)^{\frac{\epsilon T_{lo}}{2\left(1+\epsilon\right)^{\frac{1}{2}}}},d^{\frac{2}{3}\frac{\epsilon T_{lo}}{2d^{\frac{1}{3}}}}\right\} \label{eq:low}
\end{align}
On the other hand, $\frac{r_{i}^{(T)}}{r_{i}^{(0)}}$ is an upperbound
for $\prod_{t\in T_{lo},\alpha_{i}^{(t)}>1}\alpha_{i}^{(t)}$. We
initialize $r_{i}^{(0)}\ge\frac{d}{n}$ and before the algorithm terminates
$r_{i}^{(T)}\le\|r^{(T)}\|_{1}\le\frac{w+d}{\epsilon}\le\frac{3d}{\epsilon}$.
We now have that
\begin{align}
\prod_{t\in T_{lo},\alpha_{i}^{(t)}>1}\alpha_{i}^{(t)} & \le\frac{r_{i}^{(T)}}{r_{i}^{(0)}}\le\frac{3d}{\epsilon}\cdot\frac{n}{d}\le\frac{3n}{\epsilon}\label{eq:high}
\end{align}
From (\ref{eq:low}) and (\ref{eq:high}), we obtain $T_{lo}\le O\big(\big(\frac{1}{\epsilon^{2}}+\frac{d^{\frac{1}{3}}}{\epsilon\log d}\big)\log\frac{3n}{\epsilon}\big).$
\end{proof}

\section{Analysis of the QSC Algorithm\protect\label{sec:analysis-QSC-alg}}

In this section, we analyze our main Algorithm \ref{alg:QSC-main}
for QSC optimization. The algorithm starts with a solution $x^{(0)}$
and iteratively updates it. Building on prior work \cite{cohen2017matrix,allen2017much,adil2021unifying},
we show that the algorithm returns a nearly-optimal solution provided
we have access to an algorithm for solving the following residual
problems. Letting $x^{(t)}$ be the solution in iteration $t$ of
Algorithm \ref{alg:QSC-main}, we would like to find an approximate
solution to the following residual problem:
\begin{align}
\max_{\Delta\colon\|A\Delta\|_{\infty}\le\frac{1}{C}}\res_{x^{(t)}}(\Delta) & \coloneqq\nabla f(x^{(t)})^{\top}(A\Delta)-\frac{1}{e}(A\Delta)^{\top}\nabla^{2}f(x^{(t)})(A\Delta)\label{eq:residual-problem-2}
\end{align}
Using a binary search approach as in \cite{adil2021unifying}, we
can find a value $M$ that is a $2$-approximation to the optimal
value of the above residual problem. This binary search is shown in
the two for-loops in Line \ref{forloop-nu} and Line \ref{forloop-M}
of Algorithm \ref{alg:QSC-main}. With each guess $M$, the algorithm
uses the $\ressolver$ (Algorithm \ref{alg:residual-solver}) to find
a solution to the residual problem. We show that if the residual solver
returns a constant factor approximation to the residual problem then
Algorithm \ref{alg:QSC-main} returns a nearly-optimal solution. We
now give the formal analysis.

We start by showing that the steps executed in the two for-loops in
Algorithm \ref{alg:QSC-main} are sufficient to find a sufficiently
close guess for the residual problem objective. The lemma follows
from Lemmas 4.3, 4.4 in \cite{adil2021unifying}.
\begin{lem}[Lemma 4.3, 4.4 \cite{adil2021unifying}]
\label{lem:bin-search-opt-1} For each value $t$ of the outer-most
iteration with iterate $x^{(t)}$, there is an inner-most iteration
of Algorithm \ref{alg:QSC-main} for which the following hold. Let
$M$ be the value considered in that iteration. Consider the residual
problem:
\[
\max_{\Delta\colon\|A\Delta\|_{\infty}\le1/C}\res_{x^{(t)}}(\Delta)=\nabla f(x^{(t)})^{\top}(A\Delta)-\frac{1}{e}(A\Delta)^{\top}\nabla^{2}f(x^{(t)})(A\Delta)
\]
Let $\Delta^{*}\in\arg\max_{\|A\Delta\|_{\infty}\le1/C}\res_{x^{(t)}}(\Delta)$
be an optimal solution to the residual problem, and let $\opt=\res_{x^{(t)}}(\Delta^{*})$
be its objective value. We have $\opt\in(\frac{M}{2},M]$ and $(A\Delta^{*})^{\top}\nabla^{2}f(x^{(t)})(A\Delta^{*})\le eM$.
\end{lem}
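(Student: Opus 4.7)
The plan is to prove the two conclusions of Lemma \ref{lem:bin-search-opt-1} separately: the existence of an inner-most iteration with $\opt\in(M/2,M]$, and the Hessian bound $(A\Delta^{*})^{\top}\nabla^{2}f(x^{(t)})(A\Delta^{*})\le eM$. The first is a bookkeeping argument about the two nested binary searches, and the second is a first-order optimality argument along the ray $s\Delta^{*}$.

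\textbf{Step 1 (outer loop covers $h(x^{(t)})-h(x^{*})$).} Since $h(x)\ge B$ everywhere by Assumption \ref{assu:a1}, the initial value $\nu=h(x^{(t)})-B$ is an upper bound on $h(x^{(t)})-h(x^{*})$. Halving $\nu$ until $\nu<\epsilon$ sweeps a dyadic grid which, unless the outer termination criterion has already yielded the desired accuracy, contains some $\nu$ with $h(x^{(t)})-h(x^{*})\in(\nu/2,\nu]$.

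\textbf{Step 2 (translate to a range for $\opt$).} For such a $\nu$, I would invoke Lemmas 4.3 and 4.4 of \cite{adil2021unifying} to conclude $\opt\in(\Omega(\nu/(CR)),e^{2}\nu]$. The upper bound $\opt\le e^{2}\nu$ follows from Hessian stability of $h$ inside the $\ell_{\infty}$ trust region combined with Assumption \ref{assu:a2}, which caps $\|A(x-x^{*})\|_{\infty}$ on the level set by $R$; the lower bound converts the current suboptimality gap into guaranteed progress of the quadratic residual model, losing a factor of $CR$ because the step is scaled down to fit inside the $1/C$-sized $\ell_{\infty}$ ball.

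\textbf{Step 3 (inner loop covers $\opt$).} The inner loop starts at $M=e^{2}\nu\ge\opt$ and halves $M$. Because consecutive dyadic intervals $(M/2,M]$ tile the positive reals (sharing only endpoints), at least one value of $M$ scanned by the loop must satisfy $\opt\in(M/2,M]$ by the range established in Step 2, settling the first conclusion.

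\textbf{Step 4 (Hessian bound via scaling).} For the second conclusion, set $y^{*}=A\Delta^{*}$, $g=\nabla f(x^{(t)})$, and $D=\nabla^{2}f(x^{(t)})$, and define
\begin{equation*}
\phi(s)=s\,g^{\top}y^{*}-\frac{s^{2}}{e}\,(y^{*})^{\top}Dy^{*},\qquad s\in[0,1].
\end{equation*}
For every $s\in[0,1]$, the vector $s\Delta^{*}$ is feasible since $\|A(s\Delta^{*})\|_{\infty}=s\|A\Delta^{*}\|_{\infty}\le 1/C$, so $\phi(s)=\res_{x^{(t)}}(s\Delta^{*})\le\opt=\phi(1)$. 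Because $\phi$ is concave and attains its maximum on $[0,1]$ at the right endpoint, its derivative must satisfy $\phi'(1)\ge 0$, i.e., $g^{\top}y^{*}\ge\frac{2}{e}(y^{*})^{\top}Dy^{*}$. Plugging this back into $\opt=g^{\top}y^{*}-\frac{1}{e}(y^{*})^{\top}Dy^{*}$ gives $\opt\ge\frac{1}{2}g^{\top}y^{*}$, and combining the two inequalities yields $(y^{*})^{\top}Dy^{*}\le e\cdot\opt\le eM$, as required.

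The only step likely to require real care is Step 2, whose constants depend on the precise formulation of the quasi-self-concordance estimates of \cite{adil2021unifying}; Steps 1, 3, and 4 are mechanical.
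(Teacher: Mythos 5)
Your proof is correct, and it's worth noting that the paper itself does not actually prove this lemma---it simply cites Lemmas 4.3 and 4.4 of \cite{adil2021unifying}. Your Step 2 defers to that same prior work, which is where the real content lies (relating the suboptimality gap to $\opt$ via Hessian stability), so you and the paper are on equal footing there. Your Steps 1 and 3 correctly spell out the binary-search bookkeeping that the paper takes for granted. Step 4 is the genuine added value: it gives a clean, self-contained proof of the Hessian bound $(A\Delta^*)^\top \nabla^2 f(x^{(t)})(A\Delta^*) \le eM$ via a one-dimensional first-order optimality condition along the ray $s\Delta^*$, $s\in[0,1]$, using concavity of $\phi$ and the scaling-invariance of the $\ell_\infty$ constraint. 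The computation checks out: $\phi'(1)\ge 0$ gives $g^\top y^* \ge \frac{2}{e}(y^*)^\top D y^*$, hence $\opt \ge \frac{1}{e}(y^*)^\top D y^*$, hence $(y^*)^\top D y^* \le e\,\opt \le eM$. One small nit worth flagging: the paper's Lemma \ref{lem:bin-search-opt} gives the lower bound $\opt > \nu/(8CR)$, whereas the inner for-loop in Algorithm \ref{alg:QSC-main} terminates at $M \ge \nu/(CR)$; the dyadic intervals $(M/2,M]$ scanned therefore only cover values of $\opt$ down to roughly $\nu/(2CR)$, so the constants in the loop bound and the lemma's lower bound don't obviously match. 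This is a discrepancy in the paper's pseudocode rather than a flaw in your argument---your Step 3 is phrased conditionally on Step 2's range, which is the right logical structure---but it deserves a remark.
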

Next, we show that $\ressolver(x^{(t)},M)$ returns a constant factor
approximation to the residual problem $\max_{\Delta\colon\|A\Delta\|_{\infty}\le1/C}\res_{x^{(t)}}(\Delta)$
when we run it with the value $M$ guaranteed by the above lemma.
We will provide the proof of this lemma in Section \ref{subsec:residual-solver-guarantee}. 
\begin{lem}
\label{lem:residual-solver-guarantee} Consider an outermost iteration
$t$ of Algorithm \ref{alg:QSC-main} and an innermost iteration with
a value $M$ with the properties guaranteed by Lemma \ref{lem:bin-search-opt-1}.
Let $\widetilde{\Delta}\coloneqq\Delta_{\nu,M}$ be the solution returned
by $\ressolver(x^{(t)},M)$. We have $\|A\widetilde{\Delta}\|_{\infty}\le\frac{1}{C}$
and $\res_{x^{(t)}}(\widetilde{\Delta})\ge\frac{\opt}{20}.$
\end{lem}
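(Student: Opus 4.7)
My plan has three steps: use Lemma~\ref{lem:bin-search-opt-1} to force Algorithm~\ref{alg:residual-solver} into its primal-output branch; establish Invariant~\ref{inv:invariant1} by adapting the $\ell_\infty$-regression analysis of Section~\ref{sec:analysis-regression}; then convert the primal output into a constant-factor approximation to the residual objective by a simple scaling. Throughout I use $s_i = f''((Ax^{(t)})_i)$ and $g = -\tfrac{1}{M}A^\top\nabla f(x^{(t)})$ as in Algorithm~\ref{alg:residual-solver}.

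First, Lemma~\ref{lem:bin-search-opt-1} guarantees $\opt\in(M/2,M]$ and $\langle s,(A\Delta^{*})^{2}\rangle\leq eM$ for the optimizer $\Delta^{*}$ of the residual problem. Letting $\beta=\nabla f(x^{(t)})^{\top}A\Delta^{*}/M$, the identity $\opt=M\beta-\tfrac{1}{e}\langle s,(A\Delta^{*})^{2}\rangle$ forces $\beta\in(1/2,2]$. The rescaled point $\Delta^{*}/\beta$ is feasible for $g^{\top}\Delta=-1$ and evaluates the reformulated objective \eqref{eq:new-residual-problem} at
\[
\tfrac{1}{\beta^{2}}\left(\langle s,(A\Delta^{*})^{2}\rangle+\tfrac{MC^{2}}{2}\|A\Delta^{*}\|_{\infty}^{2}\right)\leq\tfrac{eM+M/2}{\beta^{2}}\leq 4M(e+\tfrac{1}{2})<13M,
\]
so $\min_{g^{\top}\Delta=-1}\langle s,(A\Delta)^{2}\rangle+\tfrac{MC^{2}}{2}\|A\Delta\|_{\infty}^{2}<13M$. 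By Invariant~\ref{inv:invariant1} this rules out the dual-certificate branch, so the algorithm must exit through Line~6 or Line~12 with a primal $\Delta$ satisfying $g^{\top}\Delta=-1$, $\|A\Delta\|_{\infty}\leq 11/C$, and $\langle s,(A\Delta)^{2}\rangle<13M$.

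Second, I would prove the invariant by mirroring Lemma~\ref{lem:Linf-invariant}, applied to the composite working weights $p^{(t)}=2(\|w\|_{1}+d)s+\tfrac{MC^{2}}{2}r^{(t)}$. The target telescoping inequality is that each update produces an energy gain $\cE(p^{(t+1)})-\cE(p^{(t)})$ at least $\Theta(M^{2}C^{2})\cdot(\|r^{(t+1)}\|_{1}-\|r^{(t)}\|_{1})$, so that pushing $\|r^{(t)}\|_{1}$ past the stopping threshold $2(\|w\|_{1}+d)$ would trigger the dual predicate $\langle s+\tfrac{MC^{2}}{2}r^{(t)}/\|r^{(t)}\|_{1},(A\Delta^{(t)})^{2}\rangle\geq 13M$. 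In Case~1, Lemma~\ref{lem:energy-lewis-weight} applies because $r^{(0)}\geq w$, and the single-coordinate bump yields an energy jump proportional to $S^{2}=(11d^{1/3}/C)^{2}$, chosen exactly to meet the $13M$ threshold. In Case~2, the multiplicative reweighting $r_{j}^{(t+1)}=\tfrac{1}{52}r_{j}^{(t)}(A\Delta^{(t)})_{j}^{2}C^{2}$ of over-threshold coordinates plugs into Lemma~\ref{lem:energy-usual} to give the matching per-step gain; the averaging over $v^{(t')}$ uses convexity of $\|A\cdot\|_{\infty}$ to guarantee termination, on pain of $\|r^{(t)}\|_{1}$ growing geometrically fast as in Lemma~\ref{lem:Linf-low-width-iter}.

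Finally, I would extract the residual solution by setting $\widetilde{\Delta}=\Delta/11$; then $\|A\widetilde{\Delta}\|_{\infty}\leq 1/C$, and $g^{\top}\Delta=-1$ translates to $\nabla f(x^{(t)})^{\top}A\Delta=M$, giving
\[
\res_{x^{(t)}}(\widetilde{\Delta})=\tfrac{M}{11}-\tfrac{1}{121e}\langle s,(A\Delta)^{2}\rangle\geq\tfrac{M}{11}-\tfrac{13M}{121e}>\tfrac{M}{20}\geq\tfrac{\opt}{20},
\]
using $\tfrac{1}{11}-\tfrac{13}{121e}>\tfrac{1}{20}$ numerically. The principal obstacle is the invariant-maintenance step: the fixed quadratic weighting $2(\|w\|_{1}+d)s$ alongside the evolving $r^{(t)}$ breaks direct reuse of the pure-$\ell_{\infty}$ calculation from Section~\ref{sec:analysis-regression}, and the width threshold $S$, the multiplicative factor $1/52$, and the stopping bound $2(\|w\|_{1}+d)$ must be jointly tuned so that the $13M$ dual-certificate level and the $11/C$ primal-width guarantee are preserved simultaneously. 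The rest of the proof is composition with Lemma~\ref{lem:bin-search-opt-1}.
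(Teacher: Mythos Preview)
Your plan matches the paper's proof almost exactly: the paper also (i) uses Lemma~\ref{lem:bin-search-opt-1} to show the reformulated objective \eqref{eq:new-residual-problem} is below $13M$, forcing the primal branch (this is Lemma~\ref{lem:solution-guarantee}); (ii) establishes Invariant~\ref{inv:invariant1} via an energy telescoping argument on $p^{(t)}$ (Lemma~\ref{lem:res-invariant}); and (iii) rescales the returned $\Delta$ by $1/11$.

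Two small corrections are worth noting. First, the telescoping rate is $\cE(p^{(t+1)})-\cE(p^{(t)})\geq 26M\,(\|r^{(t+1)}\|_1-\|r^{(t)}\|_1)$, not $\Theta(M^2C^2)$: the factor $MC^2/2$ relating $p$ to $r$ and the factor $52/C^2$ built into the Case-2 update cancel, leaving a bound linear in $M$. Second, your Step~2 covers only the dual half of Invariant~\ref{inv:invariant1}. The primal half---that the returned $\Delta$ satisfies $\langle s,(A\Delta)^2\rangle\leq\min_{g^\top\Delta=-1}\big(\langle s,(A\Delta)^2\rangle+\tfrac{MC^2}{2}\|A\Delta\|_\infty^2\big)$, which you need in Step~3---is \emph{not} a consequence of the energy argument. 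The paper proves it separately (Lemma~\ref{lem:quadratic-term-guarantee}): optimality of $\Delta^{(t)}$ for $\langle p^{(t)},(A\cdot)^2\rangle$ together with $\|r^{(t)}\|_1\leq 2(\|w\|_1+d)$ yields $\langle s,(A\Delta^{(t)})^2\rangle\leq\langle s+\tfrac{MC^2}{2}r^{(t)}/\|r^{(t)}\|_1,(A\Delta')^2\rangle$ for any feasible $\Delta'$, and convexity then carries this through the averaging in Line~12. This is short but essential, and should be added to your plan.
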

Equipped with the above guarantee, we can show that the algorithm
converges in a small number of iterations. To this end, we first prove
the following lemma which shows that each iteration significantly
reduces the optimality gap.
\begin{lem}
\label{lem:progress-1} Let $x$ be an iterate. Suppose that $\ressolver(x,M)$
returns a solution $\widetilde{\Delta}$ satisfying $\|A\widetilde{\Delta}\|_{\infty}\le\frac{1}{C}$
and $\res_{x}(\widetilde{\Delta})\ge\kappa\max_{\|A\Delta\|_{\infty}\le\frac{1}{C}}\res_{x}(\Delta)$.
We have
\begin{align*}
h\Big(x-\frac{\widetilde{\Delta}}{e^{2}}\Big)-h(x^{*}) & \le\Big(1-\frac{\kappa}{e^{2}CR}\Big)\left(h(x)-h(x^{*})\right).
\end{align*}
\end{lem}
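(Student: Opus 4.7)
The plan is to combine a quadratic upper bound on $h$ coming from quasi-self-concordance with a lower bound on $\opt = \max_{\|A\Delta\|_{\infty}\le 1/C} \res_x(\Delta)$ in terms of the current suboptimality $h(x) - h(x^*)$; the lemma then falls out by using $\res_x(\widetilde{\Delta}) \ge \kappa \cdot \opt$ and rearranging.

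For the per-step upper bound, I would use that $C$-quasi-self-concordance of $f$ implies $e^{-1} f''(x) \le f''(y) \le e\,f''(x)$ whenever $|y - x| \le 1/C$. Applied coordinate-wise and integrated along the segment, this yields, for every $\Delta$ with $\|A\Delta\|_\infty \le 1/C$,
\[
h(x - \Delta) \;\le\; h(x) - \nabla h(x)^\top \Delta + \frac{e}{2}\,\Delta^\top \nabla^2 h(x)\,\Delta.
\]
Applying this with $\Delta = \widetilde\Delta / e^2$ (which is even smaller in $\ell_\infty$ since $\|A\widetilde\Delta/e^2\|_\infty \le 1/(e^2 C)$) and noting that the resulting quadratic coefficient $e/(2e^4) = 1/(2e^3)$ is smaller than $1/e^3$, the right-hand side is at most $h(x) - \res_x(\widetilde\Delta)/e^2$. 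So a single update decreases $h$ by at least $\res_x(\widetilde\Delta)/e^2$.

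For the lower bound on $\opt$, I would probe $\res_x$ along the direction toward the optimum. By Assumption~\ref{assu:a2}, $\|A(x - x^*)\|_\infty \le R$, so $\Delta_\lambda := \lambda(x - x^*)$ is feasible for every $\lambda \in (0, 1/(CR)]$. Writing $G = \nabla h(x)^\top(x - x^*)$ and $H = (x - x^*)^\top \nabla^2 h(x)(x - x^*)$, the lower-side QSC Taylor bound together with the convexity inequality $h(x - \Delta_\lambda) \le (1 - \lambda) h(x) + \lambda h(x^*)$ yields, for every feasible $\lambda$,
\[
\lambda G - \frac{\lambda^2 H}{2e} \;\ge\; \lambda\,(h(x) - h(x^*)).
\]
I would then maximize $\res_x(\Delta_\lambda) = \lambda G - \lambda^2 H / e$ over feasible $\lambda$ by a case split on whether the unconstrained maximizer $\lambda^\star = eG/(2H)$ lies inside $(0, 1/(CR)]$. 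If $\lambda^\star > 1/(CR)$, the corner $\lambda = 1/(CR)$ produces $\res_x(\Delta_\lambda) \ge G/(2CR)$; if $\lambda^\star \le 1/(CR)$, evaluation at $\lambda^\star$ gives $eG^2/(4H)$, and the upper bound $H \le 2eCR\,G$ (which follows from the displayed inequality at $\lambda = 1/(CR)$, using $h(x) - h(x^*) \ge 0$) yields the same order of lower bound. Combined with $G \ge h(x) - h(x^*)$ from convexity, both cases give $\opt \ge c\,(h(x) - h(x^*))/(CR)$ for an absolute constant $c$.

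Combining the two estimates with $\res_x(\widetilde\Delta) \ge \kappa \cdot \opt$ gives
\[
h\!\left(x - \tfrac{\widetilde\Delta}{e^2}\right) - h(x^*) \;\le\; \Big(1 - \frac{c\kappa}{e^2 CR}\Big)(h(x) - h(x^*)),
\]
which is the stated contraction after absorbing $c$. The main obstacle is the second step: the feasibility constraint $\lambda \le 1/(CR)$ forces the balance between the linear and quadratic parts of $\res_x$ to depend on the ratio $G/H$, and the case split is what lets the Hessian-stability inequality translate into a usable lower bound on $\opt$. This is also exactly where the factor $R$ enters the final convergence rate; the first step is a direct consequence of the QSC definition, and the concluding combination is arithmetic.
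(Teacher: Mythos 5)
Your proof is correct up to an absolute constant (you obtain a contraction factor $1 - c\kappa/(e^2 CR)$ with $c$ around $1/8$, which is absorbed into the downstream $O(\cdot)$ bounds), and it takes a genuinely different route from the paper's, so a comparison is worthwhile.

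The paper's argument probes only the single feasible step $\widehat{\Delta} = \frac{1}{CR}(x - x^*)$ and chains three facts: the Hessian-stability upper bound on $h\big(x - \widetilde{\Delta}/e^2\big)$ (which is your first step); the Hessian-stability lower bound in the form $h(\widehat{x}) - h(x) \ge -\res_{x}(\widehat{\Delta})$; and convexity, $h(\widehat{x}) - h(x) \le \frac{1}{CR}\big(h(x^*) - h(x)\big)$. These telescope directly, giving the exact stated constant with no case analysis, because the paper treats the $\frac{1}{e}$ coefficient in the definition of $\res_x$ as exactly matching the coefficient of the quadratic term coming from Hessian stability. Your route instead probes the full segment $\Delta_\lambda = \lambda(x - x^*)$, expands $\res_x(\Delta_\lambda) = \lambda G - \lambda^2 H/e$ explicitly, and maximizes over the feasible interval $\lambda \in (0, 1/(CR)]$ by a case split on whether the unconstrained maximizer lies inside. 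This is more work and loses the constant $c$, but it buys a real technical benefit: the integral Taylor remainder together with Hessian stability actually yields $h(\widehat{x}) - h(x) \ge -\nabla f(x)^\top(A\widehat{\Delta}) + \frac{1}{2e}(A\widehat{\Delta})^\top\nabla^2 f(x)(A\widehat{\Delta})$, with a factor $\tfrac12$, whereas $\res_x$ carries coefficient $\frac{1}{e}$, so the paper's claimed equality $-\res_x(\widehat{\Delta})$ on that line does not follow as written unless the residual is redefined with $\frac{1}{2e}$. Your case-split argument never requires that coefficient matching, so it is insensitive to exactly this discrepancy. In short: the paper's proof is cleaner and sharper in the constant if one grants its quadratic lower bound; yours is a bit longer, gives a worse but still $O(1)$ constant, and is robust to the factor-of-two slip.
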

In order to prove Lemma \ref{lem:progress-1}, first we recall the
notion of Hessian stability.
\begin{defn}
A function $h:\R^{d}\to\R$ is $(r,d(r))$-Hessian stable in the $\ell_{\infty}$-norm
iff for all $x,y$ such that $\left\Vert x-y\right\Vert _{\infty}\le r$
we have 
\begin{align*}
\frac{1}{d(r)}\nabla^{2}h(x) & \preceq\nabla^{2}h(y)\preceq d(r)\nabla^{2}h(x).
\end{align*}
\end{defn}
\begin{fact}[From \cite{carmon2020acceleration}]
For a $C$-quasi-self-concordant function $f$, $\sum_{i}f(x_{i})$
is $\left(r,e^{Cr}\right)$-Hessian stable in the $\ell_{\infty}$-norm.
\end{fact}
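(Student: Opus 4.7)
The plan is to exploit the fact that $h(x) := \sum_i f(x_i)$ has a diagonal Hessian, so each Loewner inequality in the definition of Hessian stability decouples into independent scalar inequalities about $f''$ evaluated at single coordinates. First I would note that $\nabla^2 h(x) = \diag(f''(x_1), \ldots, f''(x_n))$, so for any $x, y$ the relation $e^{-Cr}\nabla^2 h(x) \preceq \nabla^2 h(y) \preceq e^{Cr}\nabla^2 h(x)$ is equivalent to the coordinatewise bounds $e^{-Cr} f''(x_i) \le f''(y_i) \le e^{Cr} f''(x_i)$ for every $i$. Since $\|x-y\|_\infty \le r$ implies $|x_i - y_i| \le r$, it suffices to establish the one-dimensional claim
\[
e^{-C|a-b|} f''(a) \;\le\; f''(b) \;\le\; e^{C|a-b|} f''(a) \qquad \text{for all } a,b \in \R.
\]

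Next I would prove this scalar bound by a log-derivative argument. Quasi-self-concordance forces $f''(t) \ge 0$ for all $t$, since $f''(t_0) < 0$ would make $|f'''(t_0)| \le C f''(t_0) < 0$ impossible. Assuming for the moment that $f'' > 0$ everywhere, one has
\[
\bigl| (\log f'')'(t) \bigr| \;=\; \bigl| f'''(t)/f''(t) \bigr| \;\le\; C,
\]
so integrating from $a$ to $b$ yields $|\log f''(b) - \log f''(a)| \le C|b-a|$, and exponentiating gives exactly the desired sandwich.

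The main obstacle is handling the degenerate case where $f''$ vanishes somewhere and the log-derivative is undefined. I plan to dispose of this with a short Grönwall argument: if $f''(c) = 0$, then from $f'''(t) \le C f''(t)$ one sees $\frac{d}{dt}\bigl(e^{-Ct} f''(t)\bigr) \le 0$, so for $t \ge c$ we get $f''(t) \le f''(c) e^{C(t-c)} = 0$, which combined with $f'' \ge 0$ forces $f'' \equiv 0$ on $[c, \infty)$; the symmetric inequality $f'''(t) \ge -C f''(t)$ applied in the reverse direction gives the same on $(-\infty, c]$. Thus the dichotomy holds: either $f'' > 0$ identically, in which case the log-derivative step applies, or $f'' \equiv 0$, in which case $\nabla^2 h \equiv 0$ and the scalar inequality is the trivial $0 \le 0 \le 0$. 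Reassembling the coordinatewise inequalities produces the Loewner sandwich, establishing $(r, e^{Cr})$-Hessian stability of $h$ in the $\ell_\infty$-norm.
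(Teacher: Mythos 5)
Your proof is correct. The paper states this as a Fact imported from \cite{carmon2020acceleration} and gives no proof of its own, so there is nothing to diverge from; your argument (decoupling the diagonal Hessian into scalar inequalities, then integrating the logarithmic derivative $f'''/f''$ to get $e^{-C|a-b|}f''(a)\le f''(b)\le e^{C|a-b|}f''(a)$) is exactly the standard derivation used in that line of work. Your extra care with the degenerate case via the Gr\"onwall dichotomy --- showing that $f''$ vanishing at one point forces $f''\equiv 0$ --- is a detail usually glossed over in the literature, and it is handled correctly here.
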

We now give the proof of Lemma \ref{lem:progress-1}.

\begin{proof}
Let $x^{*}=\arg\min h\left(x\right)$, we have $\left\Vert Ax-Ax^{*}\right\Vert _{\infty}\le R$,
and $h$ is $\left(r,e\right)$-Hessian stable where $r=\frac{1}{C}$.
Let $\widehat{x}=\frac{r}{R}x^{*}+\left(1-\frac{r}{R}\right)x$; and
let $\widehat{\Delta}=x-\widehat{x}=\frac{r}{R}\left(x-x^{*}\right)$.
We have $\left\Vert A\widehat{\Delta}\right\Vert _{\infty}\le r$.
Suppose that $\Delta^{*}$ is an optimal solution to the residual
problem. Also recall the notation $\nabla h(x)=A^{\top}\nabla f(x)$
and $\nabla^{2}h(x)=A^{\top}\nabla^{2}f(x)A$ where we write $\nabla f(x)=\Bigg(\begin{array}{c}
f'((Ax-b)_{1})\\
\dots\\
f'((Ax-b)_{n})
\end{array}\Bigg)$, $\nabla^{2}f(x)=\diag(f''((Ax-b)_{1}),\dots,f''((Ax-b)_{n}))$.

Let $k=e^{2}$ and using hessian stability, we have
\begin{align*}
h\left(x-\widehat{\Delta}\right)-h\left(x\right) & \ge-\nabla f\left(x\right)^{\top}\left(A\widehat{\Delta}\right)+\frac{1}{\exp\left(Cr\right)}\left(A\widehat{\Delta}\right)^{\top}\nabla^{2}f\left(x\right)\left(A\widehat{\Delta}\right)\\
 & =-\nabla f\left(x\right)^{\top}\left(A\widehat{\Delta}\right)+\frac{1}{e}\left(A\widehat{\Delta}\right)^{\top}\nabla^{2}f\left(x\right)\left(A\widehat{\Delta}\right)\\
 & =-\res_{x}\left(\widehat{\Delta}\right);\\
h\left(x-\frac{\widetilde{\Delta}}{k}\right)-h\left(x\right) & \le-\nabla f\left(x\right)^{\top}\left(\frac{A\widetilde{\Delta}}{k}\right)+\exp\left(\frac{Cr}{k}\right)\left(\frac{A\widetilde{\Delta}}{k}\right)^{\top}\nabla^{2}g\left(x\right)\left(\frac{A\widetilde{\Delta}}{k}\right)\\
 & \le\frac{1}{k}\left(-\nabla f\left(x\right)^{\top}\left(A\widetilde{\Delta}\right)+\frac{1}{e}\left(A\widetilde{\Delta}\right)^{\top}\nabla^{2}f\left(x\right)\left(A\widetilde{\Delta}\right)\right)\\
 & =-\frac{1}{k}\res_{x}\left(\widetilde{\Delta}\right).
\end{align*}
Since $\res\left(\widetilde{\Delta}\right)\ge\kappa\res_{x}\left(\Delta^{*}\right)\ge\kappa\res_{x}\left(\widehat{\Delta}\right)$,
we have
\begin{align*}
h\left(x-\frac{\widetilde{\Delta}}{k}\right)-h\left(x\right) & \le-\frac{\kappa}{k}\res_{x}\left(\widehat{\Delta}\right)\\
 & \le\frac{\kappa}{k}\left(h\left(x-\widehat{\Delta}\right)-h\left(x\right)\right)\\
 & =\frac{\kappa}{k}\left(h\left(\widehat{x}\right)-h\left(x\right)\right)\\
 & \le\frac{\kappa r}{kR}\left(h\left(x^{*}\right)-h\left(x\right)\right)
\end{align*}
This give us the conclusion
\begin{align*}
h\left(x-\frac{\widetilde{\Delta}}{k}\right)-h\left(x^{*}\right) & \le\left(1-\frac{\kappa r}{kR}\right)\left(h\left(x\right)-h\left(x^{*}\right)\right).
\end{align*}
\end{proof}

By combining Lemmas \ref{lem:residual-solver-guarantee} and \ref{lem:progress-1},
we obtain the following convergence guarantee.
\begin{lem}
\label{lem:qsc-main-convergence}Algorithm \ref{alg:QSC-main} constructs
a solution $x^{(T)}$ such that $h(x^{(T)})\le h(x^{*})+\epsilon$
using $T=O\left(RC\log\left(\frac{h(x^{(0)})-h(x^{*})}{\epsilon}\right)\right)$
iterations of the outermost loop.
\end{lem}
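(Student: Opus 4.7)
The plan is to show that each outer iteration of Algorithm~\ref{alg:QSC-main} contracts the optimality gap $h(x^{(t)})-h(x^*)$ by a factor of roughly $1-\Theta(1/(CR))$, and then to invoke a standard telescoping argument to obtain the stated iteration count. The two ingredients are already in place: Lemma~\ref{lem:bin-search-opt-1} certifies that one of the $(\nu,M)$ pairs tested in the inner double loop has $\opt\in(M/2,M]$, and Lemma~\ref{lem:residual-solver-guarantee} guarantees that the call $\ressolver(x^{(t)},M)$ for that particular $M$ returns a candidate $\widetilde{\Delta}$ with $\|A\widetilde{\Delta}\|_\infty\le 1/C$ and $\res_{x^{(t)}}(\widetilde{\Delta})\ge \opt/20$. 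This sets up an application of Lemma~\ref{lem:progress-1} with $\kappa=1/20$.

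First I would argue that the actual update is at least as good as the one we would get by using this specific $\widetilde{\Delta}$. The algorithm gathers the full candidate set $\mathbf{\Delta}$ across all $(\nu,M)$ values and then sets $x^{(t+1)}=x^{(t)}-\Delta^{(t)}/e^2$ where $\Delta^{(t)}\in\arg\min_{\Delta\in\mathbf{\Delta}}h(x^{(t)}-\Delta/e^2)$. In particular, since the good candidate $\widetilde{\Delta}$ from the lemma lies in $\mathbf{\Delta}$, we have
\[
h(x^{(t+1)})\le h\Big(x^{(t)}-\tfrac{\widetilde{\Delta}}{e^2}\Big).
\]
Applying Lemma~\ref{lem:progress-1} with $\kappa=1/20$ then yields
\[
h(x^{(t+1)})-h(x^*)\le \Big(1-\tfrac{1}{20\,e^2\,CR}\Big)\bigl(h(x^{(t)})-h(x^*)\bigr).
\]

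Next, I would carefully justify that the diameter bound $R$ remains valid at every iterate $x^{(t)}$. Because the update is chosen as the best among candidates (including the trivial move that keeps the iterate unchanged in objective value, or more directly because the contraction above is monotone and $h(x^{(t+1)})\le h(x^{(t)})$), every iterate lies in the sublevel set ${\cal L}_0$, so Assumption~\ref{assu:a2} ensures $\|Ax^{(t)}-Ax^*\|_\infty\le R$ at every step. This is the condition that the proof of Lemma~\ref{lem:progress-1} relies on when constructing the comparison point $\widehat{x}=\tfrac{r}{R}x^*+(1-\tfrac{r}{R})x$, so the recursion above is valid for every $t$.

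Finally, iterating the contraction for $T$ steps gives
\[
h(x^{(T)})-h(x^*)\le \Big(1-\tfrac{1}{20\,e^2\,CR}\Big)^T\bigl(h(x^{(0)})-h(x^*)\bigr),
\]
and using $(1-a)^T\le e^{-aT}$ with $a=\Theta(1/(CR))$ shows that $T=O\bigl(CR\log\tfrac{h(x^{(0)})-h(x^*)}{\epsilon}\bigr)$ iterations of the outermost loop suffice to drive the optimality gap below $\epsilon$. I do not expect a serious obstacle: the two prior lemmas carry essentially all the analytic content. The only subtlety is the bookkeeping around the inner binary search, namely verifying that the correct $M$ really is tested (which is where Lemma~\ref{lem:bin-search-opt-1} enters) and that the ``take the best candidate'' step never hurts progress relative to the candidate produced at that $M$.
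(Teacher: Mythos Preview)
Your proposal is correct and follows exactly the approach the paper intends: the paper's proof is literally the one line ``By combining Lemmas~\ref{lem:residual-solver-guarantee} and~\ref{lem:progress-1}, we obtain the following convergence guarantee,'' and you have spelled out that combination (with the additional bookkeeping about the best-candidate selection and the persistence of the diameter bound $R$) in full detail.
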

Next, we analyze the overall running time of Algorithm \ref{alg:QSC-main}.
We have the following upper bound on number of calls that Algorithm
\ref{alg:QSC-main} makes to the residual solver.
\begin{lem}
\label{lem:qsc-main-calls-to-residual}In each iteration $t$, Algorithm
\ref{alg:QSC-main} makes $O\left(\log\left(CR\right)\log\left(\frac{h(x^{(0)})-h(x^{*})}{\epsilon}\right)\right)$
calls to the residual solver \ref{alg:residual-solver}.
\end{lem}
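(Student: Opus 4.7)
The plan is to bound the call count by a direct enumeration of the two nested for-loops in Algorithm \ref{alg:QSC-main} (Lines \ref{forloop-nu} and \ref{forloop-M}), since the body of the innermost loop invokes $\ressolver$ exactly once via the assignment $\Delta_{\nu,M} \gets \ressolver(x^{(t)}, M)$. Consequently, the total number of calls in a single outer iteration is simply the product of the two loop lengths.

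First I would count the inner $M$-loop. For any fixed $\nu$, $M$ begins at $e^2\nu$ and halves until $M < \nu/(CR)$, so the loop executes $\log_2(e^2 \nu / (\nu/(CR))) = \log_2(e^2 CR) = O(\log(CR))$ times. This count is independent of $\nu$ and of the current iterate $x^{(t)}$, so it contributes a uniform factor across all outer iterations.

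Next I would count the outer $\nu$-loop. It begins at $\nu = h(x^{(t)}) - B$ and halves until $\nu < \epsilon$, giving $O(\log((h(x^{(t)}) - B)/\epsilon))$ iterations. To replace the argument of the logarithm by $h(x^{(0)}) - h(x^*)$, I would first observe that the iterates of Algorithm \ref{alg:QSC-main} are monotone in objective value: indeed, the null step $\Delta = 0$ is implicitly a candidate in the selection rule $\arg\min_{\Delta \in \mathbf{\Delta}} h(x^{(t)} - \Delta/e^2)$ (or alternatively, Lemma \ref{lem:progress-1} shows the chosen step never increases $h$), hence $h(x^{(t)}) \leq h(x^{(0)})$. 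Combined with $B \leq h(x^*)$ from Assumption \ref{assu:a1}, this gives $h(x^{(t)}) - B \leq h(x^{(0)}) - B$. Treating $B$ as a sharp lower bound (as in the standard regime $B = 0$ for non-negative $h$), the quantities $h(x^{(0)}) - B$ and $h(x^{(0)}) - h(x^*)$ differ by at most a constant factor inside the logarithm, so the $\nu$-loop contributes $O(\log((h(x^{(0)}) - h(x^*))/\epsilon))$ iterations. Multiplying the two counts gives the claimed bound $O(\log(CR) \log((h(x^{(0)}) - h(x^*))/\epsilon))$ per outer iteration.

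The main obstacle is the clean replacement of $h(x^{(t)}) - B$ by $h(x^{(0)}) - h(x^*)$ inside the logarithm, since Assumption \ref{assu:a1} only supplies $B \leq h(x^*)$ and in principle $B$ could be far below $h(x^*)$. In practice this slack is absorbed by the logarithm, but to be rigorous one can resolve the gap in one of two ways: either state the bound in terms of $h(x^{(0)}) - B$ (consistent with the convention used in \cite{adil2021unifying} and equivalent up to logarithmic constants for any reasonable $B$), or modify the algorithm to initialize $\nu$ by a geometric doubling search starting from the initial gap estimate, which guarantees the $\nu$-loop explores only the range $[\epsilon, O(h(x^{(0)}) - h(x^*))]$. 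Either approach yields the stated per-iteration bound.
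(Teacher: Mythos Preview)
Your proposal is correct and follows essentially the same approach as the paper: count the iterations of the two nested for-loops (Lines \ref{forloop-nu} and \ref{forloop-M}) and multiply. The paper's own proof is in fact terser than yours and simply asserts $O\big(\log\frac{h(x^{(t)})-B}{\epsilon}\big)=O\big(\log\frac{h(x^{(0)})-h(x^{*})}{\epsilon}\big)$ without further justification, so the ``obstacle'' you flag regarding the replacement of $B$ by $h(x^{*})$ is something the paper also glosses over.
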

\begin{proof}
The algorithm executes a binary search using two for-loops \ref{forloop-nu}
and \ref{forloop-M}. The number of steps of for-loop \ref{forloop-nu}
is $O\left(\log\frac{h(x^{(t)})-B}{\epsilon}\right)=O\left(\log\frac{h(x^{(0)})-h(x^{*})}{\epsilon}\right)$
and the number of steps of for-loop \ref{forloop-M} is $O\left(\log CR\right)$,
giving us the claim.
\end{proof}

In Section \ref{lem:residual-solver-runtime}, we show the following
upper bound on the running time of the $\ressolver$.
\begin{lem}
\label{lem:residual-solver-runtime}$\ressolver$ uses $O(d^{1/3}\log n)$
linear system solves.
\end{lem}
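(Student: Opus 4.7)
The plan is to mirror the two-case split used in Appendix B for the $\ell_\infty$-regression subsolver, but adapted to the modified weights $p^{(t)} = 2(\|w\|_1+d)s + \frac{MC^2}{2}r^{(t)}$ that appear in $\ressolver$. I will write $T_{hi}$ for the number of iterations in which Case 1 triggers (i.e.\ $\|A\Delta^{(t)}\|_\infty > S = 11d^{1/3}/C$) and $T_{lo}$ for the number of iterations in which Case 2 triggers, and then bound the total iteration count by $T_{hi}+T_{lo}$.

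For $T_{hi}$, the first step is to control the energy growth of the weights actually used in the subproblem, namely $\mathcal{E}(p^{(t)})$. In Case 1, $r^{(t+1)}=r^{(t)}+\mathbf{1}_i$, so $p^{(t+1)}-p^{(t)} = \frac{MC^2}{2}\mathbf{1}_i$. After rescaling by $\frac{2}{MC^2}$, the vector $\frac{2}{MC^2}p^{(t)}$ dominates $r^{(t)} \geq w$, so Lemma \ref{lem:energy-lewis-weight} applies with the unit vector $v=\mathbf{1}_i$; multiplying back by $\frac{MC^2}{2}$ gives a per-iteration increase of at least $\frac{MC^2}{4}\|A\Delta^{(t)}\|_\infty^2 \geq \frac{MC^2}{4}S^2 = \Omega(Md^{2/3})$. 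On the other hand, whenever the algorithm does not return a dual certificate at iteration $t$, the stopping check fails, and combining this with the loop invariant $\|r^{(t)}\|_1 \leq 2(\|w\|_1+d)\leq 6d$ yields $\mathcal{E}(p^{(t)}) = \langle p^{(t)}, (A\Delta^{(t)})^2\rangle = O(Md)$. Dividing, $T_{hi} = O(d^{1/3})$.

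For $T_{lo}$, I will follow the structure of the proof of Lemma \ref{lem:Linf-low-width-iter}. Writing $\alpha_j^{(t)} = r_j^{(t+1)}/r_j^{(t)}$, Case 2 only updates coordinates with $(A\Delta^{(t)})_j^2 \geq 100/C^2$, and on these coordinates $\alpha_j^{(t)} = \frac{C^2}{52}(A\Delta^{(t)})_j^2 \in [\tfrac{100}{52},\, \tfrac{121 d^{2/3}}{52}]$. The fact that the averaging check $\|Av^{(t')}\|_\infty/t' \leq 11/C$ has not fired forces the existence of a coordinate $i$ with $\sum_t |(A\Delta^{(t)})_i| > 11 T_{lo}/C$; splitting off the small entries ($|(A\Delta^{(t)})_i|<10/C$, which contribute at most $10T_{lo}/C$) forces the remaining updated entries to satisfy $\sum_{t\in U_i}\sqrt{\alpha_i^{(t)}} = \Omega(T_{lo})$. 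Invoking Lemma A.1 of \cite{ene2019improved} exactly as in the proof of Lemma \ref{lem:Linf-low-width-iter} gives
\[
\prod_{t \in U_i}\alpha_i^{(t)} \;\geq\; \min\!\Bigl\{\,2^{\Omega(T_{lo})},\; d^{\Omega(T_{lo}/d^{1/3})}\,\Bigr\}.
\]
This product is at most $r_i^{(T)}/r_i^{(0)} \leq \|r^{(T)}\|_1/(d/n) = O(n)$, since $r_i^{(0)} \geq d/n$ and the while loop keeps $\|r^{(T)}\|_1 \leq 6d$. Taking logarithms yields $T_{lo} = O(d^{1/3}\log n/\log d + \log n) = O(d^{1/3}\log n)$.

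Adding the two bounds gives $T = T_{hi}+T_{lo} = O(d^{1/3}\log n)$, and since each iteration invokes the linear-system solver exactly once (at Line 4 to compute $\Delta^{(t)}$), this is the claimed bound. The main obstacle I expect is the first step: the energy-increase lemmas are stated for weights that dominate the Lewis overestimate $w$ and for unit-norm increments, whereas in $\ressolver$ the effective weight vector is $p^{(t)}$, which contains the extra $s$-term and is updated by a non-unit multiple of $\mathbf{1}_i$ in Case 1 and by a multiplicative re-weighting in Case 2. The rescaling trick $p^{(t)} \mapsto \frac{2}{MC^2}p^{(t)}$ is clean for Case 1, but care is needed in Case 2 to verify that the hypothesis $r^{(t)} \geq w$ is preserved throughout so that both energy lemmas remain applicable and the constants ($11$, $13$, $52$, $100$) chosen in Algorithm \ref{alg:residual-solver} line up to yield the claimed energy upper bound of $O(Md)$ when the dual check fails.
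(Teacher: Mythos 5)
Your proof is correct and follows essentially the same approach as the paper's: you split the iteration count into $T_{hi}$ and $T_{lo}$, bound $T_{hi}$ by comparing a per-iteration energy gain of $\Omega(Md^{2/3})$ (obtained from Lemma \ref{lem:energy-lewis-weight} after the $\frac{2}{MC^2}$ rescaling, which the paper performs implicitly inside Lemma \ref{lem:res-invariant}) against the $O(Md)$ cap on $\cE(p^{(t)})$ forced by the failure of the dual stopping check and the loop invariant $\|r^{(t)}\|_1 \le 6d$, and bound $T_{lo}$ by the same multiplicative-growth/product argument used in Lemma \ref{lem:Linf-low-width-iter}, noting that $\alpha_i^{(t)} \in [\frac{100}{52}, \frac{121d^{2/3}}{52}]$ and $r_i^{(T)}/r_i^{(0)} = O(n)$. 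The concern you raise about preserving $r^{(t)}\ge w$ in Case 2 resolves immediately since Case 2 is a monotone (multiplicative) increase of $r$ starting from $r^{(0)}=w+d/n \ge w$, and Lemma \ref{lem:energy-usual} needs only $p^{(t+1)}\ge p^{(t)}$, so no further obstacle exists.
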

Finally, to conclude the proof of Theorem \ref{thm:QSC-main-theorem},
we only have to combine Lemmas \ref{lem:qsc-main-convergence}, \ref{lem:qsc-main-calls-to-residual},
\ref{lem:residual-solver-runtime}.

\subsection{Proof of Lemma \ref{lem:residual-solver-guarantee}\protect\label{subsec:residual-solver-guarantee}}

In this section, we prove the approximation guarantee of the $\ressolver$
stated in Lemma \ref{lem:residual-solver-guarantee}. Let $x$ and
$M$ be the input to the $\ressolver$. As before, we let $\Delta^{*}\in\max_{\Delta\colon\|A\Delta\|_{\infty}\le1/C}\res_{x}(\Delta)$
and $\opt=\res_{x}(\Delta^{*})$ . As in Section \ref{sec:Generalization-to-QSC},
we let $g_{i}=\frac{-1}{M}\left(A^{\top}\nabla f(x)\right)_{i}$ and
$s_{i}=f''\left((Ax)_{i}\right)$ for all $i$.

We start with the following lemma.
\begin{lem}
\label{lem:solution-guarantee}Consider an iterate $x$ and a guess
$M$. Consider an algorithm that takes as input $x$ and $M$ and
it outputs a solution to the Problem \ref{eq:new-residual-problem}.
If $\opt\in(\frac{M}{2},M]$ and the algorithm satisfies Invariant
\ref{inv:invariant1}, then the algorithm outputs a solution $\Delta$
such that for $\widehat{\Delta}=\frac{\Delta}{11},$we have $\|A\widehat{\Delta}\|_{\infty}\le\frac{1}{C}$
and $\res_{x}(\widehat{\Delta})\ge\frac{\opt}{20}.$
\end{lem}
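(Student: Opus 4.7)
The plan is to use the hypothesis $\opt\in(M/2,M]$ first to rule out the certification branch of Invariant~\ref{inv:invariant1}, and then to verify directly that rescaling the returned primal solution by $1/11$ produces a feasible point for the residual problem whose value is at least $\opt/20$.

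First I would exhibit a feasible $\widetilde\Delta$ for Problem~(\ref{eq:new-residual-problem}) with combined objective strictly below $13M$. Let $\Delta^{\ast}$ attain $\opt$ in the residual problem, write $a=\nabla f(x)^{\top}(A\Delta^{\ast})$ and $b=\langle s,(A\Delta^{\ast})^{2}\rangle=(A\Delta^{\ast})^{\top}\nabla^{2}f(x)(A\Delta^{\ast})$, and set $\alpha=a/M$, so that $\widetilde\Delta:=\Delta^{\ast}/\alpha$ satisfies $g^{\top}\widetilde\Delta=-1$. Using $\opt=a-b/e$, the hypothesis $\opt\in(M/2,M]$ gives $a>M/2$ and, together with the Hessian bound $b\le eM$ from Lemma~\ref{lem:bin-search-opt-1}, also $a\le 2M$. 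Hence $\alpha\in(1/2,2]$. Since $\|A\Delta^{\ast}\|_{\infty}\le 1/C$, scaling by $1/\alpha$ yields $\|A\widetilde\Delta\|_{\infty}\le 2/C$ and $\langle s,(A\widetilde\Delta)^{2}\rangle=b/\alpha^{2}\le 4b\le 4eM$, so
\[
\bigl\langle s,(A\widetilde\Delta)^{2}\bigr\rangle+\tfrac{MC^{2}}{2}\|A\widetilde\Delta\|_{\infty}^{2}\;\le\;4eM+2M\;=\;(4e+2)M\;<\;13M.
\]
Therefore the minimum of Problem~(\ref{eq:new-residual-problem}) is strictly below $13M$, ruling out the certification branch. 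By Invariant~\ref{inv:invariant1} the algorithm must return a primal $\Delta$ with $g^{\top}\Delta=-1$, $\|A\Delta\|_{\infty}\le 11/C$, and $\langle s,(A\Delta)^{2}\rangle\le(4e+2)M<13M$.

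Next I would set $\widehat\Delta:=\Delta/11$ and estimate $\res_{x}(\widehat\Delta)$. Feasibility for the residual problem is immediate from $\|A\widehat\Delta\|_{\infty}=\|A\Delta\|_{\infty}/11\le 1/C$. The constraint $g^{\top}\Delta=-1$ unwinds to $\nabla f(x)^{\top}(A\Delta)=M$, hence
\[
\res_{x}(\widehat\Delta)=\tfrac{1}{11}\nabla f(x)^{\top}(A\Delta)-\tfrac{1}{121e}\bigl\langle s,(A\Delta)^{2}\bigr\rangle\;\ge\;\tfrac{M}{11}-\tfrac{13M}{121e}\;=\;\tfrac{11e-13}{121e}\,M.
\]
A one-line numerical check shows $(11e-13)/(121e)\ge 1/20$ (equivalent to $99e\ge 260$, which holds since $e>2.626$), giving $\res_{x}(\widehat\Delta)\ge M/20\ge \opt/20$ as required.

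The most delicate step is the first one: we must rescale $\Delta^{\ast}$ so that $g^{\top}\widetilde\Delta=-1$ while keeping both the $\langle s,\cdot\rangle$ term and the $\ell_{\infty}^{2}$ term below the algorithm's $13M$ certification threshold. The two-sided bound $\alpha\in(1/2,2]$ is what makes this work, and it uses both sides of the interval $\opt\in(M/2,M]$ together with the Hessian control from Lemma~\ref{lem:bin-search-opt-1}. After that, the remaining argument is a short comparison between the $1/11$ rescaling and the $13M$ upper bound on $\langle s,(A\Delta)^{2}\rangle$, where the constants $11$ and $13$ have been chosen precisely so that $(11e-13)/(121e)\ge 1/20$.
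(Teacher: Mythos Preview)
Your proof is correct and follows essentially the same approach as the paper: both rescale $\Delta^{*}$ to get a feasible point for Problem~(\ref{eq:new-residual-problem}) with objective at most $4eM+2M<13M$, rule out the certification branch, and then check that $\widehat\Delta=\Delta/11$ satisfies $\res_{x}(\widehat\Delta)\ge M/11-\tfrac{13}{121e}M\ge \opt/20$. One minor remark: the upper bound $\alpha\le 2$ you derive is never actually used (only $\alpha>1/2$ is needed to control the rescaled $\ell_{\infty}$ and $\langle s,\cdot\rangle$ terms), so your closing comment that the two-sided bound ``is what makes this work'' slightly overstates its role.
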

\begin{proof}
Let $a=-g^{\top}\Delta^{*}$. Note that $\frac{\Delta^{*}}{a}$ satisfies
$g^{\top}\frac{\Delta^{*}}{a}=-1$. Since $\opt\in(\frac{M}{2},M]$,
we have
\begin{align*}
a & =\frac{\nabla f\left(x\right)^{\top}\left(A\Delta^{*}\right)}{M}\ge\frac{\opt}{M}\ge\frac{1}{2}.
\end{align*}
Moreover since $\left\Vert A\Delta^{*}\right\Vert _{\infty}\le\frac{1}{C}$
and from Lemma \ref{lem:bin-search-opt}, we have 
\begin{align*}
\left\langle s,\left(A\frac{\Delta^{*}}{a}\right)^{2}\right\rangle +\frac{MC^{2}}{2}\left\Vert A\frac{\Delta^{*}}{a}\right\Vert _{\infty}^{2} & \le4eM+2M<13M.
\end{align*}
This means, our algorithm will output a solution $\Delta$ that satisfies
\begin{align*}
\nabla f\left(x\right)^{\top}\left(A\Delta\right) & =M\\
\left\Vert A\Delta\right\Vert _{\infty} & \le\frac{11}{C}\\
\left\langle s,\left(A\Delta\right)^{2}\right\rangle  & \le\min_{g^{\top}\Delta=-1}\left(\left\langle s,\left(A\Delta\right)^{2}\right\rangle +\frac{MC^{2}}{2}\left\Vert A\Delta\right\Vert _{\infty}^{2}\right)<13M
\end{align*}
Let $\widehat{\Delta}=\frac{\Delta}{11},$we have 
\begin{align*}
\nabla f\left(x\right)^{\top}\left(A\widehat{\Delta}\right) & =\frac{M}{11}\\
\left\Vert A\widehat{\Delta}\right\Vert _{\infty} & \le\frac{1}{C}\\
\left\langle s,\left(A\widehat{\Delta}\right)^{2}\right\rangle  & <\frac{13}{121}M
\end{align*}
which gives us 
\begin{align*}
\res_{x}\left(\widehat{\Delta}\right) & \ge\frac{M}{11}-\frac{1}{e}\cdot\frac{13}{121}M>\frac{M}{20}\ge\frac{\opt}{20}.
\end{align*}
\end{proof}

Thus it only remains to show that $\ressolver$ satisfies the aforementioned
invariant. Notice that the Problem \ref{eq:new-residual-problem}
has a similar structure as an $\ell_{\infty}$-regression problem
solved in the previous section, albeit with an additional quadratic
term $\left\langle s,(A\Delta)^{2}\right\rangle $. Fortunately, the
algorithm only needs to return a constant factor approximation, instead
of a $1+\epsilon$ approximation. This allows us to extend the approach
we used for Algorithm \ref{alg:Linf-regression}, and use a similar
analysis.

In the remainder of this section, we show the following lemma, using
similar ideas as in our analysis of our $\ell_{\infty}$ regression
algorithm.
\begin{lem}
\label{lem:residual-solver-invariant}Algorithm \ref{alg:residual-solver}
satisfies Invariant \ref{inv:invariant1}.
\end{lem}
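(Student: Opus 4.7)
The plan is to verify Invariant~\ref{inv:invariant1} by case analysis on the exit point of Algorithm~\ref{alg:residual-solver}. Throughout, set $\alpha=2(\|w\|_{1}+d)$ and $\beta=MC^{2}/2$, so that $p^{(t)}=\alpha s+\beta r^{(t)}$, and define the weighted electrical energy
\[
\cE_{p}(r)\;=\;\min_{g^{\top}\Delta=-1}\bigl\langle \alpha s+\beta r,(A\Delta)^{2}\bigr\rangle,
\]
which equals $\langle p^{(t)},(A\Delta^{(t)})^{2}\rangle$ at the iterate computed in Line~4. Since $\Delta^{(t)}$ also minimizes $\langle s+\tfrac{\beta}{\alpha}r^{(t)},(A\Delta)^{2}\rangle$, the choice $q=r^{(t)}/\alpha$ is a valid dual variable (i.e.\ $\|q\|_{1}\le 1$) while the while-loop bound $\|r^{(t)}\|_{1}\le\alpha$ holds, and weak duality then gives $P\ge\cE_{p}(r^{(t)})/\alpha$, where $P$ denotes the primal minimum of~\eqref{eq:new-residual-problem}.

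For the two primal returns, I first handle Line~6. Using the optimality of $\Delta^{(t)}$ and $\|r^{(t)}\|_{1}\le\alpha$, for every feasible $\Delta$,
\[
\alpha\langle s,(A\Delta^{(t)})^{2}\rangle\;\le\;\langle p^{(t)},(A\Delta^{(t)})^{2}\rangle\;\le\;\langle p^{(t)},(A\Delta)^{2}\rangle\;\le\;\alpha\bigl(\langle s,(A\Delta)^{2}\rangle+\tfrac{MC^{2}}{2}\|A\Delta\|_{\infty}^{2}\bigr);
\]
dividing by $\alpha$ and minimizing over $\Delta$ yields the objective bound, while Line~6's guard gives $\|A\Delta^{(t)}\|_{\infty}\le 11/C$. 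For the averaged point $v^{(t')}/t'$ returned at Line~12, linearity gives $g^{\top}(v^{(t')}/t')=-1$, Line~12's guard supplies the norm bound, and Jensen's inequality $(Av^{(t')}/t')_{i}^{2}\le\tfrac{1}{t'}\sum_{j}(A\Delta^{(j)})_{i}^{2}$ combined with the per-iterate objective bound above gives the objective bound.

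Next, I would establish the energy-growth invariant
\[
\cE_{p}(r^{(t+1)})-\cE_{p}(r^{(t)})\;\ge\;26M\bigl(\|r^{(t+1)}\|_{1}-\|r^{(t)}\|_{1}\bigr).
\]
In Case~1 the update is $p^{(t+1)}=p^{(t)}+\beta\mathbf{1}_{i}$. Since $r^{(t)}\ge r^{(0)}\ge w$ gives $p^{(t)}/\beta\ge w$, Lemma~\ref{lem:energy-lewis-weight} applied to the weights $p^{(t)}/\beta$ with unit-$\ell_{1}$ increment $\mathbf{1}_{i}$ delivers $\cE_{p}(r^{(t+1)})-\cE_{p}(r^{(t)})\ge\tfrac{\beta}{2}(A\Delta^{(t)})_{i}^{2}\ge\tfrac{\beta S^{2}}{2}=\tfrac{121\,Md^{2/3}}{4}$, which exceeds $26M$ for all $d\ge 1$. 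In Case~2, I apply Lemma~\ref{lem:energy-usual} with weights $p^{(t)}$ and $p^{(t+1)}$; for each updated coordinate $i$, writing $u_{i}=(A\Delta^{(t)})_{i}^{2}C^{2}$, the multiplicative rule $r_{i}^{(t+1)}=(u_{i}/52)r_{i}^{(t)}$ together with the threshold $u_{i}\ge 100$ gives, after a short algebraic calculation, $(A\Delta^{(t)})_{i}^{2}\,p_{i}^{(t)}\bigl(1-p_{i}^{(t)}/p_{i}^{(t+1)}\bigr)\ge 26M\,(r_{i}^{(t+1)}-r_{i}^{(t)})$; the key inequality reduces to $u_{i}\ge 52$, which holds by design. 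Summing over the updated coordinates gives the invariant in Case~2.

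Finally, I convert the energy invariant into the dual certificate. Telescoping together with $\|r^{(0)}\|_{1}=\|w\|_{1}+d=\alpha/2$ yields $\cE_{p}(r^{(t)})/\alpha\ge 26M(\|r^{(t)}\|_{1}-\alpha/2)/\alpha$, which reaches $13M$ once $\|r^{(t)}\|_{1}\ge\alpha$. Since Line~5's check value dominates $\cE_{p}(r^{(t)})/\alpha$ throughout the loop, the check must fire by the time $\|r^{(t)}\|_{1}$ reaches $\alpha$, and weak duality then certifies $P\ge 13M$; the same argument covers the post-loop Line~15 return. The main obstacle I anticipate is the delicate rescaling needed to apply Lemma~\ref{lem:energy-lewis-weight} to the shifted weights $p^{(t)}=\alpha s+\beta r^{(t)}$ and the verification that the specific constants $52$ and $100$ in the Case~2 update rule are precisely those that produce the $26M$ slope matching the $13M$ certification threshold.
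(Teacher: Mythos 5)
Your proposal mirrors the paper's own proof: it decomposes the invariant into exactly the same sub-claims as Lemmas \ref{lem:quadratic-term-guarantee}, \ref{lem:res-invariant}, and \ref{lem:primal-output} (the per-iterate bound on $\langle s,(A\Delta^{(t)})^{2}\rangle$ via optimality of $\Delta^{(t)}$ and the loop bound $\|r^{(t)}\|_{1}\le 2(\|w\|_{1}+d)$; the $26M$ energy-growth slope via Lemma \ref{lem:energy-lewis-weight} applied to the rescaled weights $p^{(t)}/\beta$ in Case 1 and Lemma \ref{lem:energy-usual} together with $p^{(t)}_{j}/p^{(t+1)}_{j}\ge r^{(t)}_{j}/r^{(t+1)}_{j}$ in Case 2; convexity/Jensen for the averaged primal return; and telescoping plus weak duality for the dual certificate). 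The arithmetic matches the paper too — $\beta S^{2}/2=121Md^{2/3}/4\ge 26M$, and the Case-2 step is the exact identity $\frac{MC^{2}}{2}(A\Delta^{(t)})_{j}^{2}\,r_{j}^{(t)}/r_{j}^{(t+1)}=26M$ coming from the update rule, with the threshold $u_{j}\ge 100$ needed only to guarantee $r_{j}^{(t+1)}\ge r_{j}^{(t)}$ so that Lemma \ref{lem:energy-usual} and the mediant inequality apply.
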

By combining Lemmas \ref{lem:solution-guarantee} and \ref{lem:residual-solver-invariant},
we obtain Lemma \ref{lem:main-residual-solver-lemma}.

We proceed with the proof Lemma \ref{lem:residual-solver-invariant}
by showing the following lemmas.
\begin{lem}
\label{lem:quadratic-term-guarantee}For all iterations $t\ge1$ of
$\ressolver$, we have 
\begin{align*}
\left\langle s,\left(A\Delta^{(t)}\right)^{2}\right\rangle  & \le\min_{g^{\top}\Delta=-1}\left\langle s,\left(A\Delta\right)^{2}\right\rangle +\frac{MC^{2}}{2}\left\Vert A\Delta\right\Vert _{\infty}^{2}.
\end{align*}
\end{lem}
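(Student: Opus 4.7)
The plan is to exploit the optimality of $\Delta^{(t)}$ for the weighted least squares problem with weights $p^{(t)} = 2(\|w\|_1 + d)s + \frac{MC^2}{2} r^{(t)}$, and then trade the $\ell_2$-type term $\langle r^{(t)}, (A\Delta)^2\rangle$ for an $\ell_\infty^2$ term using the while-loop invariant $\|r^{(t)}\|_1 \leq 2(\|w\|_1 + d)$. The coefficients in $p^{(t)}$ are chosen precisely so that this trade is lossless and yields the $\frac{MC^2}{2}\|A\Delta\|_\infty^2$ slack that appears in the statement.

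Concretely, for any feasible $\Delta$ with $g^\top \Delta = -1$, the optimality of $\Delta^{(t)}$ in Line~4 of Algorithm~\ref{alg:residual-solver} gives $\langle p^{(t)}, (A\Delta^{(t)})^2\rangle \leq \langle p^{(t)}, (A\Delta)^2\rangle$. Substituting the definition of $p^{(t)}$ and dividing through by $2(\|w\|_1 + d)$ yields
\begin{align*}
\langle s, (A\Delta^{(t)})^2\rangle + \frac{MC^2}{4(\|w\|_1+d)}\langle r^{(t)}, (A\Delta^{(t)})^2\rangle \leq \langle s, (A\Delta)^2\rangle + \frac{MC^2}{4(\|w\|_1+d)}\langle r^{(t)}, (A\Delta)^2\rangle.
\end{align*}
The term $\frac{MC^2}{4(\|w\|_1+d)}\langle r^{(t)}, (A\Delta^{(t)})^2\rangle$ on the left is nonnegative (since $r^{(t)} \geq 0$) and can be discarded. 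On the right, I would use the bound $\langle r^{(t)}, (A\Delta)^2\rangle \leq \|r^{(t)}\|_1 \|A\Delta\|_\infty^2$ together with the loop invariant $\|r^{(t)}\|_1 \leq 2(\|w\|_1 + d)$, which holds throughout the execution, to conclude $\frac{MC^2}{4(\|w\|_1+d)}\langle r^{(t)}, (A\Delta)^2\rangle \leq \frac{MC^2}{2}\|A\Delta\|_\infty^2$.

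Combining these inequalities gives $\langle s, (A\Delta^{(t)})^2\rangle \leq \langle s, (A\Delta)^2\rangle + \frac{MC^2}{2}\|A\Delta\|_\infty^2$ for every feasible $\Delta$, and taking the minimum over such $\Delta$ yields the claim. The only subtle point — and what I expect to be the mildest obstacle — is verifying that the while-loop guard $\|r^{(t)}\|_1 \leq 2(\|w\|_1 + d)$ is in force at the moment $\Delta^{(t)}$ is computed, so that the coefficient $2(\|w\|_1+d)$ in $p^{(t)}$ really does dominate $\|r^{(t)}\|_1$. This is immediate from the structure of the algorithm: the weight vector $r^{(t)}$ used in Line~4 is the one tested against the loop guard at the start of the iteration, so the inequality holds by construction for every $t \geq 1$ for which the algorithm reaches Line~4.
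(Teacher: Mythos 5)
Your proof is correct and follows essentially the same route as the paper: both rely on the optimality of $\Delta^{(t)}$ for the weighted least squares objective with weights $p^{(t)}$, drop the nonnegative $\langle r^{(t)}, (A\Delta^{(t)})^2\rangle$ term, and convert the remaining $\langle r^{(t)}, (A\Delta)^2\rangle$ contribution into an $\frac{MC^2}{2}\|A\Delta\|_\infty^2$ slack via $\langle r^{(t)}, (A\Delta)^2\rangle \le \|r^{(t)}\|_1\|A\Delta\|_\infty^2$ and the loop guard $\|r^{(t)}\|_1 \le 2(\|w\|_1+d)$. The paper applies these two bounds in the opposite order and compares directly against the minimizer $\Delta^*$ rather than an arbitrary feasible $\Delta$, but that is a cosmetic difference.
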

\begin{proof}
Let $\Delta^{*}=\arg\min_{g^{\top}\Delta=-1}\left\langle s,\left(A\Delta\right)^{2}\right\rangle +\frac{MC^{2}}{2}\left\Vert A\Delta\right\Vert _{\infty}^{2}$.
We have 
\begin{align*}
 & 2\left(\left\Vert w\right\Vert _{1}+d\right)\left\langle s,\left(A\Delta^{(t)}\right)^{2}\right\rangle \\
 & \le\left\langle \frac{MC^{2}}{2}r^{(t)}+2\left(\left\Vert w\right\Vert _{1}+d\right)s,\left(A\Delta^{(t)}\right)\right\rangle \\
 & \le\left\langle \frac{MC^{2}}{2}r^{(t)}+2\left(\left\Vert w\right\Vert _{1}+d\right)s,\left(A\Delta^{*}\right)\right\rangle \qquad\text{due to the optimality of }x^{(t)}\\
 & \le2\left(\left\Vert w\right\Vert _{1}+d\right)\left\langle \frac{MC^{2}}{2}\frac{r^{(t)}}{\left\Vert r^{(t)}\right\Vert _{1}}+s,\left(A\Delta^{*}\right)^{2}\right\rangle \qquad\text{since }\left\Vert r^{(t)}\right\Vert _{1}\le2\left(\left\Vert w\right\Vert _{1}+d\right)
\end{align*}
which gives
\begin{align*}
\left\langle s,\left(A\Delta^{(t)}\right)^{2}\right\rangle  & \le\left\langle \frac{MC^{2}}{2}\frac{r^{(t)}}{\left\Vert r^{(t)}\right\Vert _{1}}+s,\left(A\Delta^{*}\right)^{2}\right\rangle \\
 & \le\left\langle s,\left(A\Delta^{*}\right)^{2}\right\rangle +\frac{MC^{2}}{2}\left\Vert A\Delta^{*}\right\Vert _{\infty}^{2},
\end{align*}
as needed.
\end{proof}

\begin{lem}
\label{lem:res-invariant}$\ressolver$ maintains the invariant that,
for all $t\ge0$,
\begin{align*}
\frac{\cE\left(p^{(t+1)}\right)-\cE\left(p^{(t)}\right)}{\left\Vert r^{(t+1)}\right\Vert _{1}-\left\Vert r^{(t)}\right\Vert _{1}} & \ge26M.
\end{align*}
\end{lem}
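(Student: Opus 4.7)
The plan is to mirror the structure of Lemma \ref{lem:Linf-invariant}, splitting on the two cases of Algorithm \ref{alg:residual-solver} and applying one of the two energy lemmas to each, while accounting for the additional fixed offset $2(\|w\|_1+d)s$ that appears in $p^{(t)}$. I first record that $r^{(t)}$ grows monotonically over iterations: in Case 1 we add a non-negative indicator, and in Case 2 only coordinates with $(A\Delta^{(t)})_j^2 \ge 100/C^2$ are modified, which multiplies $r_j^{(t)}$ by $(A\Delta^{(t)})_j^2 C^2/52 \ge 100/52 > 1$. Hence $r^{(t)} \ge r^{(0)} = w + d/n \ge w$, which makes the Lewis-weight energy lemma applicable. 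Because $p^{(t)}$ contains a factor of $MC^2/2$ in front of $r^{(t)}$, the raw Case 1 increment $\tfrac{MC^2}{2}\mathbf{1}_i$ may have $\ell_1$ norm exceeding $1$, which would violate the hypothesis of Lemma \ref{lem:energy-lewis-weight}. I sidestep this by defining $\tilde p^{(t)} := \tfrac{2}{MC^2} p^{(t)}$, which still satisfies $\tilde p^{(t)} \ge r^{(t)} \ge w$. Positive $1$-homogeneity of $\cE$ gives $\cE(p) = \tfrac{MC^2}{2}\cE(\tilde p)$, and the constrained minimizer $\Delta^{(t)}$ is unchanged.

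In Case 1, we have $\tilde p^{(t+1)} = \tilde p^{(t)} + \mathbf{1}_i$ with $\|\mathbf{1}_i\|_1 = 1$, so Lemma \ref{lem:energy-lewis-weight} yields $\cE(\tilde p^{(t+1)}) - \cE(\tilde p^{(t)}) \ge \tfrac{1}{2}(A\Delta^{(t)})_i^2$. Rescaling and using $(A\Delta^{(t)})_i^2 = \|A\Delta^{(t)}\|_\infty^2 > S^2 = 121\,d^{2/3}/C^2$, I obtain $\cE(p^{(t+1)}) - \cE(p^{(t)}) \ge \tfrac{121\,M\,d^{2/3}}{4} \ge 30M$; the denominator is $\|r^{(t+1)}\|_1 - \|r^{(t)}\|_1 = 1$, so the claimed ratio holds. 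In Case 2, I apply Lemma \ref{lem:energy-usual} and treat each updated coordinate $j$ separately. After simplification, the contribution of such a $j$ to the numerator equals $\tfrac{MC^2}{2}(A\Delta^{(t)})_j^2 \tfrac{p_j^{(t)}}{p_j^{(t+1)}}$ times its contribution $r_j^{(t+1)} - r_j^{(t)}$ to the denominator. The elementary inequality $\tfrac{a+x}{a+y} \ge \tfrac{x}{y}$ for $a \ge 0$ and $y \ge x > 0$, applied to the definition of $p$, gives $\tfrac{p_j^{(t)}}{p_j^{(t+1)}} \ge \tfrac{r_j^{(t)}}{r_j^{(t+1)}} = \tfrac{52}{(A\Delta^{(t)})_j^2 C^2}$, which makes the per-coordinate ratio at least $26M$. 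Since the overall ratio is a convex combination of these per-coordinate ratios weighted by $r_j^{(t+1)} - r_j^{(t)}$, it is also $\ge 26M$.

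The main subtlety lies in Case 1: the Lewis-weight energy lemma requires $\|v\|_1 \le 1$, which the direct update would violate whenever $MC^2 > 2$. The rescaling step using the $1$-homogeneity of $\cE$ removes this extra factor cleanly and keeps the argument parallel to Lemma \ref{lem:Linf-invariant}. Elsewhere the algebra is routine, and the constants $100$, $52$, and the threshold $S = 11d^{1/3}/C$ in the algorithm are calibrated so that the two cases produce the matching lower bound of exactly $26M$.
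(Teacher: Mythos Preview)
Your proof is correct and follows the same two-case structure as the paper's, invoking Lemma \ref{lem:energy-lewis-weight} in Case 1 and Lemma \ref{lem:energy-usual} together with the inequality $p_j^{(t)}/p_j^{(t+1)}\ge r_j^{(t)}/r_j^{(t+1)}$ in Case 2. Your explicit rescaling $\tilde p^{(t)} = \tfrac{2}{MC^2}\,p^{(t)}$ in Case 1 makes the hypothesis $\|v\|_1\le 1$ of Lemma \ref{lem:energy-lewis-weight} hold cleanly---a step the paper leaves implicit---but otherwise the arguments coincide.
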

\begin{proof}
When $\left\Vert A\Delta^{(t)}\right\Vert _{\infty}\ge S=\frac{11d^{\frac{1}{3}}}{C}$,
we have $r_{j}^{(t+1)}=\begin{cases}
r_{j}^{(t)} & j\neq i\\
r_{j}^{(t)}+1 & j=i
\end{cases}$, for a coordinate $i$ such that $\left(A\Delta^{(t)}\right)_{i}=\left\Vert A\Delta^{(t)}\right\Vert _{\infty}$.
Let $v=\one_{i}$ be the indicator vector for coordinate $i$. We
have $r^{(t+1)}=r^{(t)}+v$ and $\left\Vert v\right\Vert _{1}=1$.
Using Lemma \ref{lem:energy-lewis-weight} we have 
\begin{align*}
\frac{\cE\left(p^{(t+1)}\right)-\cE\left(p^{(t)}\right)}{\left\Vert r^{(t+1)}\right\Vert _{1}-\left\Vert r^{(t)}\right\Vert _{1}} & \ge\frac{\frac{MC^{2}}{2}\cdot\frac{1}{2}v_{i}\left(A\Delta^{(t)}\right)_{i}^{2}}{v_{i}}\ge\frac{MC^{2}S^{2}}{4}=26Md^{\frac{2}{3}}\ge26M.
\end{align*}
When $\left\Vert A\Delta^{(t)}\right\Vert _{\infty}<S$, for each
coordinate $j$ such that $r_{j}^{(t+1)}>r_{j}^{(t)}$ we have $r_{j}^{(t+1)}=\frac{1}{52}r_{j}^{(t)}\left(A\Delta^{(t)}\right)_{j}^{2}C^{2}$.
Note that $\frac{p^{(t)}}{p^{(t+1)}}\ge\frac{r_{j}^{(t)}}{r_{j}^{(t+1)}}$.
Hence,
\begin{align*}
\frac{\left(A\Delta^{(t)}\right)_{j}^{2}p_{j}^{(t)}\left(1-\frac{p_{j}^{(t)}}{p_{j}^{(t+1)}}\right)}{r_{j}^{(t+1)}-r_{j}^{(t)}} & =\frac{\left(A\Delta^{(t)}\right)_{j}^{2}\frac{p_{j}^{(t)}}{p_{j}^{(t+1)}}\left(p_{j}^{(t+1)}-p_{j}^{(t)}\right)}{r_{j}^{(t+1)}-r_{j}^{(t)}}\\
 & \ge\frac{\frac{MC^{2}}{2}\left(A\Delta^{(t)}\right)_{j}^{2}\frac{r_{j}^{(t)}}{r_{j}^{(t+1)}}\left(r_{j}^{(t+1)}-r_{j}^{(t)}\right)}{r_{j}^{(t+1)}-r_{j}^{(t)}}\\
 & =26M.
\end{align*}
 by Lemma \ref{lem:energy-usual}, 
\begin{align*}
\frac{\cE\left(p^{(t+1)}\right)-\cE\left(p^{(t)}\right)}{\left\Vert r^{(t+1)}\right\Vert _{1}-\left\Vert r^{(t)}\right\Vert _{1}} & \ge\frac{\sum_{j}\left(A\Delta^{(t)}\right)_{j}^{2}p_{j}^{(t)}\left(1-\frac{p_{j}^{(t)}}{p_{j}^{(t+1)}}\right)}{\sum_{j}\left(r_{j}^{(t+1)}-r_{j}^{(t)}\right)}\\
 & \ge\frac{\sum_{j:r_{j}^{(t+1)}>r_{j}^{(t)}}\left(A\Delta^{(t)}\right)_{j}^{2}p_{j}^{(t)}\left(1-\frac{p_{j}^{(t)}}{p_{j}^{(t+1)}}\right)}{\sum_{j:r_{j}^{(t+1)}>r_{j}^{(t)}}\left(r_{j}^{(t+1)}-r_{j}^{(t)}\right)}\\
 & \ge26M.
\end{align*}
\end{proof}

\begin{lem}
If $\ressolver$ returns a dual solution $r^{(T)}$ then $\cE\left(s+\frac{MC^{2}}{2\left\Vert r^{(T)}\right\Vert _{1}}r^{(T)}\right)\ge13M.$
\end{lem}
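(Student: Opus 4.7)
The argument splits by the branch that returns $r^{(T)}$. In Case A, the check in Line 5 of Algorithm \ref{alg:residual-solver} fires at iteration $T$, so $\langle q, (A\Delta^{(T)})^{2}\rangle \ge 13M$ for $q := s + \frac{MC^{2}}{2\|r^{(T)}\|_{1}} r^{(T)}$, and $\|r^{(T)}\|_{1} \le 2(\|w\|_{1}+d)$ since we were inside the while loop body; in Case B, the while loop condition fails, so $\|r^{(T)}\|_{1} > 2(\|w\|_{1}+d)$. In both cases my strategy is first to lower-bound $\cE(p^{(T)})$ and then transfer the bound to $\cE(q)$, using that $\cE$ is monotone, positively $1$-homogeneous, and superadditive in the resistance vector (the last two follow from $\cE$ being a minimum of linear functions, hence concave and positively $1$-homogeneous).

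In Case B, telescoping Lemma \ref{lem:res-invariant} across the run and discarding the nonnegative $\cE(p^{(0)})$ gives $\cE(p^{(T)}) \ge 26M(\|r^{(T)}\|_{1} - \|r^{(0)}\|_{1})$. Since $\|r^{(0)}\|_{1} = \|w\|_{1}+d$ and the case hypothesis is $\|r^{(T)}\|_{1} > 2\|r^{(0)}\|_{1}$, this yields $\cE(p^{(T)}) > 13M\|r^{(T)}\|_{1}$. Next, $\frac{p^{(T)}}{\|r^{(T)}\|_{1}} = \frac{2(\|w\|_{1}+d)}{\|r^{(T)}\|_{1}} s + \frac{MC^{2}}{2\|r^{(T)}\|_{1}} r^{(T)}$ has coefficient of $s$ strictly below $1$ in Case B, hence lies coordinatewise below $q$; monotonicity and homogeneity of $\cE$ then give $\cE(q) \ge \cE(p^{(T)})/\|r^{(T)}\|_{1} > 13M$.

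For Case A, I exploit the identity $p^{(T)} = \|r^{(T)}\|_{1} \cdot q + (2(\|w\|_{1}+d) - \|r^{(T)}\|_{1}) \cdot s$, which is a nonnegative combination under the case hypothesis. Evaluating on $(A\Delta^{(T)})^{2}$ and using $\cE(p^{(T)}) = \langle p^{(T)}, (A\Delta^{(T)})^{2}\rangle$ (by optimality of $\Delta^{(T)}$), together with the Line 5 inequality and $\langle s, (A\Delta^{(T)})^{2}\rangle \ge 0$, yields $\cE(p^{(T)}) \ge 13M\|r^{(T)}\|_{1}$. The reverse decomposition $q = \alpha p^{(T)} + \beta r^{(T)}$ with $\alpha = \frac{1}{2(\|w\|_{1}+d)}$ and $\beta = \frac{MC^{2}(2(\|w\|_{1}+d) - \|r^{(T)}\|_{1})}{4(\|w\|_{1}+d)\|r^{(T)}\|_{1}} \ge 0$, combined with superadditivity of $\cE$, gives $\cE(q) \ge \alpha \cE(p^{(T)}) + \beta \cE(r^{(T)})$. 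The main obstacle is Case A: the superadditivity step alone only certifies $\cE(q) \ge \frac{13M\|r^{(T)}\|_{1}}{2(\|w\|_{1}+d)}$, which can be as small as $\frac{13M}{2}$ at the lower end $\|r^{(T)}\|_{1} = \|r^{(0)}\|_{1}$. I expect to close the remaining numerical gap by retaining the $\beta \cE(r^{(T)}) > 0$ term (which is strictly positive exactly when $\|r^{(T)}\|_{1} < 2(\|w\|_{1}+d)$, i.e., precisely the regime where the monotonicity bound is slack) and by simultaneously invoking the per-step energy growth of Lemma \ref{lem:res-invariant}, which holds at every iteration and therefore is available in Case A as well.
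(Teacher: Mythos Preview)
Your Case B argument (the while-loop exit branch with $\|r^{(T)}\|_{1}>2(\|w\|_{1}+d)$) is correct and is exactly the paper's proof: telescope Lemma~\ref{lem:res-invariant}, then use monotonicity and $1$-homogeneity of $\cE$ to pass from $p^{(T)}/\|r^{(T)}\|_{1}$ to $q=s+\tfrac{MC^{2}}{2\|r^{(T)}\|_{1}}r^{(T)}$. In fact, the paper's own proof treats \emph{only} this branch: it explicitly invokes $\|r^{(T)}\|_{1}\ge2(\|w\|_{1}+d)$ and says nothing about the Line-5 return.

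Your Case A attempt, however, does not close, and the patches you sketch do not rescue it. The superadditivity bound $\cE(q)\ge\alpha\,\cE(p^{(T)})+\beta\,\cE(r^{(T)})$ is valid, but both ``extra'' resources you appeal to vanish in the hard instance: if the Line-5 check fires at the very first iteration $T=0$, the telescoping sum from Lemma~\ref{lem:res-invariant} is empty, and $\beta\,\cE(r^{(0)})=\tfrac{MC^{2}}{4(\|w\|_{1}+d)}\,\cE(r^{(0)})$ carries no a-priori lower bound of order $M$, since nothing in the setup ties $\cE(r^{(0)})$ to $M$ or $C$. More fundamentally, the Line-5 inequality lower-bounds $\langle q,(A\Delta^{(T)})^{2}\rangle$ for a $\Delta^{(T)}$ that minimizes $\langle p^{(T)},\cdot\rangle$, not $\langle q,\cdot\rangle$; because $\cE(q)$ is the \emph{minimum} of $\langle q,\cdot\rangle$ over feasible $\Delta$, this only yields $\cE(q)\le\langle q,(A\Delta^{(T)})^{2}\rangle$, which is the wrong direction. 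So the gap you flag is real, and neither your proposed fixes nor the paper's own proof addresses the Line-5 branch.
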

\begin{proof}
By Lemma \ref{lem:res-invariant}, we have that
\begin{align*}
\sum_{i=0}^{T-1}\cE\left(p^{(t+1)}\right)-\cE\left(p^{(t)}\right) & \ge26M\sum_{i=0}^{T-1}\left\Vert r^{(t+1)}\right\Vert _{1}-\left\Vert r^{(t)}\right\Vert _{1}
\end{align*}
leading to 
\begin{align*}
\cE(p^{(T)}) & \ge26M\left(\left\Vert r^{(T)}\right\Vert _{1}-\left\Vert r^{(0)}\right\Vert _{1}\right)
\end{align*}
Since $\left\Vert r^{(T)}\right\Vert _{1}\ge2\left(\left\Vert w\right\Vert _{1}+d\right)$
\begin{align*}
\cE\left(s+\frac{MC^{2}}{2\left\Vert r^{(T)}\right\Vert _{1}}r^{(T)}\right) & \ge\cE\left(\frac{2\left(\left\Vert w\right\Vert _{1}+d\right)s}{\left\Vert r^{(T)}\right\Vert _{1}}+\frac{MC^{2}}{2\left\Vert r^{(T)}\right\Vert _{1}}r^{(T)}\right)\\
 & \ge26M\left(1-\frac{\left\Vert r^{(0)}\right\Vert _{1}}{\left\Vert r^{(T)}\right\Vert _{1}}\right)\\
 & \ge13M,
\end{align*}
as needed.
\end{proof}

\begin{lem}
\label{lem:primal-output}If $\ressolver$ returns a primal solution
$\Delta$ then $\left\Vert A\Delta\right\Vert _{\infty}\le\frac{11}{C}$
and
\begin{align*}
\left\langle s,\left(A\Delta\right)^{2}\right\rangle  & \le\min_{g^{\top}\Delta=-1}\left\langle s,\left(A\Delta\right)^{2}\right\rangle +\frac{MC^{2}}{2}\left\Vert A\Delta\right\Vert _{\infty}^{2}.
\end{align*}
\begin{proof}
If the algorithm returns $\Delta$, we have $\left\Vert A\Delta\right\Vert _{\infty}\le\frac{11}{C}$,
either by a solution in a single iteration, or the average over iterations
when $\left\Vert A\Delta^{(t)}\right\Vert _{\infty}\le S$. In both
case, by Lemma \ref{lem:quadratic-term-guarantee}, and convexity,
we have
\begin{align*}
\left\langle s,\left(A\Delta\right)^{2}\right\rangle  & \le\min_{g^{\top}\Delta=-1}\left\langle s,\left(A\Delta\right)^{2}\right\rangle +\frac{MC^{2}}{2}\left\Vert A\Delta\right\Vert _{\infty}^{2}.
\end{align*}
\end{proof}
\end{lem}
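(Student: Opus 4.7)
The plan is to inspect the two places where Algorithm~\ref{alg:residual-solver} can output a primal solution and verify both conclusions in each case. The algorithm only returns a primal $\Delta$ either (i) at Line~6 as a single iterate $\Delta^{(t)}$ when $\|A\Delta^{(t)}\|_{\infty}\le\frac{11}{C}$, or (ii) at Line~12 as the average $\Delta=v^{(t')}/t'=\frac{1}{t'}\sum_{s\le t'}\Delta^{(s)}$ of iterates collected while the low-width case (Case~2) triggered, when $\|Av^{(t')}\|_{\infty}/t'\le\frac{11}{C}$. In either case the first conclusion $\|A\Delta\|_{\infty}\le\frac{11}{C}$ is immediate from the respective return condition.

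For the second conclusion, the key observation is that the map $\Delta\mapsto\langle s,(A\Delta)^{2}\rangle$ is convex, because $s_i=f''((Ax)_i)\ge 0$ (from $C$-QSC convexity of $f$) so the quadratic form has PSD weights. In case (i), Lemma~\ref{lem:quadratic-term-guarantee} directly yields the bound $\langle s,(A\Delta^{(t)})^{2}\rangle\le\min_{g^{\top}y=-1}\langle s,(Ay)^{2}\rangle+\frac{MC^{2}}{2}\|Ay\|_{\infty}^{2}$, noting that the right-hand side does not depend on the returned $\Delta$ but only on the optimal solution of the regularized problem.

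In case (ii), each $\Delta^{(s)}$ collected in the averaging step satisfies $g^{\top}\Delta^{(s)}=-1$ (as it is the minimizer of the subproblem $\min_{g^{\top}\Delta=-1}\langle p^{(s)},(A\Delta)^{2}\rangle$), so the average $\Delta=v^{(t')}/t'$ also satisfies $g^{\top}\Delta=-1$. Applying Jensen's inequality to the convex quadratic gives
\begin{align*}
\bigl\langle s,(A\Delta)^{2}\bigr\rangle \ \le\ \frac{1}{t'}\sum_{s\le t'}\bigl\langle s,(A\Delta^{(s)})^{2}\bigr\rangle,
\end{align*}
and then invoking Lemma~\ref{lem:quadratic-term-guarantee} termwise, together with the fact that the upper bound in that lemma is the same constant $\min_{g^{\top}y=-1}\langle s,(Ay)^{2}\rangle+\frac{MC^{2}}{2}\|Ay\|_{\infty}^{2}$ for every iteration, yields the claimed inequality.

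I do not expect any step to be a serious obstacle here; the only subtlety is verifying that Jensen's inequality is applicable (convexity of $\langle s,(A\cdot)^{2}\rangle$) and that the $g^{\top}\Delta=-1$ constraint is preserved under averaging, both of which are straightforward. The heavy lifting has already been done in Lemma~\ref{lem:quadratic-term-guarantee}, which provides the per-iterate bound that the averaging then inherits by convexity.
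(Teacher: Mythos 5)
Your proof is correct and follows essentially the same approach as the paper's: identify the two places a primal $\Delta$ is returned, observe that the $\ell_\infty$ bound is the return condition itself, and for the quadratic bound apply Lemma~\ref{lem:quadratic-term-guarantee} per iterate and then Jensen's inequality (convexity of $\Delta\mapsto\langle s,(A\Delta)^2\rangle$, using $s\ge 0$) when the output is the average $v^{(t')}/t'$. You have merely spelled out what the paper compresses into ``by Lemma~\ref{lem:quadratic-term-guarantee}, and convexity,'' including the observation that each $\Delta^{(t)}$ satisfies $g^\top\Delta^{(t)}=-1$ so the constraint is preserved under averaging; this is a faithful expansion rather than a different route.
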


\subsection{Proof of Lemma \ref{lem:residual-solver-runtime}\protect\label{subsec:residual-solver-runtime}}

In this section, we analyze the running time of the $\ressolver$
(Algorithm \ref{alg:residual-solver}). The running time is dominated
by the linear system solves, and thus we upper bound the number of
calls to the linear system solver. 

We proceed similarly to the analysis of our $\ell_{\infty}$ regression
algorithm. Let $T_{hi}$ and $T_{lo}$be the iterations when $\left\Vert A\Delta^{(t)}\right\Vert _{\infty}>S$
and $\left\Vert A\Delta^{(t)}\right\Vert _{\infty}\le S$, respectively
before the algorithm returns a primal solution or a dual one. We also
abuse the notations and denote by $T_{hi}$ and $T_{lo}$ the numbers
of such iterations, respectively.
\begin{lem}
$T_{hi}\le O\left(d^{1/3}\right).$
\end{lem}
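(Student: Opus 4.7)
The proof will mimic the structure of the analogous $T_{hi}$ bound for the $\ell_\infty$-regression subsolver (Lemma on $T_{hi} \le 6d^{1/3}/\epsilon$), adapted to the electrical energy associated with the reweighted objective $p^{(t)} = 2(\|w\|_1+d)s + \tfrac{MC^2}{2} r^{(t)}$. The idea is to sandwich $\cE(p^{(t)})$ between a per-iteration lower bound coming from Lewis-weight-based energy growth in high-width steps, and an upper bound coming from the condition that the dual return has not yet triggered.

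\textbf{Step 1: Per-step energy lower bound.} For each high-width iteration $t$, the update is $r^{(t+1)} = r^{(t)} + \one_i$ with $|(A\Delta^{(t)})_i| = \|A\Delta^{(t)}\|_\infty > S = 11d^{1/3}/C$. Since $r^{(t)} \ge r^{(0)} \ge w$ throughout the run, Lemma \ref{lem:energy-lewis-weight} (applied exactly as in the Case 1 part of the proof of Lemma \ref{lem:res-invariant}) yields
\begin{align*}
\cE(p^{(t+1)}) - \cE(p^{(t)}) \;\ge\; \tfrac{MC^2}{2}\cdot \tfrac{1}{2}(A\Delta^{(t)})_i^2 \;\ge\; \tfrac{MC^2 S^2}{4} \;=\; 26\, M\, d^{2/3}.
\end{align*}
For low-width iterations, monotonicity of $\cE$ in its weights gives $\cE(p^{(t+1)}) \ge \cE(p^{(t)})$. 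Summing, after $T_{hi}$ high-width iterations (and any number of low-width ones) we obtain $\cE(p^{(t)}) - \cE(p^{(0)}) \ge 26\, M\, d^{2/3}\cdot T_{hi}'$, where $T_{hi}'$ counts the high-width steps completed by iteration $t$.

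\textbf{Step 2: Upper bound on $\cE(p^{(t)})$ before dual return.} At any iteration $t$ in which the dual return condition does not trigger, we have $\langle s + \tfrac{MC^2}{2}\tfrac{r^{(t)}}{\|r^{(t)}\|_1}, (A\Delta^{(t)})^2\rangle < 13M$. Using that $\Delta^{(t)}$ is the minimizer defining $\cE(p^{(t)})$, that the while-loop condition gives $\|r^{(t)}\|_1 \le 2(\|w\|_1+d)$, and that $\|w\|_1 \le 2d$ for Lewis weight overestimates, we get
\begin{align*}
\cE(p^{(t)})
&= 2(\|w\|_1+d)\langle s,(A\Delta^{(t)})^2\rangle + \tfrac{MC^2}{2}\langle r^{(t)},(A\Delta^{(t)})^2\rangle \\
&\le 2(\|w\|_1+d)\Big(\langle s,(A\Delta^{(t)})^2\rangle + \tfrac{MC^2}{2\|r^{(t)}\|_1}\langle r^{(t)},(A\Delta^{(t)})^2\rangle\Big) \\
&< 2(\|w\|_1+d)\cdot 13M \;\le\; 78\,M\,d.
\end{align*}
The key point is that the while-loop condition aligns the coefficient ratio between the $s$- and $r^{(t)}$-terms inside $p^{(t)}$ with the one in the dual return threshold.

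\textbf{Step 3: Combine.} Let $t^\star$ be the last iteration in which Case 1 was executed. Since Case 1 executed, the dual return check did not fire at $t^\star$, so by Step 2, $\cE(p^{(t^\star)}) < 78\,M\,d$. On the other hand, among iterations $0,\dots,t^\star-1$ there are exactly $T_{hi}-1$ high-width steps, so by Step 1, $\cE(p^{(t^\star)}) \ge 26\,M\,d^{2/3}\,(T_{hi}-1)$. Chaining gives $T_{hi}-1 < 3 d^{1/3}$, hence $T_{hi} = O(d^{1/3})$.

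\textbf{Anticipated obstacle.} The main subtlety is getting the upper bound in Step 2 to match the shape of $p^{(t)}$: one must exploit that the coefficient $2(\|w\|_1+d)$ multiplying $s$ in $p^{(t)}$ is at least $\|r^{(t)}\|_1$, so that the dual-return inequality (which mixes $s$ and $r^{(t)}/\|r^{(t)}\|_1$) can be scaled into an inequality bounding $\langle p^{(t)},(A\Delta^{(t)})^2\rangle = \cE(p^{(t)})$ from above. Once this scaling is correctly done, the rest is a direct analogue of the regression-side argument, with the Lewis-weight energy bound in Lemma \ref{lem:energy-lewis-weight} doing the heavy lifting.
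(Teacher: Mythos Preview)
Your proposal is correct and follows essentially the same approach as the paper: lower-bound $\cE(p^{(t)})$ by $26Md^{2/3}$ per high-width step via Lemma~\ref{lem:energy-lewis-weight}, upper-bound $\cE(p^{(t)})$ by $78Md$ using the while-loop bound $\|r^{(t)}\|_1 \le 2(\|w\|_1+d) \le 6d$ together with the unfired dual-return condition, and combine. Your Step~2 derivation is just an unwound version of the paper's homogeneity/monotonicity argument for $\cE$, and your slightly more careful index bookkeeping in Step~3 (working at the last high-width iteration $t^\star$) only costs an inconsequential $-1$ in the final bound.
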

\begin{proof}
Again, for $t$ such that $\left\Vert A\Delta^{(t)}\right\Vert _{\infty}>S$,
by Lemma \ref{lem:energy-lewis-weight}, we have
\begin{align*}
\frac{\cE\left(p^{(t+1)}\right)-\cE\left(p^{(t)}\right)}{\left\Vert r^{(t+1)}\right\Vert _{1}-\left\Vert r^{(t)}\right\Vert _{1}} & \ge26Md^{\frac{2}{3}}.
\end{align*}
Since $\left\Vert r^{(t+1)}\right\Vert _{1}-\left\Vert r^{(t)}\right\Vert _{1}=1$,
we get 
\begin{align*}
\cE\left(p^{(t+1)}\right)-\cE\left(p^{(t)}\right) & \ge26Md^{\frac{2}{3}}
\end{align*}
Therefore 
\begin{align*}
\cE\left(2\left(\left\Vert w\right\Vert _{1}+d\right)s+\frac{MC^{2}}{2}r^{(T)}\right) & \ge26Md^{\frac{2}{3}}T_{hi}
\end{align*}
Notice that $\left\Vert r^{(T)}\right\Vert _{1}\le2\left(\left\Vert w\right\Vert _{1}+d\right)\le6d$
and $\cE\left(s+\frac{MC^{2}}{2}\frac{r^{(T)}}{\left\Vert r^{(T)}\right\Vert _{1}}\right)\le13M\left\Vert r^{(T)}\right\Vert _{1}$
\begin{align*}
\cE\left(2\left(\left\Vert w\right\Vert _{1}+d\right)s+\frac{MC^{2}}{2}r^{(T)}\right) & =2\left(\left\Vert w\right\Vert _{1}+d\right)\cE\left(s+\frac{MC^{2}}{2}\frac{r^{(T)}}{2\left(\left\Vert w\right\Vert _{1}+d\right)}\right)\\
 & \le6d\cE\left(s+\frac{MC^{2}}{2}\frac{r^{(T)}}{\left\Vert r^{(T)}\right\Vert _{1}}\right)\\
 & \le78dM.
\end{align*}
We obtain $T_{hi}\le3d^{1/3}.$
\end{proof}

\begin{lem}
$T_{lo}\le O\left(d^{\frac{1}{3}}\log\left(n\right)\right).$
\end{lem}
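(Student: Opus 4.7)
The plan is to adapt the argument used in Lemma B.3 (which bounds $T_{lo}$ for the $\ell_{\infty}$-regression subroutine) to the residual solver setting. The essential difference here is that we only need a constant-factor approximation in place of a $(1+\epsilon)$-approximation, so the $1/\epsilon^{2}$ term that appeared in Lemma B.3 collapses into a constant and only the $d^{1/3}\log n$ term survives.

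First, I would set up a contradiction. Suppose the algorithm does not return during the first $T_{lo}$ low-width iterations. Then the Case 2 return check must have failed at every such iteration, so the average direction $v^{(t')}/t'=\bigl(\sum_{t\in T_{lo}}\Delta^{(t)}\bigr)/T_{lo}$ satisfies $\|A v^{(t')}/t'\|_{\infty}>11/C$, which yields a coordinate $i$ with $\sum_{t\in T_{lo}}|(A\Delta^{(t)})_{i}|>11\,T_{lo}/C$. Next I would split the low-width iterations by whether the update-trigger $(A\Delta^{(t)})_{i}^{2}\ge100/C^{2}$ fires. On non-triggering iterations $|(A\Delta^{(t)})_{i}|<10/C$, so these contribute at most $10T_{lo}/C$ in total; subtracting, the triggering iterations must satisfy $\sum|(A\Delta^{(t)})_{i}|>T_{lo}/C$. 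Writing $\alpha_{i}^{(t)}=r_{i}^{(t+1)}/r_{i}^{(t)}=(A\Delta^{(t)})_{i}^{2}C^{2}/52$ on the triggering set, this rearranges to a lower bound $\sum\sqrt{\alpha_{i}^{(t)}}=\Omega(T_{lo})$, where each $\sqrt{\alpha_{i}^{(t)}}$ lies in the range $[\Theta(1),\,O(d^{1/3})]$ by the definitions of the trigger threshold and the cutoff $S=11d^{1/3}/C$.

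Now I would invoke Lemma A.1 of \cite{ene2019improved} (used identically in the proof of Lemma B.3), which converts a sum lower bound of this form, together with an upper envelope of $O(d^{1/3})$, into a multiplicative lower bound $\prod_{t:\alpha_{i}^{(t)}>1}\alpha_{i}^{(t)}\ge d^{\,\Omega(T_{lo}/d^{1/3})}$. On the other hand, the product telescopes to $r_{i}^{(T)}/r_{i}^{(0)}$; using $r_{i}^{(0)}\ge d/n$ from the initialization and the invariant $\|r^{(t)}\|_{1}\le 2(\|w\|_{1}+d)\le 6d$ maintained by the while-loop guard, I get $\prod\alpha_{i}^{(t)}\le r_{i}^{(T)}/r_{i}^{(0)}\le 6n$. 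Taking logarithms and solving for $T_{lo}$ gives $T_{lo}\le O\bigl(d^{1/3}\log n/\log d\bigr)=O(d^{1/3}\log n)$, which is the claimed bound.

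The main obstacle I anticipate is bookkeeping the constants so that, even without an $\epsilon$ slack, the non-triggering contribution of $10/C$ per step is strictly dominated by the termination-failure lower bound $11/C$, leaving a $\Theta(1/C)$ margin for the triggering iterations; this margin is what drives the $\sum\sqrt{\alpha_{i}^{(t)}}=\Omega(T_{lo})$ step. Once the constants $100$, $52$, and $11$ in Algorithm \ref{alg:residual-solver} are checked to be compatible (they are chosen precisely so the gap $11/C-10/C=1/C$ survives), the [EV19] inequality plugs in as a black box and the rest is a short calculation mirroring Lemma B.3.
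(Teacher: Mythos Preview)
Your proposal is correct and follows essentially the same approach as the paper's proof: the paper likewise extracts a coordinate $i$ from the failure of the averaging return check, splits according to whether the update trigger fires to obtain $\sum_{t\in T_{lo},\,\alpha_i^{(t)}>1}\sqrt{\alpha_i^{(t)}}\ge T_{lo}/\sqrt{52}$ with $\sqrt{\alpha_i^{(t)}}\in[10/\sqrt{52},\,11d^{1/3}/\sqrt{52}]$, and then defers to the same Lemma~A.1 of \cite{ene2019improved} together with the bound $r_i^{(T)}/r_i^{(0)}\le O(n)$ coming from $r_i^{(0)}\ge d/n$ and $\|r^{(T)}\|_1\le 6d$.
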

\begin{proof}
Let $\alpha_{j}^{(t)}=\frac{r_{j}^{(t+1)}}{r_{j}^{(t)}}$. We have
\begin{align*}
\left\Vert A\frac{\sum_{t\in T_{lo}}\Delta^{(t)}}{T_{lo}}\right\Vert _{\infty} & \ge\frac{11}{C}
\end{align*}
We obtain that there exists a coordinate $i$ such that 
\begin{align*}
\sum_{t\in T_{lo}}\left(A\Delta^{(t)}\right)_{i}C & \ge11T_{lo}
\end{align*}
We have if $\left(A\Delta^{(t)}\right)_{i}^{2}C^{2}<100$, $\left(A\Delta^{(t)}\right)_{i}C<10$,
and if $\left(A\Delta^{(t)}\right)_{i}^{2}C^{2}\ge100$, $\left(A\Delta^{(t)}\right)_{i}^{2}C^{2}=52\alpha_{i}^{(t)}$.
Hence, 
\begin{align*}
\left(A\Delta^{(t)}\right)_{i}C & \le10+\one_{\alpha_{i}^{(t)}>1}\sqrt{52\alpha_{i}^{(t)}}
\end{align*}
We obtain 
\begin{align*}
\sum_{t\in T_{lo},\alpha_{i}^{(t)}>1}\sqrt{\alpha_{i}^{(t)}} & \ge\frac{T_{lo}}{\sqrt{52}}
\end{align*}
For $t$ such that $\alpha_{i}^{(t)}>1$, we have
\begin{align*}
\sqrt{\alpha_{i}^{(t)}} & =\frac{1}{\sqrt{52}}\left(A\Delta^{(t)}\right)_{i}C\in\left[\frac{10}{\sqrt{52}},\frac{11d^{\frac{1}{3}}}{\sqrt{52}}\right]
\end{align*}
Similarly to Lemma \ref{lem:Linf-low-width-iter}, we obtain 
\begin{align*}
T_{lo} & \le O\left(d^{\frac{1}{3}}\log\left(n\right)\right).
\end{align*}
\end{proof}

Lemma \ref{lem:residual-solver-runtime} now follows from the above
lemmas.

\section{QSC Algorithm for the Underdetermined Case}

In the underdetermined case $n\le d$, instead of using Lewis weights
in the residual solver, we can simply return to the algorithm by \cite{ene2019improved}
and use a uniform initialization of the resistances. We provide the
algorithm in Algorithm \ref{alg:residual-solver-1}. The analysis
follows similarly (which we omit). The number of linear system solves
is $O(n^{1/3}\log n)$.

\begin{algorithm}[th]
\caption{$\protect\ressolver(x,M)$ for $n\le d$}

\label{alg:residual-solver-1}

\begin{algorithmic}[1]

\STATE\textbf{Initialize}: $r^{(0)}=1$

\STATE$g_{i}=\frac{-1}{M}(A^{\top}\nabla f(x))_{i}$, $s_{i}=f''((Ax)_{i})$,
$t=0$, $t'=0$, $v^{(t')}=0$

\STATE\textbf{while} $\|r^{(t)}\|_{1}\leq2n$

\STATE$\qquad$Let $p^{(t)}=2ns+\frac{MC^{2}}{2}r^{(t)}$; $\Delta^{(t)}=\arg\min_{\Delta:g^{\top}\Delta=-1}\left\langle p^{(t)},(A\Delta)^{2}\right\rangle $

\STATE$\qquad$\textbf{if} $\left\langle s+\frac{MC^{2}}{2}\frac{r^{(t)}}{\left\Vert r^{(t)}\right\Vert _{1}},(A\Delta^{(t)})^{2}\right\rangle \ge13M$
\textbf{then} \textbf{return} $r^{(t)}$

\STATE$\qquad$\textbf{if} $\|A\Delta^{(t)}\|_{\infty}\le\frac{11}{C}$
\textbf{then} \textbf{return} $\Delta^{(t)}$

\STATE$\qquad$\textbf{if} $\|A\Delta^{(t)}\|_{\infty}\le S=\frac{11n^{\frac{1}{3}}}{C}$:

\STATE$\qquad\qquad$Let $t'=t'+1$; $v^{(t')}=v^{(t'-1)}+\Delta^{(t)}$

\STATE$\qquad\qquad$\textbf{if} $\|Av^{(t')}\|_{\infty}/t'\le\frac{11}{C}$
\textbf{then} \textbf{return} $v^{(t')}/t'$

\STATE$\qquad\qquad$$r_{j}^{(t+1)}=\begin{cases}
\frac{1}{52}r_{j}^{(t)}(A\Delta^{(t)})_{j}^{2}C^{2} & \text{if }(A\Delta^{(t)})_{j}^{2}\ge\frac{100}{C^{2}}\\
r_{j}^{(t)} & \mbox{otherwise}
\end{cases}$

\STATE$\qquad$$t=t+1$

\STATE\textbf{return} $r^{(t)}$

\end{algorithmic}
\end{algorithm}

\section{Approximating the Lewis Weights\protect\label{sec:approx-lewis}}

Here we review the fixed point iteration by \cite{cohen2019near}
for computing approximate $\ell_{\infty}$ Lewis weights. While the
algorithm is known to provide only a one-sided approximation, this
guarantee suffices for our application.
\begin{thm}
On input $A\in\mathbb{R}^{n\times d}$, the algorithm $\mathsf{ApproxLewis}(A)$
returns w.h.p. $\ell_{\infty}$ Lewis weights overestimates $w\in\mathbb{R}^{n}$
in the sense that
\begin{align*}
w_{i} & \geq1_{i}^{\top}W^{1/2}A\left(A^{\top}WA\right)^{-1}A^{\top}W^{1/2}1_{i}\,,\\
d & \leq\left\Vert w\right\Vert _{1}\leq2d\,,
\end{align*}
in time $O\left(\log n\cdot\mathcal{T}_{A}\right)$, where $\mathcal{T}_{A}$
is the time to solve a linear system involving a matrix $A^{\top}DA$,
where $D$ is a positive diagonal.
\end{thm}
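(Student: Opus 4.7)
The plan is to follow the Cohen-Peng fixed-point iteration, specialized to the case $p=\infty$, and combine it with Johnson-Lindenstrauss sketching to compute leverage scores approximately. Recall that the defining condition for (exact) $\ell_\infty$ Lewis weights is $w_i = \sigma_i(W^{1/2} A)$, which naturally suggests the iteration
\begin{equation*}
w^{(t+1)}_i \;=\; c \cdot \widetilde{\sigma}_i\bigl((W^{(t)})^{1/2} A\bigr),
\end{equation*}
where $\widetilde{\sigma}_i$ is a constant-factor multiplicative approximation to the leverage score and $c>1$ is a small inflation constant chosen so that the overestimate property is never broken by the approximation error of the sketch. Initialization can be taken as $w^{(0)}_i = d/n$, or any uniform choice, since the update only depends on the ratios of the weights through $W^{1/2} A$.

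The key structural facts I would use are: (i) for any matrix $B$ with $d$ columns, $\sum_i \sigma_i(B) = \mathrm{rank}(B) \le d$, which immediately gives $\|w^{(t+1)}\|_1 \le c\,d$, providing the upper bound $\|w\|_1 \le 2d$ once $c$ is set to a value slightly greater than $1$; (ii) the map $w \mapsto \sigma(W^{1/2} A)$ is contractive in the right metric (e.g.\ in $\ell_\infty$ on $\log w$) for $p=\infty$, which is the property Cohen-Peng exploit; and (iii) approximate leverage scores of $W^{1/2} A$ can be computed, up to a constant multiplicative factor with high probability, by applying a $O(\log n) \times n$ Johnson-Lindenstrauss matrix $G$ to $W^{1/2} A (A^\top W A)^{-1} A^\top W^{1/2}$ and reading off row norms; this is precisely $O(\log n)$ calls to a linear system solver involving $A^\top W A$, i.e.\ $O(\log n \cdot \mathcal{T}_A)$ work per outer iteration.

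For the outer loop, I would show that a constant number of iterations suffices to drive the iterate into the overestimate region, using the contraction of the map together with the one-sided inflation by $c$. The telescoping of the contraction bound gives that after $O(1)$ rounds one has $w_i \ge \sigma_i((W)^{1/2} A)$ coordinatewise with high probability, while the sum stays in $[d, 2d]$ because the lower bound is forced by adding a lazy step that rescales $w$ by a factor in $[1, 2]$ when $\|w\|_1 < d$ (this rescaling cannot break the overestimate, only improve it). Across the outer iterations and the JL cost per iteration, the total work is $O(\log n \cdot \mathcal{T}_A)$, matching the claimed runtime.

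The main obstacle is handling the interaction between the JL approximation and the overestimate guarantee: a naive plug-in would only give a two-sided approximation and could violate $w_i \ge \sigma_i(W^{1/2}A)$. The remedy is to use the one-sided JL bound at a slightly tighter confidence level, union-bounded over the $n$ rows and the $O(1)$ outer iterations, and to set $c$ larger than the worst-case multiplicative under-approximation factor of the sketch. With these choices, the invariants $w \ge \sigma(W^{1/2}A)$ and $\|w\|_1 \in [d, 2d]$ are both preserved at termination, completing the proof.
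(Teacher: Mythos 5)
Your proposal parts ways with the paper on the central mechanism. First, note that the paper does not prove this theorem; it reproduces Algorithm~\ref{alg:approx-lewis} from \cite{cohen2019near} and defers the proof there. That algorithm runs $T=10\log n$ iterations of the fixed-point map $w^{(k+1)}_i = \widetilde\sigma_i\bigl((W^{(k)})^{1/2}A\bigr)$ and returns the \emph{average} $w_i = \frac{1}{T}\sum_{k=1}^{T} w_i^{(k)}$. That averaging is the key device behind the one-sided overestimate guarantee, and it is what makes the $\log n$ iteration count essential rather than incidental. Your proposed algorithm — a constant number of iterations, an explicit inflation constant $c$, and a rescaling pass — is not the algorithm the theorem is about, so even if your argument closed, it would establish a different statement.

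The more fundamental issue is your claim (ii), that the map $w\mapsto\sigma(W^{1/2}A)$ is a contraction in $\ell_\infty$ on $\log w$ for $p=\infty$ ``which is the property Cohen-Peng exploit.'' That contraction result, from Cohen--Peng's $\ell_p$ Lewis weight paper, has factor $|1-2/p|$ and therefore only gives a strict contraction for $p<4$; at $p=\infty$ the factor degenerates to $1$ and the argument provides no convergence rate. This is precisely why \cite{cohen2019near} had to develop a different analysis for the $\ell_\infty$ / John ellipsoid case, based on a potential-function argument together with averaging of the iterates. Your telescoping-contraction step therefore does not go through. Relatedly, even granting a hypothetical contraction with factor bounded away from $1$, starting from the uniform initialization $w^{(1)}_i = d/n$ the individual coordinates can be off from the true Lewis weights by a factor of $\Theta(n)$, so $O(1)$ iterations would at best shrink $\|\log w - \log w^\star\|_\infty$ by a constant, not bring it to $O(1)$; you would need at least $\Theta(\log\log n)$ rounds, and the paper's algorithm uses $\Theta(\log n)$. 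To repair the proposal you would need to replace the contraction argument with the averaging/potential argument of \cite{cohen2019near} (or at least cite it), and analyze the algorithm as actually stated — with the running average rather than with inflation and rescaling.
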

\begin{algorithm}[th]
\caption{Approximate $\ell_{\infty}$ Lewis weights $\mathsf{ApproxLewis}(A)$}

\label{alg:approx-lewis}

\begin{algorithmic}[1]

\STATE\textbf{Input}:A symmetric polytope given by $-1_{n}\leq Ax\leq1_{n}$,
where $A\in\mathbb{R}^{n\times d}$

\STATE\textbf{Output}: Approximate $\ell_{\infty}$ Lewis weights
$w$ such that $w_{\text{true}}\leq w$ and $\sum w\leq d$

\STATE\textbf{Initialize}: $w_{i}^{\left(1\right)}=\frac{d}{n}$,
for $i=1,\dots,n$. $T=10\log n$.

\STATE\textbf{for} $k=1,\dots,T-1$

\STATE$\qquad W^{\left(k\right)}=\text{diag}\left(w^{\left(k\right)}\right)$.

\STATE$\qquad B^{\left(k\right)}=\sqrt{W^{\left(k\right)}}A$.

\STATE$\qquad$Let $S^{\left(k\right)}\in\mathbb{R}^{s\times n}$
be a random matrix where each entry is chosen i.i.d. from $N\left(0,1\right)$,
i.e. the standard normal distribution.

\STATE$\qquad$\textbf{for} $i=1,\dots,n$

\STATE$\qquad$$\qquad$$w_{i}^{\left(k+1\right)}=\frac{1}{s}\left\Vert S^{\left(k\right)}B^{\left(k\right)}\left(B^{\left(k\right)\top}B^{\left(k\right)}\right)^{-1}\left(\sqrt{w_{i}^{\left(k\right)}}a_{i}\right)\right\Vert _{2}^{2}$.

\STATE$w_{i}=\frac{1}{T}\sum_{k=1}^{T}w_{i}^{\left(k\right)}$ for
$i=1,\dots,m$.

\end{algorithmic}
\end{algorithm}

\section{Handling Affine Constraints\protect\label{sec:Handling-Affine-Constraints}}

In this section we show that the problem we solve is in full generality,
in the sense that even in the presence of affine constraints we can
still minimize objectives of the type \ref{eq:QSC-problem} without
significant overheads. Formally, given an objective of the form
\[
\min_{Nx=v}\sum_{i=1}^{n}f\left(\left(Ax-b\right)_{i}\right)
\]
where $A\in\mathbb{R}^{n\times d}$, and $N\in\mathbb{R}^{m\times d}$,
with $m<d$, we can minimize it to high precision using the algorithms
from Section \ref{sec:Generalization-to-QSC} with minimal changes.
Indeed, we can observe that, assuming the existence of an appropriate
residual solver, Algorithm \ref{alg:QSC-main} is completely unaffected
by this subspace constraint. Hence the only difficulty is posed by
solving the residual problem described in Algorithm \ref{alg:residual-solver}
while additionally enforcing the subspace constraint. Recall that
this problem, in its most general form, takes as input a matrix $A$,
as well as weight vectors $s,u$, and seeks an approximate minimizer
for
\[
\min_{\substack{x:\left\langle u,Ax\right\rangle =-1\\
Nx=0
}
}\left\langle s,\left(Ax\right)^{2}\right\rangle \,.
\]
To handle the affine constraint $Nx=0$, we can convert this objective
into an unconstrained problem via a change of variable. Indeed, let
$x_{0}$ be a point satisfying $Nx_{0}=v$. Then any $x$ in the affine
space can be expressed as 
\[
x=x_{0}+By\,,
\]
where $B\in\mathbb{R}^{d\times\dim\ker\left(C\right)}$ and $\text{im}\left(B\right)=\ker\left(N\right)$.
In other words, the columns of $B$ form a basis for the null space
of $N$. Thus our problem is equivalent to 
\[
\min_{\substack{y:\left\langle u,ABy\right\rangle =-1}
}\left\langle s,\left(ABy\right)^{2}\right\rangle \,,
\]
and takes exactly the form required for the $\ell_{\infty}$ regression
routine from Algorithm \ref{alg:residual-solver}, which leaves us
with an optimization problem over a lower dimensional space $y\in\mathbb{R}^{\dim\ker\left(N\right)}$.
Unfortunately, constructing the matrix $B$ explicitly may be costly,
so we want to avoid doing so. Instead, we claim that the regression
algorithm can be directly executed using solvers for matrices of the
type $A^{\top}DA$ and $N\left(A^{\top}DA\right)^{+}N^{\top}$ , where
$D$ is a positive diagonal.

The key difficulty lies in computing the least squares step
\begin{align}
x^{\left(t\right)} & =By^{\left(t\right)}\,,\label{eq:constrlsx}\\
y^{\left(t\right)} & =\arg\min_{y:\left\langle u,ABy\right\rangle =-1}\left\langle p^{(t)},\left(ABy\right)^{2}\right\rangle \,.\label{eq:costrlsy}
\end{align}
without explicitly accessing $B$. 
\begin{lem}
The solution for the least squares problem defined in (\ref{eq:constrlsx})
(\ref{eq:costrlsy}) can be explicitly computed as
\begin{align*}
x^{\left(t\right)} & =-\frac{1}{u^{\top}A\Delta}\cdot\Delta,\\
\Delta & =\left(A^{\top}PA\right)^{+}\left(-N^{\top}\left(N\left(A^{\top}PA\right)^{+}N^{\top}\right)^{+}N\left(A^{\top}PA\right)^{+}A^{\top}u+A^{\top}u\right)\,,
\end{align*}
where $P=\textbf{diag}\left(p^{\left(t\right)}\right)$
\end{lem}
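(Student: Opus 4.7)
After the change of variables $x = By$ with $\mathrm{im}(B) = \ker(N)$, the problem in \eqref{eq:constrlsx}--\eqref{eq:costrlsy} is equivalent to the linearly constrained quadratic
\[
\min_{x:\ Nx = 0,\ u^\top A x = -1}\ x^\top (A^\top P A)\, x.
\]
The plan is to solve this directly via KKT/Lagrangian stationarity on the variable $x$, so that the orthonormal basis matrix $B$ never needs to be instantiated. Introducing a scalar multiplier $\lambda$ for the normalization constraint and a vector multiplier $\mu\in\R^m$ for $Nx=0$, stationarity of the Lagrangian reads
\[
2(A^\top P A)\, x + \lambda A^\top u + N^\top \mu = 0,
\]
so that $x = -\tfrac12 (A^\top P A)^+ (\lambda A^\top u + N^\top \mu)$.

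The second step is to eliminate $\mu$ by enforcing $Nx = 0$. This gives the linear equation $\lambda N (A^\top P A)^+ A^\top u + N (A^\top P A)^+ N^\top \mu = 0$, whose solution is $\mu = -\lambda \bigl(N (A^\top P A)^+ N^\top\bigr)^+ N (A^\top P A)^+ A^\top u$. Plugging this back shows that $x$ is a scalar multiple of the vector $\Delta$ stated in the lemma. Finally, enforcing the normalization $u^\top A x = -1$ pins down the scalar as $\lambda = 2/(u^\top A \Delta)$, which yields the claimed closed form $x^{(t)} = -\Delta/(u^\top A \Delta)$. A key observation is that evaluating $\Delta$ requires only one solve with $A^\top P A$, one with $N(A^\top P A)^+ N^\top$, and a final solve with $A^\top P A$, matching precisely the oracle assumption of Theorems~\ref{thm:QSC-main-theorem} and~\ref{thm:Linf-regression}; moreover $u^\top A \Delta$ is a single inner product, so no additional structured solves are incurred.

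The main obstacle is justifying the pseudo-inverse manipulations when $A^\top P A$ is singular, which can happen because $P$ carries some zero or negligible entries and the subspace $\ker(N)$ further restricts the effective variable space. Concretely, one must check that $A^\top u$ and $N^\top \mu$ lie in $\mathrm{im}(A^\top P A)$ modulo the feasible subspace, so that the stationarity equation has a genuine solution in $\ker(N)$, and that the composed expression $(A^\top P A)^+ N^\top \bigl(N(A^\top P A)^+ N^\top\bigr)^+ N (A^\top P A)^+$ acts as the correct oblique projection. The cleanest route is to invoke the standard block closed-form inverse for the symmetric KKT saddle-point system with blocks $A^\top P A$ and $N$, from which $\Delta$ reads off directly; alternatively, under the standing feasibility assumption (which is maintained throughout Algorithm~\ref{alg:residual-solver}) the required range conditions can be verified by a direct rank argument. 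Once this technicality is dispatched, the remainder of the proof is a routine substitution.
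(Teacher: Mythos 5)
Your proposal is correct and takes essentially the same approach as the paper. The paper starts from the closed-form normal equations for the reduced $y$-variable problem, $B^{\top}A^{\top}PAB\,z = B^{\top}A^{\top}u$, and observes that this is equivalent to the residual $A^{\top}PA\cdot Bz - A^{\top}u$ lying in $\ker(B^{\top}) = \mathrm{im}(N^{\top})$, introducing the vector $r$ with $w = N^{\top}r$; you instead never leave the $x$-variables and introduce the Lagrange multiplier $\mu$ for $Nx=0$ directly. These are the same KKT argument — the paper's $r$ is your $\mu$ up to a scalar — and both eliminate the multiplier via the Schur-complement system $N(A^{\top}PA)^{+}N^{\top}$, then fix the scale with the normalization constraint. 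Your additional remark about the pseudoinverse range conditions when $A^{\top}PA$ is singular is a point the paper glosses over, though in the algorithm's actual setting $P$ is a strictly positive diagonal, so the issue is benign when $A$ has full column rank.
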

\begin{proof}
We notice that the solution to this least squares problem has solution
\[
y^{\left(t\right)}=-\frac{1}{u^{\top}AB\left(B^{\top}A^{\top}PAB\right)^{+}B^{\top}A^{\top}u}\cdot\left(B^{\top}A^{\top}PAB\right)^{+}B^{\top}A^{\top}u\,,
\]
and equivalently $y^{\left(t\right)}$ satisfies $y^{\left(t\right)}=-\frac{1}{u^{\top}ABz}\cdot z$,
where
\begin{align*}
B^{\top}A^{\top}PABz & =B^{\top}A^{\top}u\,.
\end{align*}
Therefore there exists some $w=N^{\top}r$ such that:
\[
A^{\top}PA\cdot Bz-A^{\top}u=w\in\ker\left(B^{\top}\right)=\text{im}\left(B\right)^{\perp}=\ker\left(N\right)^{\perp}=\text{im\ensuremath{\left(N^{\top}\right)\,},}
\]
and thus we can write
\[
Bz=\left(A^{\top}PA\right)^{+}\left(w+A^{\top}u\right)\,,
\]
wich implies that
\[
N\left(A^{\top}PA\right)^{+}\left(w+A^{\top}u\right)=0\,,
\]
and hence
\[
N\left(A^{\top}PA\right)^{+}\left(N^{\top}r+A^{\top}u\right)=0\,.
\]
This allows us to explicitly express
\begin{align*}
r & =-\left(N\left(A^{\top}PA\right)^{+}N^{\top}\right)^{+}N\left(A^{\top}PA\right)^{+}A^{\top}u\,,\\
w & =-N^{\top}\left(N\left(A^{\top}PA\right)^{+}N^{\top}\right)^{+}N\left(A^{\top}PA\right)^{+}A^{\top}u\,,
\end{align*}
which finally yields 
\[
Bz=\left(A^{\top}PA\right)^{+}\left(-N^{\top}\left(N\left(A^{\top}PA\right)^{+}N^{\top}\right)^{+}N\left(A^{\top}PA\right)^{+}A^{\top}u+A^{\top}u\right)
\]
and 
\[
x^{\left(t\right)}=By^{\left(t\right)}=y^{\left(t\right)}=-\frac{1}{u^{\top}ABz}\cdot z\,.
\]
\end{proof}
Therefore the entire algorithm can be carried out as in the unconstrained
case.

\subsection{Lewis Weights in the Affine-constrained Setting}

In addition to being able to properly execute the least squared steps
in Algorithm \ref{alg:residual-solver}, one also requires a proper
initialization of weights. While in general, there is no direct notion
of Lewis weights in the case where affine subspace constraints are
included, we can apply the same reparametrization idea. After reparametrizing
the null space of $N$ in terms of the image of a matrix $B$, we
obtain an unconstrained problem involving the matrix $AB$, which
is the matrix for which we need to compute the $\ell_{\infty}$ Lewis
weight overestimates we use at initialization. Again, we argue this
can be done without explicit access to $B$. Note that the fixed point
iteration algorithm of \cite{cohen2019near} only requires computing
leverage scores of the underlying matrix $1_{i}^{\top}AB\left(B^{\top}A^{\top}PAB\right)^{+}B^{\top}A^{\top}1_{i}$.
Since, for increased efficiency, the algorithm also uses Johnson-Lindenstrauss
sketching, we further require being able to evaluate bilinear forms
of the type
\[
w^{\top}AB\left(B^{\top}A^{\top}PAB\right)^{+}B^{\top}A^{\top}u\,,
\]
where $P$ is a positive diagonal. As we saw before, we can evaluate
\begin{align*}
 & B\left(B^{\top}A^{\top}PAB\right)^{+}B^{\top}A^{\top}u\\
= & \left(A^{\top}PA\right)^{+}\left(-N^{\top}\left(N\left(A^{\top}PA\right)^{+}N^{\top}\right)^{+}N\left(A^{\top}PA\right)^{+}A^{\top}u+A^{\top}u\right)\,,
\end{align*}
which yields the expression for the required bilinear form, so it
can be directly applied inside the Lewis weights estimation algorithm. 

\end{document}